\documentclass[11pt]{article}
\usepackage{amsmath}
\usepackage{mathtools}
\usepackage{amsthm}
\usepackage{amsfonts}
\usepackage{amssymb}
\usepackage{hyperref}
\usepackage{graphicx}
\usepackage{enumitem}
\usepackage{float}
\usepackage[normalem]{ulem}
\usepackage{tcolorbox}
\usepackage[style=alphabetic]{biblatex}
\usepackage{tikz-cd}
\usepackage{caption}
\usepackage{subcaption}

\usepackage[margin=1.5in]{geometry}

\newtheorem{thm}{Theorem}[section]
\newtheorem{cor}[thm]{Corollary}
\newtheorem{defn}[thm]{Definition}
\newtheorem{notation}[thm]{Notation}
\newtheorem{lemma}[thm]{Lemma}
\newtheorem{prop}[thm]{Proposition}
\newtheorem{conjecture}[thm]{Conjecture}

\newcommand{\R}{\mathbb{R}}
\newcommand{\T}{\mathbb{T}}
\newcommand{\Z}{\mathbb{Z}}
\newcommand{\Q}{\mathbb{Q}}

\newcommand{\N}{\mathbb{N}}

\newcommand{\supp}{\text{supp}}
\newcommand{\Leb}{\text{Leb}}

\begin{document}

\author{Margaret Brown}
\title{Expanding Maps on Flowers, Interval Exchange Transformations, and Ergodic Optimization}
\date{}
\maketitle

\begin{abstract}
    In this paper, we discuss expanding maps on a class of invariant sets called flowers. We show that any set contained in a flower has at most linear complexity, and we present a relationship between flowers and a special class of interval exchange transformations. This extends work of Bullett and Sentenac, who showed that any Sturmian system may be embedded into the circle as a doubling-invariant subset that is contained in a half circle.

    Flowers were first introduced in the context of ergodic optimization, as candidate sets for supporting maximizing measures. We discuss the relationship to ergodic optimization, and present numerical results that support the conjecture that trigonometric polynomials are maximized on flowers.
\end{abstract}

\section{Introduction}\label{sec:introduction}

Sturmian subshifts appear in many problems in dynamics, in particular in ergodic optimization, and they provide an initial motivation for the work in this paper. Let $(X_2^+,\sigma)$ be the one-sided shift on two symbols. A Sturmian subshift is a closed $\sigma$ invariant set $X\subset X_2^+$ that is recurrent and whose word complexity $C(n)$ is bounded above by $n+1$. Such shifts have been studied extensively in symbolic dynamics, for example in \cite{lothaire}, \cite{pyth}. Note that in the literature, Sturmians are often defined to be infinite systems that have complexity exactly equal to $n+1$, which was shown in \cite{morsehedlund} to be the least possible complexity for an infinite sequence. However we want to allow periodic subsystems as well, hence why we allow complexity to be simply bounded above by $n+1$.

It is known that Sturmian subshifts are uniquely ergodic. Let $\mu$ be the unique ergodic measure of a given Sturmian subshift, and define the number $\rho:=\mu(C_1)$ where $C_1$ is the cylinder set fixing $1$ in the first position, that is \[C_1=\Big\{\{1x_1x_2x_3...\} | x_i\in \{0,1\} \text{ for } i\geq 1 \Big\}.\]
Then, the subshift is a symbolic coding of the rotation $x\mapsto x+\rho$ under the partition $[0,1-\rho),[1-\rho,1)$. If $\rho \in \Q$, the Sturmian system is periodic and hence finite, otherwise it is infinite. Sturmian subshifts are a one-parameter family parametrized by the rotation number $\rho$.

In their paper \cite{bullet},  Bullett and Sentenac prove the following characterization of Sturmians. Let $E_2:\T \to \T$ be the doubling map $E_2(x)=2x \mod 1$ on the circle $\T=\R/\Z$. An \emph{invariant set} of the doubling map is a set $X$ such that $E_2(X)=X$. Bullett and Sentenac showed that a closed invariant set $X \subset \T$ is contained in a half circle $[c,c+\frac{1}{2}]$ for some $c$ if and only if the set is an embedding of a Sturmian subshift. They additionally show that invariant sets that are contained in a half circle are exactly the invariant sets for which cyclic order is preserved under $E_2$. The preservation of cyclic order allows for an alternative definition of the rotation number $\rho$ that parametrizes the Sturmian (see Lemma 1 and Proposition 1 in \cite{bullet}). 

Furthermore, the authors specify that the relation between the rotation number $\rho$ and the endpoint $c$ of the half circle is given by a devil's staircase map (Figure 1 in \cite{bullet}) and provide an algorithm for calculating $c$ given $\rho$. They show that if $\rho$ is irrational, $X$ is a Cantor set, and there is a unique half circle containing it. If $\rho$ is rational, $X$ is a unique periodic orbit, and there are a family of half circles containing it. Additionally, they show that this characterizes all invariant sets contained inside half circles: any half circle contains one unique invariant set under the doubling map.

In this paper, we will extend the work of Bullett and Sentenac by considering expanding maps on a special kind of set called a flower. Note that for a non-empty closed subset $X$ of $\T$ to contain a non-empty closed $E_2$ invariant set, a sufficient condition is that for all $x\in X$, there is at least one preimage of $x$ that falls within $X$, that is $E_2^{-1}(x) \cap X \neq \emptyset$. A half circle certainly satisfies this property. To generalize this, suppose that $X \subset \T$ is a union of finitely many closed non-degenerate intervals and satisfies the following two properties:
\[ X \cup (X+\frac{1}{2})=\T \quad \text{and} \quad X \cap (X+\frac{1}{2})=\partial X.\]
Then every $x\in X$ has at least one preimage under $E_2$ that falls in $X$. Additionally note that there are only finitely many points that have more than one preimage in $X$. Such sets are called \emph{flowers} for the map $E_2$. A similar definition can be made for general expanding orientation preserving maps with degree $d\geq2$. Flowers were defined in \cite{brem05}, \cite{brem06} and \cite{flattening} and were studied in the context of problems related to ergodic optimization. 

Let $E_d:\T \to \T$ be the expanding map $E_d(x)=dx \mod 1$. In this paper, our first result is that an invariant set of $E_d$ that is contained in a flower has at most linear complexity. A closed $E_d$ invariant subset of $\T$ is called a subsystem of $E_d$.

\begin{thm}[Theorem \ref{thm:complexitybound} in the main text]\label{thm:2}
    Any subsystem of $E_d$ that is contained in a flower with $p$ petals has at most linear complexity; more precisely: $C(n) \leq pn+k$ for some constant $k$.
\end{thm}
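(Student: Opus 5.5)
Throughout, for a flower $F$ for $E_d$ I use the analogous conditions $\bigcup_{j=0}^{d-1}(F+\tfrac{j}{d})=\T$, with the translates overlapping only along $\partial F$ (boundary points). The plan is to code points of a subsystem $X\subset F$ symbolically and count words by counting atoms of the refined partition $\bigvee_{i=0}^{n-1}E_d^{-i}\mathcal{P}$, where $\mathcal{P}$ is the standard partition of $\T$ into arcs $[\tfrac{j}{d},\tfrac{j+1}{d})$ (the usual base-$d$ coding of $E_d$). Then $C(n)$ is the number of length-$n$ cylinders meeting $X$. Each such cylinder is an arc of length $d^{-n}$ with a base-$d$ endpoint. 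The goal is to show that at most $pn+k$ of these arcs meet $X$.

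\textbf{Step 1: the inverse branch on the flower.} On $F$, the map $E_d$ restricted to each petal is injective onto its image, because a petal has length at most $1/d$ (the $d$ disjoint translates fill $\T$). Define $g:\T\to F$ by sending $y$ to the unique preimage in $F$, choosing a convention at the finitely many boundary points. Then $g$ is piecewise a contraction $y\mapsto (y+j)/d$. Its discontinuity set is finite: it is contained in $E_d(\partial F)$ together with the points where the branch index $j$ jumps, which come from $\partial F$, and so has cardinality at most $2p$. Every $x\in X$ satisfies $x=g(E_dx)$ except at finitely many boundary ambiguities. Hence $X\subseteq \bigcap_n g^n(\T)\cup(\text{countably many exceptional preimages of }\partial F)$, and the exceptional orbits contribute only boundedly many extra words per length once we fix conventions.

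\textbf{Step 2: counting.} Points of $X$ are determined by their forward orbits, and their digit sequence is read off from the branches of $g$. Concretely, the first $n$ digits of $x$ are determined by which branch of $g$ is used at $E_d^{n-1}x,\dots,x$, and this is determined by which arc of the continuity partition of $g^{n}$ contains $E_d^{n}x$. The discontinuity set of $g^n$ is contained in $D\cup g^{-1}(D)\cup\dots$, but taken in the expanding direction only. It is better to count as follows: the set $g^n(\T)$ is a union of arcs whose number of components grows by at most $p$ at each step. Each application of $g$ cuts the current image at the at most $p$ points of $\partial F$ (up to the $+\tfrac{j}{d}$ identification) and adds no other breaks. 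So $g^n(\T)$ consists of at most $pn+k_0$ arcs, each contained in a single $n$-cylinder of the base-$d$ coding, since on each arc all $n$ branch choices are constant. Therefore the number of $n$-words is at most $pn+k_0$, plus a bounded correction from boundary conventions. Absorbing everything into $k$ gives $C(n)\le pn+k$.

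\textbf{Main obstacle.} The delicate step is the claim in Step 2 that passing from $g^{n}(\T)$ to $g^{n+1}(\T)$ creates at most $p$ new words, not $p$ new cut points per existing arc. I will argue it via the Rauzy-graph viewpoint. A word $w$ of length $n$ has more than one right extension in $X$ only if its cylinder arc contains a point whose next digit changes there. That happens only when $E_d^{n}$ of the arc meets a boundary point of a petal. Moreover, each of the $2p$ petal endpoints lies in exactly one $n$-cylinder, so at most $p$ words are right special (each endpoint pair giving at most one extra extension, because the translates overlap only along $\partial F$). Then $C(n+1)-C(n)\le p$, and summing yields $C(n)\le pn+C(0)$ up to finitely many boundary adjustments. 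Verifying that a right-special word really forces its cylinder's image to straddle a petal boundary, using $X\cap(X+\tfrac{j}{d})\subseteq\partial F$, is where I expect the care to be needed.
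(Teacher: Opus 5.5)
\textbf{There is a genuine gap, at exactly the step you flagged.} The claim that a right-special word forces its cylinder to meet a petal boundary is not a matter of care; it is false.

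The right extensions of an $n$-word $w$ record which of the $d$ subarcs $[wa]\subset[w]$ meet $X$. As $x$ runs over $[w]$, $E_d^n x$ runs over all of $\T$, so the condition ``$E_d^n$ of the arc meets a petal boundary'' is vacuous. Nothing prevents a single petal from containing all of $[w]$ while $X$ meets two of its subarcs.

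This already happens for $p=1$. Take the Sturmian set of rotation number $(3-\sqrt5)/2$ in its half circle $[c,c+\frac12]$. The endpoints are coded $0u$ and $1u$, where $u=0100101\ldots$ is the Fibonacci word. So the length-$3$ cylinders containing them are $[001]$ and $[101]$, whereas the unique right-special word of length $3$ is $010$.

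The inequality $\sum_w(\#\text{right extensions of } w-1)\le p$ is in fact true. But that is only because, when $E_d(X)=X$, this sum and the corresponding sum over left extensions both equal $C(n+1)-C(n)$.

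The mechanism that actually works is on the left side, and it is the paper's argument. The words $aw$ correspond to the $d$ preimage arcs $\frac{1}{d}([w]+j)$ of $[w]$. We have $E_d^{-1}(x)\cap F=\{\eta(x)\}$ unless $x$ is one of the $p$ discontinuities $z_1,\dots,z_p$ of the preimage selector, and $\eta$ uses a single branch on each continuity arc. Hence at most $1+\#\{i:z_i\in[w]\}$ of these preimage arcs meet $F\supset X$. Every $(n+1)$-word of $X$ has the form $aw$ with $w$ an $n$-word of $X$, so summing over $w$ gives $C(n+1)\le C(n)+p$.

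\textbf{Steps 1--2 do not close the gap on their own.}

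First, the components of $g^n(\T)$ need not lie in single $n$-cylinders. What you can count are the continuity intervals of $g^n$. There are at most $pn+1$ of them, since each $g^k$ is injective and so hits the discontinuity set of $g$ at most $p$ times for each $k<n$. Each such interval is mapped by $g^n$ into one cylinder.

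Second, and more seriously, this only accounts for points with $x=g^n(E_d^n x)$. The remaining points of $X$ are those whose orbit passes through the \emph{other} one-sided limit of $g$ at a critical value. You dismiss them by asserting that they contribute boundedly many extra words per length, with no argument. A priori each of these $p$ points can be visited at any time $0\le k<n$. That gives on the order of $pn$ candidate extra words, which would destroy the slope $p$.

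The left-extension count avoids this bookkeeping entirely, because it works directly with the closed set $F$. The second point of each fibre $F\cap E_d^{-1}(z_i)$ is exactly the one extra extension charged to $z_i$.
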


Theorem \ref{thm:2} generealizes Bullett and Sentenac beyond the Sturmian case. We will also show that, with a mild additional assumption, being contained in a flower is equivalent to the Sturmian-like property as introduced recently in \cite{gaoshen2}. 

Next we prove that for $E_2$, any invariant set that is contained in a flower can be realized as a coding of a special type of interval exchange transformation (IET). Interval exchange transformations are a type of dynamical system that generalize rotations. They are piecewise linear maps on the interval $[0,1)$, defined by cutting $[0,1)$ into finitely many subintervals of given lengths and then permuting the subintervals according to a given permutation. In section \ref{sec:IETs} we discuss basics of IETs and in Definition \ref{def:deckshuffler} we introduce the particular class of IETs that we use in the following theorem.

\begin{thm}[Theorems \ref{makesiet} and \ref{thm:containedinflower} in the main text]\label{thm:1}
    A subsystem of $E_2$ is contained in a flower $F$ with $2m-1$ petals and with $0 \notin F$ if and only if there is a right continuous increasing map that intertwines the dynamics of $E_2|_F$ with a deck shuffler IET with $2m$ intervals.
\end{thm}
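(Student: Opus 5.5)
The plan is to prove the two directions separately, using the fact that $E_2|_F$ is essentially a bijection $F\to\T$. Since $F\cup(F+\tfrac12)=\T$ and $F\cap(F+\tfrac12)=\partial F$, every $y\in\T$ has exactly one preimage in $F$, except for the finitely many images of boundary points. So $E_2|_F$ is injective away from $\partial F$.

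For the forward direction, I would first cut $F$ into pieces on which $E_2$ is monotone and lands inside a single petal. Write $F=P_1\cup\dots\cup P_{2m-1}$ with the petals ordered in $(0,1)$; this uses $0\notin F$. Each $E_2(P_i)$ is an arc of $\T$, and we intersect $P_i$ with $E_2^{-1}(P_j)$. Because $F$ has $2m-1$ petals and the images tile $\T$ exactly once, a counting of endpoints should show that the relevant pieces number exactly $2m$. The argument goes like this: the images $E_2(P_i)$ partition $\T$ into $2m-1$ arcs, and the $2m-1$ petals together with the $2m-1$ gaps interleave with them. Exactly one image arc wraps across $0$, and this contributes the extra cut. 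On each piece, $E_2$ preserves the linear order of $(0,1)$. The combinatorial rule sending pieces to their image positions is the permutation that will define the deck shuffler of Definition \ref{def:deckshuffler}. I expect it to be forced to have the ``two interleaved decks'' shape because $E_2|_F$ is a degree-one-like bijection.

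To turn the order-preserving piecewise structure into an isometric IET, I would build the intertwining map as $h(x)=\mu([0,x])$. Here $\mu$ is an $E_2$-invariant probability measure on the subsystem $X$; if $X$ is not uniquely ergodic, I would take a countable convex combination of ergodic measures that charges every nonempty relatively open subset of $X$ on which $E_2$ is recurrent. The map $h$ is increasing and right continuous. Invariance together with injectivity of $E_2$ on each piece gives $\mu(E_2(A))=\mu(A)$ for $A$ inside a piece. Hence $h\circ E_2$ differs from $h$ by a constant on each piece, which is exactly the intertwining with an IET whose interval lengths are the $\mu$-masses of the $2m$ pieces.

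For the converse, I would start from the intertwining map $h$ and the deck shuffler with $2m$ intervals. Pulling back the IET's $2m$ intervals by $h$ gives $2m$ arcs of $\T$ covering $X$ on which $E_2$ is order preserving. I would then define $F$ as the union of the convex hulls of these pieces, merging adjacent ones as dictated by the shuffler's permutation, and check two things. First, the number of petals is $2m-1$. Second, the flower identities $F\cup(F+\tfrac12)=\T$ and $F\cap(F+\tfrac12)=\partial F$ can be arranged by enlarging the hulls to fill the gaps. This step uses that the doubling images of the pieces are disjoint and cyclically ordered as the shuffler prescribes.

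The main obstacle, I expect, is the measure step of the forward direction. An invariant measure may fail to have full support on $X$, and then $h$ collapses intervals and need not intertwine on every point of $X$. If that happens, I would instead define $h$ through a symbolic coding. Code points of $X$ by the itinerary through the $2m$ pieces, order these itineraries lexicographically with the order induced by the piece permutation, and realize this ordered space as an IET orbit space via Rauzy-type induction. This recovers monotonicity directly, without needing a fully supported measure.
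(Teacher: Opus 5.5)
Your forward direction follows the paper's route (Theorem \ref{makesiet}). You take $h(x)=\mu([0,x])$, make a single cut at $\tfrac12$ in the petal through $\tfrac12$ (the one whose image wraps over $0$), and use the $\mu$-masses of the resulting $2m$ pieces as lengths. Your preliminary cut along the sets $E_2^{-1}(P_j)$ is not what you want, since it generally yields more than $2m$ pieces.

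\textbf{The forward direction.} The gap is that the deck-shuffler permutation, which is the real content, is only ``expected''. It comes from the flower axioms. With $0\notin F$, the conditions $F\cup(F+\tfrac12)=\T$ and $F\cap(F+\tfrac12)=\partial F$ force the pieces of $F-\tfrac12$ and of $F$ to alternate in $[0,\tfrac12)$:
$(P_m\cap[\tfrac12,1))-\tfrac12<P_1^*<P_{m+1}^*-\tfrac12<P_2^*<\cdots<P_{2m-1}^*-\tfrac12<P_m\cap[0,\tfrac12)$.
This gives $m$ pieces in each half. After doubling it gives $E_2(P_m\cap[\tfrac12,1))<E_2(P_1)<E_2(P_{m+1})<E_2(P_2)<\cdots$, which is exactly $T(B_1)<T(A_1)<T(B_2)<\cdots$.

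You also need the paper's hypotheses. First, $\mu$ must have no atoms on $\partial F$ (Lemma \ref{lem:atomsint}): $\mu(E_2S)=\mu(S)+\mu((S+\tfrac12)\setminus S)$, and injectivity on a piece does not remove the second term at boundary atoms. Second, every piece needs positive mass, since a deck shuffler's $2m$ intervals have positive length (compare Lemma \ref{lem:simplify}).

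Finally, the exact intertwining at every point of an arbitrary subsystem that your last paragraph aims for is impossible. If $x,x+\tfrac12\in X$, then $T(h(x))=h(E_2x)=h(E_2(x+\tfrac12))=T(h(x+\tfrac12))$ forces $h(x)=h(x+\tfrac12)$, which contradicts the half-circle alignment below. Every irrational Sturmian set contains such a pair: the endpoints $c,c+\tfrac12$ of its unique half circle, which avoids $0$. So you must work with $K=\supp(\mu)$, as the paper does, and allow finitely many exceptions. These are the last point of $K$ in each piece, where $h_\mu$ already equals the right endpoint of its IET interval; the same caveat applies to the paper's own pointwise check. Averaging the ergodic measures only reaches the union of their supports, and the Rauzy-induction fallback is not an argument as written.

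\textbf{The converse.} Here you take a genuinely different route, and a key step is missing. The paper never starts from an abstract intertwining map. It builds $H_l(x)=\sum_{n\ge0}\chi_B(T_l^nx)2^{-n-1}$ from the IET and proves it increasing and intertwining (Theorem \ref{thm:H_l}). It then gets the flower (Theorem \ref{thm:containedinflower}) from $H_l(x)=\tfrac12\chi_B(x)+\tfrac12H_l(T_lx)$ and the shuffler order, with Proposition \ref{prop:noncollapsing} preventing a one-point petal from collapsing. There the alignment $H_l(A)\subset[0,\tfrac12]$, $H_l(B)\subset[\tfrac12,1]$ is automatic from the first binary digit.

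In your pull-back nothing places the pieces in the correct halves, and without that the order of the doubled pieces says nothing about antipodes. The fix is Lemma \ref{lem:leftright}. If $h(x)\in A$, then $h(E_2x)=T(h(x))>h(x)$, so $E_2x>x$ because $h$ is nondecreasing, so $x\in(0,\tfrac12)$. Symmetrically, $h(x)\in B$ forces $x\in[\tfrac12,1)$. Now take $x,y,z\in X$ with $h(y)\in B_i$, $h(x)\in A_i$, $h(z)\in B_{i+1}$. The shuffler order gives $E_2y<E_2x<E_2z$, i.e.\ $y-\tfrac12<x<z-\tfrac12$. This is the petal interleaving, with $X\cap h^{-1}(A_m\cup B_1)$ forming the petal through $\tfrac12$.

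The degenerate contacts that the paper excludes by non-collapsing must also be handled, not ``arranged by enlarging the hulls''. Under your everywhere-defined hypothesis they cannot occur. The antipodal-pair argument gives $X\cap(X+\tfrac12)=\emptyset$ and $0,\tfrac12\notin X$. So the closed hulls are strictly separated from each other's antipodes and can be fattened into a flower avoiding $0$ with at most $2m-1$ petals. You get exactly $2m-1$ after splitting petals at gaps of the nowhere dense set $X$.

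With these steps your converse proves the literal ``if'' direction for an arbitrary intertwining map. That is a different statement from the paper's, which shows that every coding image $H_l([0,1))$ lies in a flower. Note, though, that your converse assumes more than the forward direction can deliver.
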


Theorem \ref{thm:1} is also an extension of the Sturmian case proved in \cite{bullet} that an invariant set contained in a half circle is an embedding of a rotation. The condition $0\notin F$ is necessary because deck-shuffler IETs do not have fixed points. In section \ref{section:flowertoiet} we will explain why $0\notin F$ is a mild condition.

The relationship described in Theorem \ref{thm:1} has measure theoretic meaning. Specifically, we will show that given a flower $F$ together with a measure $\mu$ supported in $F$, the cumulative distribution function of $\mu$ is a measure-theoretic isomorphism with a deck shuffler IET. Conversely, we will show that a special coding of a deck shuffler IET maps the interval into a flower, and that the pushforward of Lebesgue under that coding is $E_2$ invariant.

Finally, we will discuss of the relevance of flowers and low-complexity sets to the study of ergodic optimization. Ergodic optimization studies properties of maximizing measures in the following sense:
Let $S:X\to X$ be a topological dynamical system and let $\mathcal{M}_S$ be the set of all $S$-invariant probability measures. For a observable function $f:X\to \R$, consider\[
\beta(f)=\sup_{\mu \in \mathcal{M}_S}\int_X f d\mu .
\] Any measures $\mu$ that attain the maximum $\beta(f)$ are called maximizing measures for the system. See \cite{bochi} and \cite{jenk06} for more information about ergodic optimization. 

In general, it is expected that maximizing measures have low complexity. For example, Contreras proved in \cite{contreras} that maximizing measures of Lipschitz observables for expanding maps are typically periodically supported. Similarly, recently Huang et al. proved typical periodic optimization for Lipschitz functions for a broader class of subshifts in \cite{huangetal}. Both papers use a topological notion of typicality. As a specific example, in \cite{bousch}, Bousch shows that for the dynamics $E_2:\T\to \T$ and the family of observables $f(x)=a\cos(2\pi x)+b\sin(2\pi x)$, all maximizing measures are supported by Sturmian orbits, that is orbits that lift to Sturmian subshifts. The observables Bousch studied are called degree one trigonometric polynomials, and in general, a \emph{degree $n$ trigonometric polynomial} is a function of the form $\sum_{k=1}^n(a_1\cos(k2\pi x)+b_1\sin(k2\pi x))$. In section \ref{sec:ergodicopt}, we present numerical evidence that for any degree $n$ trigonometric polynomial, its maximizing measures must have support contained in a flower with $n$ petals.

In \cite{gaoshen2}, Gao et al use the Sturmian-like property to prove typicality (in both topological and probabilistic senses) of periodic optimization. A key step in their proof is showing that typically, maximizing measures are flower-supported (that is, Sturmian-like).

The paper is organized as follows. Section \ref{sec:flowers} defines flowers, proves Theorem \ref{thm:2}, and shows the relationship between flower-supported and Sturmian-like. In Section \ref{sec:IETs}, we define and discuss basics of IETs. In Section \ref{sec:flowerplusiet} we define the subclass of IETs called deck shuffler IETs and prove Theorem \ref{thm:1}, as well as some additional details of the relationship. There are explicit examples included. Finally, in Section \ref{sec:ergodicopt}, we discuss ergodic optimization and present numerical results.

\section{Flowers}\label{sec:flowers}
\subsection{Basic Definitions}\label{subsec:flowers}

Flowers were first defined by Brémont in \cite{brem05} and \cite{brem06}. The definition was later refined by Harriss and Jenkinson in \cite{flattening}, and theirs is the definition that we reproduce here.

Let $d\geq 2$ be an integer. Let $E_d:\T \to \T$ be the expanding map $E_d(x)=dx \mod 1$. In all three papers \cite{brem05}, \cite{brem06}, and \cite{flattening}, flowers are defined for any orientation-preserving expanding map of the circle. However, any orientation-preserving expanding map of the circle with degree $d$ is topologically conjugate to the map $E_d$ (see Theorem 2.4.6 in \cite{kh}). Therefore, we will deal directly with $E_d$.

\begin{defn}\label{def:preimageselector}
    A \emph{preimage selector} for $E_d$ is a map $\eta: \T \to \T$ such that $\eta(x) \in E_d^{-1}(x)$ for all $x \in \T$ and $\eta(x)$ has only finitely many jump discontinuities and no other discontinuities. Jump discontinuities are $z$ such that both $\lim_{x\to z^+}\eta(x)$ and $\lim_{x\to z^-}\eta(x)$ exist, they are not equal, and one of them is equal to $\eta(z)$.
\end{defn}
 The name ``preimage selector'' tells us everything we need to know: $\eta$ is a function that breaks the circle into some finite number of subintervals and chooses one of the $d$ preimage branches of $E_d$ to follow on each subinterval, although it must do so in a way that creates only jump discontinuities, no removable discontinuities. Note that $\eta$ is automatically injective. Additionally, the composition $E_d\circ \eta$ is the identity map on all of $\T$ while $\eta \circ E_d$ is the identity map only on $\eta(\T)$.

\begin{defn}
   The closure of the image $F:=\overline{\eta(\T)}$ is called a \emph{flower} for the map $E_d$. For $\eta$ with $p$ discontinuities, $F$ is a \emph{$p$-flower}. It has $p$ connected components, which are closed intervals, and are called \emph{petals}.   
\end{defn}
See Figure \ref{fig:flowerexs} for examples of flowers. A given preimage selector determines a unique flower. On the other hand, for a given flower $F$ there may be multiple preimage selectors $\eta$ with $\overline{\eta(\T)}=F$; however, all such preimage selectors will differ only in whether each jump discontinuity is left or right continuous, not in the location of the jump discontinuities. We avoid the ambiguity by making a choice that all preimage selectors will be right continuous.

A flower for $E_d$ has total Lebesgue measure equal to $1/d$.  For the doubling map $E_2$, a flower's petals are mutually non-antipodal except for at the end points of the petals. Similarly, for $E_d$ each petal $P$ of a flower will have the property that the interior of $P+\frac{i}{d}$ for $i \in \{1,...,d-1 \}$ does not intersect any other petal.
There may be some restrictions on the types of flowers that can appear for a fixed $E_d$. For example, for the doubling map $E_2$, in order to ensure that $\eta$ has only jump discontinuities and no removable discontinuities, any flower must have an odd number of petals. Note also that for $E_2$, a 1-flower is simply a half circle.
\begin{figure}
\centering
\begin{subfigure}{.5\textwidth}
  \centering
  \includegraphics[width=.7\linewidth]{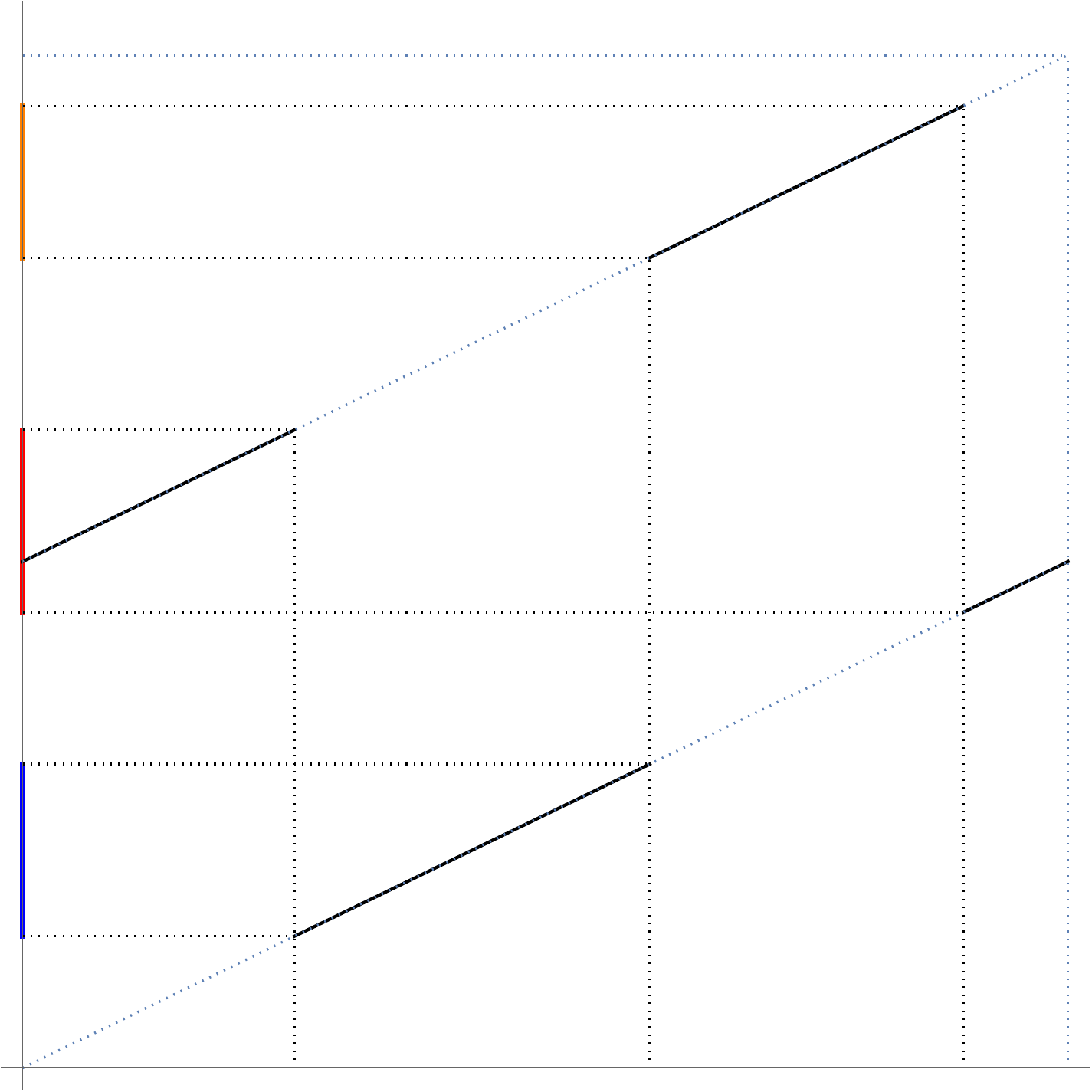}
  \caption{a 3-flower for $E_2$}
  \label{fig:sub1}
\end{subfigure}%
\begin{subfigure}{.5\textwidth}
  \centering
  \includegraphics[width=.7\linewidth]{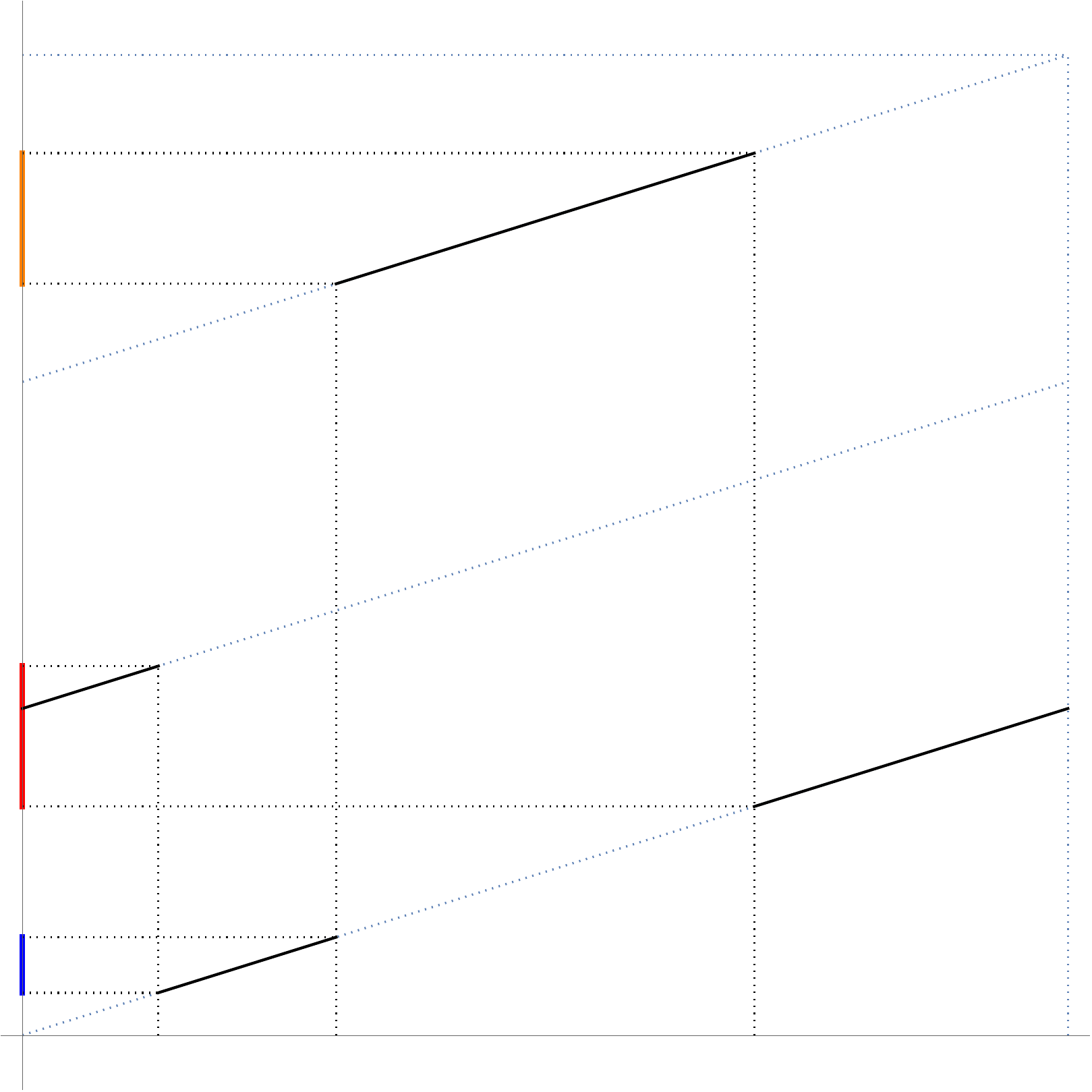}
  \caption{a 3-flower for $E_3$}
  \label{fig:sub2}
\end{subfigure}
\caption{Examples of preimage selectors and corresponding flowers}
\label{fig:flowerexs}
\end{figure}

Every flower for $E_d$ carries at least one $E_d$-invariant probability measure. Indeed for a flower $F$, define the maximal invariant set of $F$
\[
M_F=\bigcap_{n=-\infty}^\infty E_d^{-n}(F).
\]
Note that since $E_d(F)=\T$, we trivially have $M_F=\bigcap_{n=0}^\infty E_d^{-n}(F)$. Observe that by definition of a flower, for every $x\in\T$ and every $n\in \N$, there is always at least one point in $E_d^{-n}(x)$ that falls inside of $F$. 
In addition to this observation, there is at least one point $x \in F$ that has a forward orbit that always remains in $F$, as shown in the following lemma.

\begin{lemma}\label{lem:compactmaxinv}
    For a flower $F$ for the map $E_d$, the set $M_F$ is non-empty and compact.
\end{lemma}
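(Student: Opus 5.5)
Compactness comes first and is immediate. The flower $F=\overline{\eta(\T)}$ is a closed subset of the compact space $\T$, and $E_d$ is continuous, so each preimage $E_d^{-n}(F)$ is closed. Hence $M_F=\bigcap_{n\ge 0}E_d^{-n}(F)$ is an intersection of closed subsets of $\T$, so it is closed and therefore compact. (The remark before the lemma already reduces the bi-infinite intersection to the one over $n\ge 0$, using $E_d(F)=\T$.)

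For non-emptiness I will use the finite intersection property. Set $K_N=\bigcap_{n=0}^{N}E_d^{-n}(F)$. These are closed and decreasing in $N$, so by compactness of $\T$ it suffices to show that each $K_N$ is non-empty.

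To produce a point of $K_N$, fix any $y\in\T$ and put $x=\eta^{N}(y)$. I claim that $E_d^{n}(x)=\eta^{N-n}(y)$ for $0\le n\le N$. This follows by induction from the identity $E_d\circ\eta=\mathrm{id}_\T$. Each iterate $\eta^{N-n}(y)$ with $N-n\ge 1$ lies in $\eta(\T)\subset F$. The only remaining case is $n=N$, where the point is $y$ itself, so I will choose $y\in F$ at the start, for instance $y=\eta(0)$. Then $E_d^n(x)\in F$ for all $0\le n\le N$, so $x\in K_N$. In fact, with $y=\eta(z)$ we get $x=\eta^{N+1}(z)$, and every iterate $E_d^n(x)=\eta^{N+1-n}(z)$ lies in $\eta(\T)$.

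There is no real obstacle here. The one point needing care is that preimages should be taken with the branches chosen by $\eta$, rather than relying on the informal remark that some preimage always lands in $F$.

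A more direct alternative avoids the limit argument: take a limit point of the sequence $\eta^{k}(0)$, or consider $\bigcap_k\overline{\eta^k(\T)}$. However, the right-continuity and jump discontinuities of $\eta$ make it awkward to pass limits through $\eta$. For that reason I prefer the nested-compact-sets argument, which uses only the continuity of $E_d$ and closedness of $F$.
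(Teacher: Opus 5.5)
Your proof is correct and follows the paper's route: the finite intersections $\bigcap_{n=0}^{N}E_d^{-n}(F)$ are closed, nested and non-empty, so their intersection is non-empty by compactness. The only difference is that you prove non-emptiness explicitly by exhibiting the point $\eta^{N+1}(0)$, whereas the paper argues inductively from the fact that every point has at least one preimage in $F$.
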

\begin{proof}
    Since $\T$ is compact and $F$ is closed, $F$ is also compact. Consider \[M_1:=F\cap E_d^{-1}(F).\] Since every point $x\in \T$ has at least one preimage under $E_d$ that falls in $F$, we have that $M_1\neq \emptyset$. Similarly, the preimage $E_d^{-1}(F\cap E_d^{-1}(F))=E_d^{-1}(F)\cap E_d^{-2}(F)$ has non-empty intersection with $F$. Thus we obtain that for any $k$
    \[
    M_k:=F\cap E_d^{-1}(F)\cap...\cap E_d^{-k}(F) \neq \emptyset.
    \] Every $M_k$ is compact and they are nested
    \[
    M_1 \supset M_2 \supset...\supset M_K\supset...,
    \] so the intersection $M_F=\bigcap_{n=0}^\infty E_d^{-n}(F)$ is non-empty.
\end{proof}

Since $M_F$ is $E_d$ invariant, Lemma \ref{lem:compactmaxinv} implies that there is an $E_d$ invariant measure whose support is contained in $M_F \subset F$. Note that $M_F$ may contain proper closed invariant sets. For example, in the 1-flower $[0,\frac{1}{2}]$ for the doubling map $E_2$, $M_F$ is $\{0\} \cup \{ \frac{1}{2^n}\}_{n=1}^\infty$ but the minimal invariant set, which supports an $E_2$ invariant measure, is the fixed point $\{0\}$.

Recall from the introduction that in our paper, we take invariance of the set $Y$ to mean $E_d(Y)=Y$. This is stronger than the definition that is sometimes used of $E_d(Y)\subset Y$. The rationale for our definition is that we want to put a measure on the set. So we want to include the fixed point $\{0\}$ as an invariant set for $E_2$ but exclude $\{0\}\cap\{\frac{1}{2^n} \}_{n=0}^\infty$.

While every flower supports at least one invariant measure, in Section \ref{section:complexity} we will prove that there is an upper bound on how many invariant measures are supported in a flower.

\subsection{Complexity}\label{section:complexity}

Complexity of a dynamical system may be defined in several related ways. First, for symbolic dynamics, we have the notion of word complexity, which is thoroughly described for example in \cite{allou} or in Theorem 1.3.13 and Chapter 2 of \cite{lothaire}. 

Let $X_m^+=\{0,....m-1 \}^{\Z_{\geq 0}}$ be the one-sided shift space on $m$ symbols and $\sigma:X_m^+ \to X_m^+$ be the shift map $\sigma(x_0x_1x_2...)=(x_1x_2x_3...)$. A subset $X\subset X^+_m$ that is closed and $\sigma$ invariant is called a subshift. Its language $\mathcal{L}_X$ is the set of all allowable words in the subshift. The set $\mathcal{L}_X(n)$ is the set of all words of length $n$ in the language of $X$.
\begin{defn}\label{complexdef1}
    For a subshift $X\subset X_m^+$, the \emph{complexity function} is \[C_X(n)=\#\mathcal{L}_X(n).\]
\end{defn}
Using other terminology, the topology of the shift space $X^+_m$ is generated by cylinder sets of length $n$ which are sets that fix the first $n$ symbols:
\[
C_{a_0,...,a_{n-1}}=\{(x_i)_{i=0}^\infty | x_0= a_0,...,x_{n-1}=a_{n-1}\}.
\] The complexity function for a subshift $X$ counts the number of cylinder sets of length $n$ that intersect $X$.

A theorem of Morse and Hedlund (\cite{morsehedlund}) shows that any infinite subshift satisfies $C_X(n)\geq n+1$ for all $n$, equivalently that if $C_X(n)\leq n$ then $X$ is finite. As discussed in the introduction, we define a Sturmian subshift to be a subshift that is recurrent and that satisfies $C_X(n) \leq n+1$ for all $n$.

A subsystem of the system $(\T,E_d)$ is a subset $Y \subset \T$ that is closed and satisfies $E_d(Y)=Y$. The complexity of a subsystem of $E_d$ may be defined by considering the coding on $d$ symbols given by the partition of $d-$adic intervals.

\begin{defn}\label{complexdef2}
    For positive integer $d$, the \emph{$d$-adic intervals of length $d^{-n}$} or the \emph{$n^{th}$ $d$-adic intervals} are
    \[
    \left\{\left[\frac{j}{d^n},\frac{j+1}{d^n}\right) | j\in\{0,1,...,d^n-1\}\right\}
    \]
    For the subsystem $(Y\subset \T, E_d)$, the \emph{complexity function} is
    \begin{align*}
    C_Y(n)=\#\{\text{d-adic intervals of length $d^{-n}$ that intersect $Y$}\}.
\end{align*}
\end{defn}

To explain the relationship between Definitions \ref{complexdef1} and \ref{complexdef2}, consider the coding map $\chi_d:\T \to X_d^+$ given by
\[
\chi_d(x)=(x_i)_{i=0}^\infty \hspace{2mm} \text{ where }\hspace{2mm} E_d^n(x) \in [\frac{x_n}{d},\frac{x_{n+1}}{d})
\]
which makes the following diagram commute:
\begin{center}
    \begin{tikzcd}
    X_d^+ \arrow[rr, "\sigma"] && X_d^+ \\
    \T\arrow[u, "\chi_d"] \arrow[rr, "E_d"] &&\T\arrow[u, "\chi_d"] 
\end{tikzcd}
\end{center}
Since we take half open, half closed d-adic intervals, ours is a well defined coding.

Under $\chi_d$, the $d$-adic intervals are mapped to the cylinder sets, therefore counting the number of $d$-adic intervals of length $\frac{1}{d^n}$ that intersect $Y$ is equivalent to counting the number of $n$ words in the language of the corresponding coding $\chi_d(Y)\subset X^+_d$.

Finally we also define:
\begin{defn}
    For an $E_d$ invariant measure $\mu$, the complexity function of the measure is given by
    \[C_\mu(n)=C_{\supp(\mu)}(n).\]
\end{defn}

As discussed in the introduction, Sturmian subshifts can be embedded as subsystems of $E_2$ that lie fully inside a half circle and Sturmian shifts, or equivalently measures supported on Sturmian subsystems of $E_2$ have complexity $C(n)\leq n+1$. We extend this result by showing that any subsystem of $E_d$ that is contained inside a $p$-flower has at most linear complexity.

\begin{thm}\label{thm:complexitybound}
    If $Y\subset\T$ is an $E_d$ subsystem that is supported in a $p$-flower $F$, then its complexity function satisfies $C_Y(n) \leq pn+k$ for some constant $k$.
\end{thm}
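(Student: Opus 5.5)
We argue by counting how the number of $d$-adic intervals meeting $Y$ grows from level $n$ to level $n+1$; the goal is to show that this number increases by at most $p$ at each step. Since $Y\subset F$ and $E_d(Y)=Y$, every $y\in Y$ has a preimage in $Y$, and that preimage lies in $F$. Let $\eta$ be the (right continuous) preimage selector with $\overline{\eta(\T)}=F$. Each petal $P$ of $F$ is a closed interval of length less than $1$, so $E_d$ maps it injectively onto its image except possibly at endpoints. The plan is to relate level-$(n+1)$ intervals meeting $Y$ to level-$n$ intervals meeting $Y$ via $E_d$.

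\textbf{Step 1 (counting via $E_d$).} Each $(n+1)$-th $d$-adic interval $J=[\frac{j}{d^{n+1}},\frac{j+1}{d^{n+1}})$ is mapped by $E_d$ bijectively onto an $n$-th $d$-adic interval $I$. If $J$ meets $Y$, then $I$ meets $E_d(Y)=Y$. Hence
\[
C_Y(n+1)\le \sum_{I\ \text{meets}\ Y} \#\{J\subset E_d^{-1}(I) : J \text{ meets } Y\}.
\]
For a given $I$, the $d$ preimage intervals $J$ are the translates $J_0+\frac{i}{d}$, $i=0,\dots,d-1$. Any $J$ meeting $Y$ must meet $F$. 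If $J$ lies in the interior of $\eta(\T)$ on a single continuity branch of $\eta$, then it is the unique preimage interval of $I$ meeting $F$. This uses the property that interiors of $P+\frac{i}{d}$ avoid the other petals, together with injectivity of $\eta$, which implies that $F\cap E_d^{-1}(x)$ is a single point except at finitely many $x$.

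\textbf{Step 2 (exceptional points).} The points $x$ with more than one preimage in $F$ are exactly the images under $E_d$ of petal endpoints, namely the jump points of $\eta$. There are at most $p$ such values $z_1,\dots,z_p$ (possibly fewer after identifications). An $n$-th interval $I$ can have two or more preimage intervals meeting $F$ only if $I$ contains some $z_i$, or if a petal endpoint falls inside a preimage interval $J$ so that the petal ends mid-interval. In the latter case $E_d(J)=I$ contains the image of that endpoint, which is again some $z_i$. So at most $p$ intervals $I$ are exceptional at each level. For an exceptional $I$, I would first try to bound the number of preimage intervals meeting $F$ by $1$ plus the number of $z_i$ in $I$. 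A preimage interval of $I$ meets $F$ exactly when it meets $\eta(I)$, and $\eta(I)$ is a union of $1+\#\{z_i\in I\}$ connected pieces, each contained in a single preimage interval because $E_d$ maps each preimage interval bijectively onto $I$. Summing over $I$ gives
\[
C_Y(n+1)\le C_Y(n)+p,
\]
and induction yields $C_Y(n)\le pn+C_Y(0)=pn+1$, so one may take $k=1$. Since the theorem only asks for some constant $k$, any slack coming from boundary conventions (half-open intervals, right continuity, a petal endpoint sitting exactly on a $d$-adic point) can be absorbed into $k$.

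\textbf{Main obstacle.} The delicate step is Step 2: making rigorous that $\eta(I)$ splits into at most $1+\#\{z_i\in I\}$ pieces, and that each piece lies in a single $(n+1)$-th interval. The issue is that $F=\overline{\eta(\T)}$ also includes endpoint limits of petals that $\eta$ itself does not hit. The points of $F\setminus\eta(\T)$ are finitely many, so they can change the count only for boundedly many intervals overall, not at every level. To be careful, I would instead bound $C_Y(n)$ directly: every $n$-th interval meeting $Y$ is $\eta^n(I_0)$-related, and the finitely many extra endpoints contribute an additive constant absorbed into $k$. This yields $C_Y(n)\le pn+k$.
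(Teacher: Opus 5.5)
Your argument is essentially the paper's. You pull each level-$(n+1)$ interval meeting $Y$ back through $E_d$ to a level-$n$ interval $I$ meeting $Y$, show that at most $1+\#\{z_i\in I\}$ of the $d$ preimage intervals of $I$ can meet $Y$, and sum to get $C_Y(n+1)\le C_Y(n)+p$. Your decomposition of $\eta(I)$ into at most $1+\#\{z_i\in I\}$ single-branch pieces justifies that count better than the paper does; the paper only argues pointwise, that the $p$ jump values are the only points with two preimages in $F$. Strictly, a piece lies in one preimage interval because it is the image of a subinterval of $I$ under one inverse branch $x\mapsto (x+k)/d$, which carries $I$ onto $(I+k)/d$. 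Bijectivity of $E_d$ on each preimage interval is not the reason.

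The step that does not work as written is your last paragraph. The points $\eta(z_i^-)\in F\setminus\eta(\T)$ are finitely many, but nothing in your argument stops them from landing in a new level-$(n+1)$ interval at every level. If you account for them separately at each induction step, you only get $C_Y(n+1)\le C_Y(n)+2p$, which has slope $2p$ rather than an additive constant. The vague ``bound $C_Y(n)$ directly'' does not repair this.

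The fix is local and short, with three cases for a jump point $z_i$:
\begin{itemize}
\item If $z_i$ is in the interior of $I$, then $\eta(z_i^-)=(z_i+k)/d$, where $k$ is the branch used just left of $z_i$. So it lies in the same preimage interval $(I+k)/d$ as that piece.
\item If $z_i$ is the left endpoint of $I$, right continuity means the jump does not split $\eta(I)$. There are then only $\#\{z_i\in I\}$ pieces, and the one extra point brings the count back to at most $1+\#\{z_i\in I\}$.
\item If $z_i$ is the right endpoint, then $z_i\notin I$ and $\eta(z_i^-)\notin E_d^{-1}(I)$.
\end{itemize}
Hence $F\cap E_d^{-1}(I)$ meets at most $1+\#\{z_i\in I\}$ preimage intervals. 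With this, your induction gives $C_Y(n)\le pn+1$; the same count covers the step from $n=0$ to $n=1$.
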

\begin{proof}
    Fix $n\in \N$ and suppose we know $C_Y(n)$. To calculate $C_Y(n+1)$, we have
    \[ C_Y(n+1) = \#\Big\{ i \in \{0,...,d^{n+1}-1 \} \Big| [\frac{i}{d^{n+1}},\frac{i+1}{d^{n+1}}) \cap Y \neq \emptyset \Big\}.\]

    Note that the $n+1$-st $d$-adic intervals are the preimages under $E_d$ of the $n$-th $d$-adic intervals. 

Suppose that $I=[\frac{i}{d^{n+1}},\frac{i+1}{d^{n+1}})$ is such that $I \cap Y \neq \emptyset$, i.e. it is counted in $C_Y(n+1)$. 
If $I \cap Y$ is non-empty then so is $E_d(I \cap Y)$. By basic properties of intersection, $E_d(I \cap Y) \subset E_d(I) \cap E_d(Y)$. 
Since $Y$ is $E_d$-invariant, $E_d(I) \cap E_d(Y) =E_d(I) \cap Y$. Therefore since
$I \cap Y \neq \emptyset$ we can conclude that $E_d(I) \cap Y \neq \emptyset$.
Since the image $E_d(I)$ is an $n$-th $d$-adic interval with non-empty intersection with $Y$, it must have been counted in $C_Y(n)$. Therefore every interval counted in $C_Y(n+1)$ must be contained in a preimage of an interval that was counted in $C_Y(n)$.

Now, suppose that $J=[\frac{j}{d^n},\frac{j+1}{d^n})$ is such that $J \cap Y \neq \emptyset$, i.e. it was counted in $C_Y(n)$. 
There are $d$ possible preimages of $J$ that may be counted in $C_Y(n+1)$, namely
\begin{align}\label{preimages}
        E_d^{-1}&\Big([\frac{j}{d^n},\frac{j+1}{d^n})\Big)=\nonumber\\&\Big\{[\frac{j}{d^{n+1}},\frac{j+1}{d^{n+1}}),[\frac{j}{d^{n+1}}+\frac{1}{d},\frac{j+1}{d^{n+1}}+\frac{1}{d}),...,[\frac{j}{d^{n+1}}+\frac{d-1}{d},\frac{j+1}{d^{n+1}}+\frac{d-1}{d}) \Big\}
    \end{align}

   However, by the definition of a flower with $p$ petals, there are only $p$ pairs of points that are in the flower and that are of the form $x+\frac{k_2}{d}, x+\frac{k_1}{d}$ for $k_1 \neq k_2$ in $\{0,...,d-1 \}$. The $p$ pairs are exactly the end points of the flower petals, which come from
    \[
    \lim_{t \to z_i^+}\eta(t) \text{ and } \lim_{t \to z_i^-}\eta(t)
    \]
    for the $p$ discontinuity points $\{z_1,...,z_p \}$ of the preimage selector $\eta$.

Suppose that $J$ contains exactly $\delta_J$ of the $p$ discontinuity points $\{z_1,...,z_p\}$ of the preimage selector $\eta$ associated to the flower.
    Then, there are at most $1+\delta_J$ intervals in (\ref{preimages}) that intersect $Y$ and are therefore counted in $C_Y(n+1)$. Summing over all intervals counted in $C_Y(n)$ we obtain a total of $C_Y(n)+p$ intervals that are preimage intervals of those counted in $C_Y(n)$ and that intersect $Y$. Since we showed that all intervals counted in $C_Y(n+1)$ must come from preimages of those counted in $C_Y(n)$ this concludes that
    \[
    C_Y(n+1) \leq C_Y(n)+p
    \]
    and thus we obtain linear complexity
    \[
    C_Y(n) \leq p(n-1) + C(1).
    \]
\end{proof}

As a corollary, due to results by Cyr and Kra (\cite{cyrkra}), the number of invariant measures supported in a flower is bounded. Note that Cyr and Kra prove the complexity bound in the two sided shift space, while our setting is one-sided. By passing a subshift $Y \to \tilde{Y} \subset \{0,...,d\}^\Z$ to its natural extension, we obtain the following corollary.

\begin{cor}\label{cor:complexitybound}
    Let $F$ be a $p$-flower for $E_d$. Then there are at most $p-1$ ergodic $E_d$ invariant measures supported inside $F$.
\end{cor}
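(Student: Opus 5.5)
The plan is to combine Theorem \ref{thm:complexitybound} with the theorem of Cyr and Kra in \cite{cyrkra}. In the two-sided setting, their result says: if a subshift $\tilde{Y}\subset\{0,\dots,d-1\}^{\Z}$ satisfies
\[
\liminf_{n\to\infty}\frac{C_{\tilde Y}(n)}{n}<k
\]
for some integer $k\geq 2$ (or, in the variant with $\limsup$), then $\tilde{Y}$ carries at most $k-1$ distinct nonatomic ergodic measures. We therefore transfer the problem from $(\T,E_d)$ to a two-sided subshift, check the linear-growth hypothesis there, and transfer the conclusion back.

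\textbf{Step 1: reduce to one invariant set.} Every $E_d$-invariant measure supported in $F$ has support that is closed, satisfies $E_d(\supp\mu)=\supp\mu$, and lies in $F$. Hence it lies in the maximal invariant set $M_F$, which is nonempty and compact by Lemma \ref{lem:compactmaxinv}. Let $Y$ be the closure of the union of the supports of all invariant measures in $F$; this is a subsystem contained in $F$. The proof of Theorem \ref{thm:complexitybound} only uses that $Y$ is invariant and contained in $F$, so it gives $C_Y(n)\le p(n-1)+C_Y(1)$.

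\textbf{Step 2: pass to the two-sided shift.} Code $Y$ by $\chi_d$ to get a one-sided subshift $X=\chi_d(Y)$. Then take its natural extension
\[
\tilde X=\{(x_i)_{i\in\Z}: (x_{i+m})_{i\ge 0}\in X \text{ for all } m\}.
\]
Since $\sigma(X)=X$, the language of $\tilde X$ coincides with the language of $X$, so $C_{\tilde X}(n)=C_X(n)=C_Y(n)$. Ergodic measures correspond bijectively under the natural extension. The coding $\chi_d$ is injective and is measurably an isomorphism off the countable set of $d$-adic rationals. The only invariant measure that sees that set is the one supported on the fixed point $0$ (coded by $000\cdots$). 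So ergodic measures on $Y$ correspond bijectively to ergodic measures on $X$, and hence on $\tilde X$.

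\textbf{Step 3: apply Cyr and Kra.} From $C_{\tilde X}(n)\le pn+c$ we have $\liminf C(n)/n\le p<p+1$. Read literally, Cyr and Kra then give at most $p$ nonatomic ergodic measures. This is where I expect the main obstacle. To reach the sharper bound $p-1$, one needs the strict inequality $\liminf C(n)/n<p$, that is, a saving of at least one from the increment bound $C(n+1)\le C(n)+p$ infinitely often.

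My first attempt at this saving is to revisit the counting in the proof of Theorem \ref{thm:complexitybound}. A discontinuity $z_i$ of $\eta$ produces an extra preimage interval only if both petal endpoints $\lim_{t\to z_i^\pm}\eta(t)$ are actually approached by $Y$. The total discontinuity count $\sum_i(\text{jump contributions})$ must reconcile with the $d$-adic structure: since $E_d\circ\eta=\mathrm{id}$, the petal boundaries pair up, and at least one jump yields no genuinely new branching at fine scales. Concretely, I would show that for large $n$ at most $p-1$ of the $z_i$ can lie in intervals where both adjacent branches meet $Y$. The argument would compare with the $1$-flower case, which must give a unique measure consistent with Bullett--Sentenac, so the count is ``number of jumps minus one.''

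If that argument fails, the fallback is to use the refined Cyr--Kra statement. That version bounds ergodic measures by $k-1$ whenever $\liminf C(n)/n<k$, and it also handles periodic (atomic) measures via the difference $C(n+1)-C(n)$ being eventually at most $p$ rather than strictly less. One would then count carefully which branchings are consumed by distinct minimal components. In either case, the final step assembles the pieces: each ergodic measure on $F$ gives a distinct ergodic measure on $\tilde X$, so their number is at most $p-1$.
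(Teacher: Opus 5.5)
Your route (Theorem \ref{thm:complexitybound}, then the natural extension, then Cyr--Kra) is the paper's route; the paper's justification is only that citation. The gap is in Step 3, and the fix you propose is the wrong one. You use the $\liminf$ form of the Boshernitzan/Cyr--Kra bound, and that form really does give only $p$. The missing ingredient is the $\limsup$ form: if $\limsup_{n\to\infty} C(n)/n<K$ for an integer $K\ge 3$, the subshift carries at most $K-2$ nonatomic generic measures, hence at most $K-2$ nonatomic ergodic ones. Theorem \ref{thm:complexitybound} gives $C(n)\le pn+k$ for every $n$, so $\limsup_{n} C(n)/n\le p<p+1$. Taking $K=p+1$ gives $p-1$ directly once $p\ge 2$.

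No saving in the increment $C(n+1)-C(n)\le p$ is needed, and the saving you try to prove is false. An irrational Sturmian set inside a half circle has $C(n)=n+1$, so $C(n+1)-C(n)=p=1$ for all $n$. Your ``jumps minus one'' count would force every subsystem of a $1$-flower to have bounded complexity. That contradicts Bullett--Sentenac rather than agreeing with it.

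Your fallback, that Cyr--Kra ``also handles periodic (atomic) measures,'' is not available. Both forms of their theorem count only nonatomic measures, and linear complexity alone does not bound the number of periodic orbits. In fact, once atomic ergodic measures are counted, the statement as written is false, so no argument can close this part.
\begin{itemize}
\item For $p=1$, every flower carries an ergodic measure (Lemma \ref{lem:compactmaxinv}), but the bound is $0$.
\item For $d=3$, take the right-continuous selector with $\eta(x)=x/3$ on $[-\tfrac14,\tfrac14)$ and $\eta(x)=x/3+\tfrac13$ on $[\tfrac14,\tfrac34)$. It has two jumps, and its $2$-flower $[-\tfrac1{12},\tfrac1{12}]\cup[\tfrac5{12},\tfrac7{12}]$ contains both fixed points $0$ and $\tfrac12$ of $E_3$. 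That gives two ergodic measures against a bound of $1$.
\end{itemize}
What the Cyr--Kra route actually establishes, in your version or the paper's, is this: for $p\ge 2$, a $p$-flower supports at most $p-1$ \emph{nonatomic} ergodic measures. The atomic measures and the case $p=1$ have to be excluded from the statement, not proved.
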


\subsection{Flower-supported is equivalent to Sturmian-like}\label{sec:sturmianlike}

In a recent preprint \cite{gaoshen2}, Gao, Shen and Zhang define a ``Sturmian-like'' property and show that maximizing measures of typical potentials have this property. Moreover, they use Sturmian-like as an intermediate step for proving typicality of periodically supported maximizing measures. We will show that the Sturmian-like property is equivalent to being contained in a flower. First, we reproduce their definition here.

Let $E:\R/\Z \to \R/\Z$ be an expanding, analytic, orientation preserving map. Let $K \subset \R/\Z$ be a non-empty compact set with $E(K)=K$.

\begin{defn}\emph{(Gao-Shen-Zhang \cite{gaoshen2} page 12).}\label{def:sturmianlike}

    \begin{itemize}
        \item A point $x\in K$ is called a \emph{critical value} if $\#\{E^{-1}(x) \cap K\}> 1$. Let $\text{Crit}(K)$ denote the set of all critical points in $K$.
        \item A critical value $x$ is called \emph{regular} is there is a connected neighborhood $U\subset \R/\Z$ of $x$ such that for each of the two connected components $B$ of $U\setminus\{x\}$, there is a continuous map $\eta_B:B\to \T$ such that $E\circ \eta_B$ is the identity on $B$ and for $z\in B\cap K$, $\eta_B(z)$ is the unique point in $E^{-1}(z)\cap K$. Let $\text{Reg}(K) \subset \text{Crit}(K)$ denote the set of all regular critical points in $K$.
        \item The subsystem $E|_K$, or the set $K$ is called \emph{Sturmian-like} if any critical point in $K$ is regular.
    \end{itemize}
\end{defn}
Gao et al. define Sturmian-like and prove results for an expanding map, however since Sturmian-like is purely a topological property, we can again assume that $E$ is linear, i.e. $E_d=d x \mod 1$. For any $E_d$-invariant compact set $K$, the following simple lemmas result from the definitions of critical and regular.

\begin{lemma}\label{lemma:crit}
    The set $\text{Crit}(K)$ of critical points in $K$ is closed.
\end{lemma}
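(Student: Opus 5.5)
The plan is to prove closedness sequentially. Take a sequence $x_n \in \text{Crit}(K)$ with $x_n \to x$, and show that $x \in \text{Crit}(K)$. Since $K$ is compact, $x \in K$ automatically, so the real task is to produce two distinct points of $E_d^{-1}(x)\cap K$.

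For each $n$, choose two distinct preimages $y_n \neq y_n'$ in $E_d^{-1}(x_n)\cap K$. Distinct preimages under $E_d$ of the same point differ by a nonzero multiple of $1/d$. So $y_n' = y_n + k_n/d \pmod 1$ with $k_n \in \{1,\dots,d-1\}$.

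Next, pass to a subsequence along which $k_n = k$ is constant, which is possible since $k_n$ takes only finitely many values. Using compactness of $K$, pass to a further subsequence with $y_n \to y \in K$. Then $y_n' = y_n + k/d \to y + k/d$, and this limit also lies in $K$ because $K$ is closed.

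Continuity of $E_d$ gives $E_d(y) = \lim E_d(y_n) = \lim x_n = x$, and likewise $E_d(y+k/d) = x$. Since $k \not\equiv 0 \pmod d$, the points $y$ and $y+k/d$ are distinct, so $\#(E_d^{-1}(x)\cap K) \ge 2$ and $x \in \text{Crit}(K)$.

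The only real subtlety is keeping the two preimages from collapsing to a single point in the limit. This is exactly why I fix the offset $k/d$ via the pigeonhole step before taking limits: the separation between the two preimages is then uniformly at least $1/d$, which rules out collapse.
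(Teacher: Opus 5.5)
Your proof is correct and follows essentially the same route as the paper: compactness gives convergent subsequences of two distinct preimages in $K$, uniform separation of at least $1/d$ keeps the limits distinct, and continuity of $E_d$ makes both limits preimages of $x$. Fixing the offset $k/d$ by pigeonhole is just a slightly more explicit version of the paper's observation that distinct preimages stay at least $1/d$ apart.
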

\begin{proof}
    Let $\{x_i\}_{i=0}^\infty$ be a sequence of  critical points in $K$ that converge to a point $x$. Since $K$ is compact, $x \in K$, but we will further show that $x$ is critical. For each critical point $x_i$ there are at least two distinct preimages, call them $p_i, q_i \in E_d^{-1}(x_i)$ that are inside of $K$. By compactness of $K$, by passing to a subsequence we can assume that $p_i \to p \in K$ and $q_i \to q \in K$. Since for every $i$, the $p_i$ and $q_i$ are distinct preimages under $E_d$ of the point $x_i$, they must be bounded apart by at least $1/d$; therefore $p\neq q$.
    
   Since $E_d$ is continuous as a map on the circle, we have that $E_d(p_i) \to E_d(p)$ and $E_d(q_i)\to E_d(q)$. But $E_d(p_i)=E_d(q_i)=x_i$ so since a sequence can only have one limit we have $x_i \to E_d(p)=E_d(q)=x$. Therefore $p$ and $q$ are two distinct preimages of $x$ that are in $K$, so $x$ is a critical point.
\end{proof}

\begin{lemma}\label{lemma:reg}
    The set $\text{Reg}(K)$ of regular critical points in $K$ is discrete.
\end{lemma}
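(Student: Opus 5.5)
We argue by contradiction. Suppose $x \in \text{Reg}(K)$ is not isolated in $\text{Reg}(K)$. Then there are regular critical points $x_i \in \text{Reg}(K)\setminus\{x\}$ with $x_i \to x$. Since $x$ is regular, fix the connected neighborhood $U$ of $x$ supplied by Definition \ref{def:sturmianlike}, together with the continuous sections $\eta_B$ on the two components $B$ of $U\setminus\{x\}$. Infinitely many $x_i$ lie in $U\setminus\{x\}$, so passing to a subsequence we may assume they all lie in one component $B$.

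Now fix one such $x_i \in B$. Because $x_i\in K$ and $\eta_B$ is defined at $x_i$, the regularity of $x$ says that $\eta_B(x_i)$ is the unique point of $E_d^{-1}(x_i)\cap K$. Hence $\#\{E_d^{-1}(x_i)\cap K\}=1$. But $x_i$ is a critical value, so $\#\{E_d^{-1}(x_i)\cap K\}>1$. This is a contradiction.

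It follows that every regular critical point has a punctured neighborhood containing no points of $\text{Crit}(K)$ at all, and in particular no points of $\text{Reg}(K)$. So $\text{Reg}(K)$ is discrete, and indeed every point of $\text{Reg}(K)$ is isolated in $\text{Crit}(K)$.

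The only delicate point is reading the definition correctly. The uniqueness clause ``for $z\in B\cap K$, $\eta_B(z)$ is the unique point in $E^{-1}(z)\cap K$'' applies to every $z$ in the punctured neighborhood, not only to $z=x$. Once this is taken literally, no compactness or continuity argument is needed.
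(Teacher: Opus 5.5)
Your proof is correct and is essentially the paper's argument: the uniqueness clause in the definition of regularity gives $\#\{E_d^{-1}(z)\cap K\}=1$ for every $z\in (U\setminus\{x\})\cap K$, so $U$ contains no critical value other than $x$ and hence $U\cap\text{Reg}(K)=\{x\}$. The sequence-and-contradiction framing is harmless but unnecessary; as you note yourself, applying that clause directly to an arbitrary point of the punctured neighborhood already gives the conclusion.
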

\begin{proof}
    Discrete means that for any $x \in \text{Reg}(K)$, there is a neighborhood $U$ of $x$ such that there are no regular points in the neighborhood other than $x$, i.e. $U\cap \text{Reg}(K)=\{x\}$. The definition of regular provides a neighborhood $U \ni x$ where there are no critical points in $U$ other than $x$, so there certainly can't be any other regular critical points in $U$.
\end{proof}

It turns out that with a mild additional condition on $K$ of all points being non-wandering, the $\eta|_B$ in this definition is part of a preimage selector $\eta$ as in Definition \ref{def:preimageselector}. Recall that a point $x\in K$ is called \emph{non-wandering} for $E_d$ if for every open neighborhood $U \ni x$, there is an integer $n>0$ such that $E_d^n(U) \cap U \neq \emptyset$.

\begin{defn}
    Let $K \subset \T$ be a closed $E_d$-invariant set. We say that $K$ has the \emph{non-wandering property} if the non-wandering set of $E_d|_K$ is $K$. That is, for all $x \in K$ and all $\varepsilon>0$, there is a point $y\in K$ and a positive integer $n>0$ such that $d(x,y)<\varepsilon$ and $d(x,E_d^n(y))<\varepsilon$.
\end{defn}
  The non-wandering property means that every point in $K$ is non-wandering with respect to $K$. This property is satisfied when $K$ is the support of a measure, hence why we call this a mild condition for our purposes.

\begin{lemma}\label{lemma:nonwandering}
   Suppose that $K\subset \T$ is a closed $E_d$-invariant set that has the non-wandering property. If a point $y \in K$ is isolated in $K$, then the point $y$ is periodic and all points in the orbit of $y$ are isolated in $K$.
\end{lemma}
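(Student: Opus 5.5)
Since $y$ is isolated in $K$, there is $\varepsilon_0>0$ with $B(y,\varepsilon_0)\cap K=\{y\}$. The plan is to feed this neighborhood into the non-wandering property and let isolation force the point $y'\in K$ it produces to equal $y$.

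\textbf{Step 1: $y$ is periodic.} Take $\varepsilon<\varepsilon_0$. The non-wandering property gives $y'\in K$ and $n>0$ with $d(y,y')<\varepsilon$ and $d(y,E_d^n(y'))<\varepsilon$. Both $y'$ and $E_d^n(y')$ lie in $K$, because $K$ is invariant ($E_d(K)=K$, so $E_d^n(K)\subset K$). Both are also within $\varepsilon<\varepsilon_0$ of $y$. Isolation then forces $y'=y$ and $E_d^n(y)=y$, so $y$ is periodic with some period $q\le n$.

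\textbf{Step 2: the rest of the orbit is isolated.} Write the orbit as $y_0=y,\ y_1=E_d(y),\dots,y_{q-1}$. Suppose some $y_j$ is not isolated, and choose $z_i\in K\setminus\{y_j\}$ with $z_i\to y_j$. Applying $E_d^{q-j}$, which is continuous, gives $E_d^{q-j}(z_i)\to E_d^{q-j}(y_j)=y$. These images lie in $K$, so for large $i$ isolation of $y$ forces $E_d^{q-j}(z_i)=y$.

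Now use that $E_d^{q-j}$ is a local homeomorphism. On a small neighborhood $V$ of $y_j$, chosen of diameter less than $d^{-(q-j)}$, the map is injective. For large $i$ both $z_i$ and $y_j$ lie in $V$ and have the same image $y$, so $z_i=y_j$. This contradicts the choice of $z_i$, so every $y_j$ is isolated in $K$.

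The main point to watch is Step 2. Isolation of $y$ alone does not say anything directly about its forward images, so the argument must pull isolation back along the orbit using local injectivity of the expanding map. This is where the fact that $E_d$ is a covering map, and not merely continuous, is essential. Step 1 is routine once one notices that both $y'$ and $E_d^n(y')$ lie in $K$.
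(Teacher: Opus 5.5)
Your proof is correct and follows the same route as the paper: Step 1 is exactly the paper's argument of feeding the relatively open set $\{y\}$ into the non-wandering property. For Step 2 the paper only says ``by continuity of $E_d$,'' which by itself does not suffice. Your use of local injectivity of $E_d^{q-j}$ on arcs of length less than $d^{-(q-j)}$ is exactly the ingredient needed to make that step rigorous.
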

\begin{proof}
   If $y$ is isolated then $\{y\}$ is itself an open set $U$. Thus by taking $U=\{y\}$ in the definition of non-wandering, it is guaranteed that there is some time $n$ when $y$ returns to itself, hence it is periodic.
   
    If $y$ is isolated then by continuity of $E_d$ on the circle, every point in its orbit is also isolated.
\end{proof}

Note that any point that is pre-periodic but not truly periodic is wandering, so if $K$ has the non-wandering property then every pre-periodic point is truly periodic. The next lemma shows that assuming the non-wandering property, regular points have at most two preimages that are contained in $K$, which will be necessary to prove that any Sturmian-like set is contained in a flower.

\begin{lemma}\label{lemma:regupperbound}
    Suppose that $K\subset \T$ is a closed $E_d$-invariant set that has the non-wandering property. Let $y\in K$ be a regular critical point with continuous functions 
    \[
    \eta_-(x)=\frac{x}{d}+\frac{j_1}{d}, \quad \eta_+(x)=\frac{x}{d}+\frac{j_2}{d}
    \]
    on the left and right connected components $B_-,B_+$ respectively of $U\setminus\{y\}$ as defined in Definition \ref{def:sturmianlike}. Then, $j_1\neq j_2$, any preimages of $y$ that are in $K$ must satisfy $y_1=\eta_-(y)$ and $y_2=\eta_+(y)$, and so in particular, $\#\{E_d^{-1}(y)\cap K\}=2$.
\end{lemma}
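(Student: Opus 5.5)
The plan is to use the non-wandering property to show that every preimage of $y$ in $K$ is approximated by preimages of nearby points of $K$, and to use regularity to see that those nearby points have only one preimage in $K$, given by $\eta_\pm$. The argument goes in three steps.

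\textbf{Step 1: every point of $E_d^{-1}(y)\cap K$ is accumulated from at least one side.} Let $y'\in E_d^{-1}(y)\cap K$. Suppose first that $y'$ is isolated in $K$. By Lemma \ref{lemma:nonwandering}, $y'$ is periodic and its whole orbit is isolated. Then $y=E_d(y')$ lies on a periodic orbit, and $y'$ is the predecessor of $y$ on that orbit. I need to exclude a second preimage $y''\neq y'$ in $K$. If $y''$ were isolated, it would also be periodic by Lemma \ref{lemma:nonwandering}. But then two distinct periodic points would map to the same point $y$, which is impossible since $E_d$ is injective on a periodic orbit and two orbits passing through $y$ coincide. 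Hence $y''$ is not isolated. So in every case, a preimage that is not the unique periodic predecessor is an accumulation point of $K$. Along a sequence $z_i\to y''$ in $K\setminus\{y''\}$, the images $E_d(z_i)\to y$ lie in $K$ and eventually land in $U$. After passing to a subsequence they lie in $B_-$, in $B_+$, or equal $y$; the last case is impossible for $z_i$ close to $y''$ with $z_i\neq y''$, because $E_d$ is locally injective. By regularity, $z_i=\eta_B(E_d(z_i))$ for the corresponding side $B$. Letting $i\to\infty$ and using continuity of the affine branch gives $y''=\eta_-(y)$ or $y''=\eta_+(y)$.

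\textbf{Step 2: the periodic isolated preimage also equals one of $\eta_\pm(y)$.} If $y'$ is isolated, then $y$ is isolated too, so $K\cap U$ is locally just $\{y\}$ and regularity gives no information. In that case I would instead use criticality: since $y$ has at least two preimages in $K$, at least one of them, say $y''$, is non-isolated by the argument in Step 1. Then $y$ is a limit of $E_d(z_i)$ with $z_i\to y''$ and $z_i\neq y''$, so $y$ is not isolated after all. This contradiction shows that the isolated case never occurs, and Step 1 applies to every preimage.

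\textbf{Step 3: $j_1\neq j_2$ and the count.} Every element of $E_d^{-1}(y)\cap K$ lies in $\{\eta_-(y),\eta_+(y)\}$. Since $y$ is critical, this set has at least two elements, so $\eta_-(y)\neq\eta_+(y)$, that is, $j_1\neq j_2$. The count $\#\{E_d^{-1}(y)\cap K\}=2$ follows. The remaining claim is that the preimage approached from the left equals $\eta_-(y)$, which holds by construction of the limits in Step 1.

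The main obstacle is the isolated/periodic case in Steps 1--2. There one must make sure that non-wandering forces some preimage to be accumulated, so that regularity gives information. The subtle point is the assertion that two distinct isolated preimages of $y$ cannot both be periodic. I would double-check it by following the orbits forward from $y$ back to the periodic points.
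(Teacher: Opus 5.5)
Your proof is correct and follows essentially the paper's route: regularity forces any preimage of $y$ in $K$ other than $\eta_\pm(y)$ to be isolated (your limit argument in Step 1 is the contrapositive of this), the non-wandering lemma then makes that preimage periodic, and criticality gives the contradiction. Your Step 2 is in fact more careful than the paper. The paper disposes of the second preimage by saying it is pre-periodic and hence not in $K$, which is false for general non-wandering $K$ (e.g.\ $\tfrac12\in K=\T$ under $E_2$). Your argument supplies the justification that actually works here: the second preimage cannot be isolated, so it must be non-isolated, and that would make the isolated periodic point $y$ non-isolated.
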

\begin{proof}
    By definition of being a critical point, $\#\{E_d^{-1}(y) \cap K\}\geq2$, so we prove an upper bound on the number of preimages that are contained in $K$. Let $y \in K$ be a regular critical point and let $\eta_-, \eta_+$ be as defined in the statement of the lemma. Suppose for the sake of contradiction that $y_3 =\frac{y}{d}+\frac{j_3}{d}$ where $j_3$ is not equal to either $j_1$ or $j_2$. Then by the definition of regularity, $y_3$ must be isolated in $K$. By Lemma \ref{lemma:nonwandering}, $y_3$ is periodic, and so is $y$. However that implies that any other preimage of $y$ is pre-periodic, and therefore cannot be contained in $K$. This is a contradiction with $y$ being a critical point. Thus every preimage of $y$ that is contained in $K$ must not be isolated, and therefore the functions $\eta_-,\eta_+$ are such that $j_1\neq j_2$ and the preimages of $y$ that fall in $K$ correspond to the two branches chosen by $\eta_-$ and $\eta_+$.
\end{proof}

With the above lemmas, we may finally prove the following theorem.

\begin{thm}
    Let $E_d:\R/\Z \to \R/\Z$ be the map $E_d(x)=dx \mod 1$. Let $K \subset \R/\Z$ be a non-empty compact set with $E_d(K)=K$ that has the non-wandering property. Then the following are equivalent:
    \begin{enumerate}
        \item\label{1} $K$ is Sturmian-like.
        \item\label{2} $K$ has only finitely many critical points.
        \item\label{3} $K$ is contained in a flower.
    \end{enumerate}
\end{thm}
\begin{proof}
   (\ref{1} $\Leftrightarrow$ \ref{2}): Suppose that $K$ is Sturmian-like. Then the set of regular points is exactly equal to the set of critical points. By Lemma \ref{lemma:crit}, the set of critical points is closed. Therefore, in the Sturmian-like case the set of regular points is closed. By Lemma \ref{lemma:reg}, the set of regular points is discrete. A closed discrete subset of a compact set is finite. Therefore, the set of regular points us finite.

  For the converse direction, suppose that $K$ has finitely many critical values $x_1,...,x_n$. Then there are pairwise disjoint neighborhoods $U_1,..,U_n$ around each critical point such that inside each neighborhood $U_i$, there are no critical values other than $x_i$. In particular every point in $U_i\setminus \{ x_i\}$ has at most one preimage that falls inside of $K$. For any $z \in U_i \setminus \{x_i\}$, let $\eta(z)$ denote the unique point in its preimage that intersects $K$. We claim that $\eta$ is continuous on each component of $U_i\setminus\{x_i\}$. Indeed let $z$ be in one of the components of $U_i\setminus\{x_i\}$ and let $z_n$ be a sequence such that $z_n \to z$. Consider the sequence $\eta(z_n)$. By compactness of $K$, we can pass to a subsequence to get that $\eta(z_{n_k})$ converges to a point $y \in K$. By continuity of $E_d$, we obtain that $E_d(y)=z$. Since $z$ is not critical, we must have $y=\eta(z)$. Therefore $\eta(z_n) \to \eta(z)$ so $\eta$ is continuous.

   ($1 \Leftrightarrow 3$): First let $F$ be a flower and suppose that $K \subset F$. By definition of a flower, $F$ is the image of a preimage selector $\eta$, which has finitely many jump discontinuities $\{x_1,...,x_{p} \}$. As stated in section \ref{subsec:flowers}, the flower $F$ defines a unique preimage selector by taking $\eta$ to be right-continuous. The preimages $E_d^{-1}(x_i)$ of the discontinuity points are the end points of the petals of $F$. By definition of a flower, the $x_i$ are the only points such that $\#(E_d^{-1}(x_i)\cap F)>1$. Thus the $\{x_i\}$ are the only possible critical values. 

    For each $x_i\in K$, the preimage selector $\eta$ defines $\eta_B$ that satisfies the definition of regularity. Indeed let $x_i \in K$ be a critical point. Then for $\varepsilon$ small enough, for each one-sided neighborhood $B=(x-\varepsilon,x)$ or $(x,x+\varepsilon)$, the image of $\eta|_{B}$ is entirely contained in one petal of $F$ and is therefore continuous and satisfies the property that for $y \in B, \eta(y)$ is the unique point in $E_d^{-1}(y)$ that falls inside $F$, and therefore possibly $K$.

    For the converse direction, suppose that $K$ is a set such that $E_d|_K$ is Sturmian-like. Then every critical value of $K$ is regular and there are only finitely many critical values (as proved above since $1 \Rightarrow 2$). 
    Label the critical values $\{ x_1<x_2<...<x_p\}$.

    Note that by Lemma \ref{lemma:regupperbound}, for each critical point $x_i$, the maps $\eta_-$ and $\eta_+$ are distinct and there are only two preimages of $x_i$ that fall in $K$. Therefore, in order to construct a pseudo-preimage selector from the $\eta_\pm$'s, we need only check compatibility between adjacent critical values. To be precise in checking compatibility, we will explicitly label the maps $\eta_-, \eta_+$ for a given $x_i$ by $\eta_{(x_i-\varepsilon,x_i)}$ and $\eta_{(x_i,x_i+\varepsilon)}$.

    For two adjacent critical values $x_i<x_{i+1}$, if $\eta_{(x_i,x_i+\varepsilon)}$ and $\eta_{(x_{i+1}-\varepsilon,x_{i+1})}$ are equal as functions, meaning they have chosen the same branch $\frac{y}{d}+\frac{k}{d}$ then define 
    \begin{equation}\label{case1}
    \eta=\eta_{(x_i,x_i+\varepsilon)}=\eta_{(x_{i+1}-\varepsilon,x_{i+1})} \text{ for all } x \in (x_i,x_{i+1}).
    \end{equation} 
    On the other hand, suppose that $\eta_{(x_i,x_i+\varepsilon)}$ and $\eta_{(x_{i+1}-\varepsilon,x_{i+1})}$ are not equal, for example say that $\eta_{(x_i,x_i+\varepsilon)}=\frac{x}{d}+\frac{l_1}{d}$ and $\eta_{(x_{i+1}-\varepsilon,x_{i+1})}=\frac{x}{d}+\frac{l_2}{d}$ for $l_1 \neq l_2$ in $\{0,...,d-1\}$. Since $K$ is compact and invariant, it is either all of $\T$ or has empty interior. However clearly $K \subsetneq \T$ since for example $\frac{x}{d}+\frac{l_2}{d} \notin K$ for $x \in (x_i,x_i+\varepsilon)$. Therefore $K$ has empty interior and in particular the entire interval $[x_i,x_{i+1}]$ can not be in $K$. Thus there exists a point $y \in (x_i,x_{i+1})$ such that $y\notin K$. 
     Now define \begin{equation}\label{case2}
         \eta=\begin{cases}
        \eta_{(x_i,x_i+\varepsilon)} \text{ for } x \in (x_i,y) \\\eta_{(x_j-\varepsilon,x_j)} \text{ for } x \in (y,x_j)
        
    \end{cases}
     \end{equation}
    Repeating this procedure for each pair of adjacent critical points $x_i,x_j$ defines a map $\eta$ with at most $2p$ jump discontinuities, where $p$ is the number of critical values of $K$. We have $K \subset \eta([0,1))$ which is known to be a flower. This completes the proof
\end{proof}

\section{IETs}\label{sec:IETs}

\subsection{Basic Definitions}
An interval exchange transformation (IET) is a map $T: [0,1) \to [0,1)$ that is defined by length data together with a permutation.

\begin{defn}\label{def:iet}
    Let $l=(l_1,...,l_m)$ be a vector of $m$ positive numbers with $l_1+...+l_m=1$. Let $\tau:\{1,...,m\} \to \{1,...,m\}$ be a permutation on $m$ symbols. Let $I_1=[0,l_1),I_2=[l_1,l_1+l_2),...I_m=[l_1+...+l_{m-1},1)$ be the intervals of length $l_1,...,l_m$ respectively. Then the IET $T:[0,1)\to[0,1)$ defined by $(l, \tau)$ is 
     \[
T(x)=x+\sum_{\tau^{-1}(i)<\tau^{-1}(m)} l_i -\sum_{j<m} l_j \hspace{1mm} \text{ for }\hspace{1mm} x\in I_m
\]
\end{defn}
The IET is the piecewise linear map that permutes the intervals $I_1,...,I_m$ according to $\tau$. 
An IET of $m$ intervals is called an $m$-IET. See Figure \ref{fig:IETex} for an example of a 3-IET. 

Note that a 2-IET with length data $(l_1,1-l_1)$ is simply a rotation by $1-l_1$. Any 3-IET, is either a rotation or has one interval $I_1, I_2, I_3$ on which the IET is the identity in which case it is the first return map of a rotation. 4-IETs are the first example of ``interesting'' IETs.

\begin{figure}
    \centering
    \includegraphics[width=0.6\linewidth]{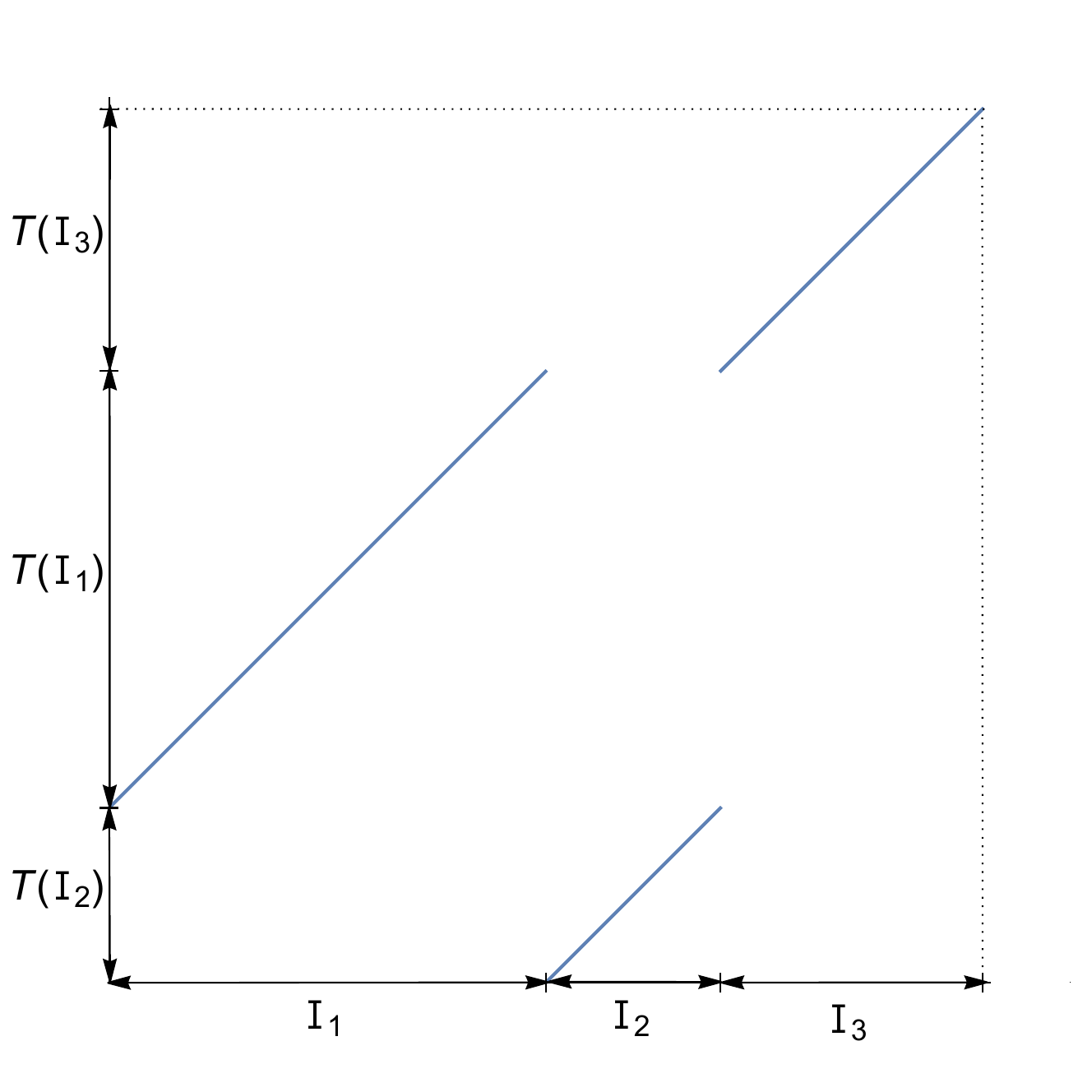}
    \caption{A 3-IET with length data $(0.5, 0.2, 0.3)$ and permutation $1\mapsto2, 2\mapsto1, 3\mapsto3$}
    \label{fig:IETex}
\end{figure}

Any IET has an inverse which is also an IET.

Foundational papers in the study of IETs (\cite{veech1}, \cite{keane}, \cite{yoccoz}), also discuss the following three properties.

\begin{defn}\label{def:irreducible}
    An IET $T$ with length data $(l_1,...,l_m)$ and permutation $\sigma$ is called \emph{irreducible} if there is no $j \leq m$ such that $\sigma(\{1,...,j\})=\{1,...,j\}$. 
\end{defn}
An IET that is not irreducible can be decomposed into IETs on disjoint intervals.

\begin{defn}
    An IET is called \emph{minimal} if every orbit is dense.
\end{defn}
\begin{defn}\label{def:keanes}
    An IET is said to satisfy \emph{Keane's property} if every end point of a starting interval, $0,l_1,l_1+l_2,...,1$ has a distinct infinite orbit.
\end{defn}

It was proved in \cite{keane} that irreducibility plus rational independence of the lengths $l_1,...,l_m$ imply Keane's property and that Keane's property implies minimality. Neither of the converse implications hold, as discussed in section 2 of the paper \cite{ferenczi}.

\subsection{Symbolic Representation}
Any $m$-IET has a natural symbolic representation in $m$ symbols. As in Section \ref{section:complexity}, let $X_m^+:=\{1,...,m\}^{\Z_{\geq0}}$ be the space of one-sided sequences of $m$ symbols and let $\sigma$ be the left shift on $X_m^+$, that is $\sigma(x_0x_1x_2...)=x_1x_2...$. A \emph{subshift} of the one-sided shift space is a closed subset $X \subset X_m^+$ that is invariant under $\sigma$, that is satisfies $\sigma(X)=X$.
\begin{defn}\label{def:naturalcoding}
    For an $m$-IET $T$ with defining intervals $I_1,...,I_m$, the \emph{natural coding} for $T$ is the map $\Phi: [0,1)\to \{0,...,m-1\}^{\Z_{\geq0}}$ that maps $x \mapsto \{x_n\}_{n\geq 0}$ where $T^n(x) \in I_{x_n}$
\end{defn}

The natural coding intertwines the dynamics of $T$ and the shift map on the symbolic space $X_m^+$ as shown in the commuting diagram \ref{diagram:naturalcoding}. Indeed if the itinerary of the orbit of $x\in [0,1)$, with respect to the intervals $I_1,...,I_m$ is $I_{i_1},I_{i_2},I_{i_3},...$ then the itinerary of the orbit of image $T(x)$ is exactly $I_{i_2},I_{i_3},...$, thus $\Phi(T(x))=\sigma(\Phi(x))$.

    \begin{equation}
    \begin{tikzcd}\label{diagram:naturalcoding}
    {X^+_m} \arrow[r, "\sigma"] & {X^+_m}\\
    {[0,1)}\arrow[u, "\Phi"] \arrow[r, "T"] &{[0,1)}\arrow[u, "\Phi"]
\end{tikzcd}
    \end{equation}

For a 2-IET, that is a rotation, the natural coding gives a Sturmian subshift of the shift on 2 symbols.

\subsection{Invariant Measures}
Every IET preserves Lebesgue measure on $[0,1)$. It has been shown that almost every IET is uniquely ergodic (\cite{veech1}), however there are examples of IETs with ergodic measures other than Lebesgue (see Masur \cite{masur} and Veech \cite{veech1}).

\section{Correspondence between IETs and flowers}\label{sec:flowerplusiet}

Fix the dynamics $E_2:\T \to \T$ to be the doubling map $E_2(x)=2x\mod 1$. In this section, we will complete the goal described in Section 1 of developing a correspondence between a special class of IETs and flowers.

\begin{defn}\label{def:deckshuffler}
    A $2m$-IET, $T:[0,1)\to[0,1)$ is called a \emph{deck-shuffler} IET if the $2m$ intervals $A_1<...<A_m<B_1<...<B_m$ with strictly positive lengths are permuted after the first iteration in the order \[T(B_1)<T(A_1)<T(B_2)<T(A_2)<...<T(B_m)<T(A_m)\] where for two intervals $X<Y$ means $x<y$ for all $x\in X$, $y\in Y$. We denote $A:=A_1\cup ...\cup A_m$, $ B:=B_1\cup...\cup B_m$.
\end{defn}

Deck-shuffler IETs are the candidates for IETs that can be realized in the doubling map. 
Note that a deck-shuffler IET always has an even number of intervals and is defined by having a fixed permutation $(1,...,2m)\mapsto (m+1,1,m+2,2,...,2m,m)$. Choosing a deck-shuffler IET is only choosing the length data. Therefore, we will use the following notation:
\begin{notation}
    $T_l$ is the $2m$ deck-shuffler IET with length data $l=(l_1,...,l_{2m})$ where $l$ is chosen from the simplex $\Big\{ l=(l_1,...,l_{2m}) | l_i>0, \sum_{i=1}^{2m}l_i=1\Big\}$. We will use $T_l$ and $T$ interchangeably depending on whether we want to stress the dependence on $l$.
\end{notation}
The fixed permutation ensures that all deck-shuffler IETs are automatically irreducible (Definition \ref{def:irreducible}). It also ensures that the following is true.

\begin{lemma}\label{lem:leftright}
    $T_l|_A$ is strictly increasing and $T_l|_B$ is strictly decreasing. Furthermore for $x\in A$ we have $T_l(x)>x$ and for $x\in B$ we have $T_l(x)<x$. Therefore, $T_l|_A$ and $T_l|_B$ are each order preserving.
\end{lemma}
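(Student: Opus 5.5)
The plan is to write $T_l$ explicitly on each interval, reading the translation amounts off the fixed permutation $(1,\dots,2m)\mapsto(m+1,1,m+2,2,\dots,2m,m)$, and then read every claim of Lemma \ref{lem:leftright} off these formulas.

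\textbf{Translation amounts.} Write $a_i=|A_i|=l_i$ and $b_i=|B_i|=l_{m+i}$.

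\begin{itemize}
\item The left endpoint of $A_i$ is $a_1+\dots+a_{i-1}$.
\item The left endpoint of $B_i$ is $(a_1+\dots+a_m)+(b_1+\dots+b_{i-1})$.
\item In the image order $T(B_1)<T(A_1)<\dots<T(B_m)<T(A_m)$, the intervals to the left of $T(A_i)$ are $B_1,A_1,\dots,B_{i-1},A_{i-1},B_i$. The intervals to the left of $T(B_i)$ are $B_1,A_1,\dots,B_{i-1},A_{i-1}$.
\end{itemize}

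Since an IET translates each interval rigidly (Definition \ref{def:iet}), subtracting old left endpoints from new ones gives
\[
T(x)=x+(b_1+\dots+b_i)\quad (x\in A_i),\qquad T(x)=x-(a_i+a_{i+1}+\dots+a_m)\quad (x\in B_i).
\]
All lengths are strictly positive. Hence $T(x)>x$ on $A$ and $T(x)<x$ on $B$, which is the second assertion.

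\textbf{Monotonicity.}

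\begin{itemize}
\item On a single interval $T$ is a translation, so it is strictly increasing there.
\item For $x\in A_i$ and $y\in A_j$ with $i<j$, we have $x<y$, and the definition gives $T(A_i)<T(A_j)$, so $T(x)<T(y)$. Thus $T|_A$ is strictly increasing, i.e.\ order preserving.
\item The same argument, using $T(B_i)<T(B_j)$ for $i<j$, shows $T|_B$ is strictly increasing, i.e.\ order preserving. This is the conclusion ``each order preserving'' in the statement.
\end{itemize}

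\textbf{The word ``decreasing''.} Taken literally, ``$T_l|_B$ is strictly decreasing'' would contradict the final sentence of the lemma. The explicit formulas show what the monotone quantities are.

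\begin{itemize}
\item On $A$, the displacement $T(x)-x=b_1+\dots+b_i$ is positive and strictly increasing in $i$.
\item On $B$, the leftward shift $x-T(x)=a_i+\dots+a_m$ is positive and strictly decreasing in $i$.
\end{itemize}

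I would therefore read the first sentence as saying that $A$ moves right by increasing amounts and $B$ moves left by decreasing amounts. I would state this reading and prove it from the displayed formulas, and I would not try to prove the literal ``decreasing'' claim, since that is false.

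\textbf{Main obstacle.} There is no real difficulty. The only care needed is the bookkeeping of which image intervals precede $T(A_i)$ and $T(B_i)$, and checking the translation formula against Definition \ref{def:iet} for the given permutation.
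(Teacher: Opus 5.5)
Your proposal is correct and follows the paper's proof: you use the same explicit translation formulas $T_l(x)=x+|B_1|+\dots+|B_i|$ on $A_i$ and $T_l(x)=x-|A_i|-\dots-|A_m|$ on $B_i$, positivity of the lengths for the displacement signs, and a cross-interval comparison for monotonicity (you take $T(A_i)<T(A_j)$ directly from the deck-shuffler ordering, where the paper compares the translation amounts instead). You are also right that ``$T_l|_B$ is strictly decreasing'' is false as written: the paper's ``similar argument'' for $B$ actually shows that $T_l|_B$ is strictly increasing, and that order preservation is what is used later in Lemma~\ref{lem:codingperiodic} and Proposition~\ref{prop:noncollapsing}.
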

\begin{proof}
    We may explicitly write:
    \[T_l(x) =\begin{cases}
        x+|B_1|+...+|B_i|  \text{ for } x\in A_i\\
        x-|A_{i}|-...-|A_m|  \text{ for } x\in B_i.
    \end{cases} \]
    Let $x,y \in A$ be such that $x<y$. If $x$ and $y$ are in the same $A_i$ then $T_l$ is clearly an increasing isometry. If $x\in A_i$ and $y\in A_j$ then it must be the case that $i<j$. Therefore 
    \[T_l(y)=y+|B_1|+...+|B_i|+...+|B_j| > y+|B_1|+...+|B_i|>x+|B_1|+...+|B_i|=T_l(x).\] A similar argument shows that $T_l|_B$ is decreasing.
    Furthermore it is clear that when $x \in A$, $T_l(x)$ is $x$ plus a positive number thus $T_l(x) > x$ and when $x\in B$, $T_l(x)$ is $x$ minus a positive number thus $T_l(x)<x$.
\end{proof}

\begin{cor}\label{cor:nofixedpoints}
    A deck-shuffler IET has no fixed points.
\end{cor}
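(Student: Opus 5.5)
This follows directly from Lemma \ref{lem:leftright}. The plan is to show that every point of $[0,1)$ is moved strictly, in one direction or the other. By Definition \ref{def:deckshuffler}, the intervals $A_1,\dots,A_m,B_1,\dots,B_m$ partition $[0,1)$. So any $x\in[0,1)$ lies either in $A=A_1\cup\dots\cup A_m$ or in $B=B_1\cup\dots\cup B_m$, and not in both. I would split into these two cases.

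If $x\in A$, say $x\in A_i$, then by the explicit formula in the proof of Lemma \ref{lem:leftright},
\[T_l(x)=x+|B_1|+\dots+|B_i|.\]
All intervals in a deck-shuffler IET have strictly positive length, so this shift is strictly positive and $T_l(x)>x$.

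If $x\in B$, say $x\in B_i$, then
\[T_l(x)=x-|A_i|-\dots-|A_m|,\]
which is strictly less than $x$ for the same reason.

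In either case $T_l(x)\neq x$, so $T_l$ has no fixed points. There is no real obstacle here. The only point worth stressing is that the strict positivity of the lengths, built into Definition \ref{def:deckshuffler}, is exactly what makes the inequalities strict. It is also what distinguishes this from degenerate IETs, which may act as the identity on some interval. The statement already appears as part of Lemma \ref{lem:leftright} ("for $x\in A$ we have $T_l(x)>x$ and for $x\in B$ we have $T_l(x)<x$"), so the corollary can simply cite it.
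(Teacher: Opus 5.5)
Your proposal is correct and follows the paper's route. The paper states the corollary as an immediate consequence of Lemma \ref{lem:leftright}: its explicit translation formulas give $T_l(x)>x$ on $A$ and $T_l(x)<x$ on $B$, with strictness coming from the positive interval lengths, and since $A$ and $B$ partition $[0,1)$, no point is fixed.
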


In the next two subsections we will flesh out the correspondance between $2m-1$ flowers and $2m$ deck shuffler IETs. In section \ref{section:flowertoiet}, for a given flower $F$ together with a $E_2$ invariant measure supported in the flower, we define a function $h_\mu$ that is used to construct a deck-shuffler IET. In section \ref{section:iettoflower}, for a given IET $T_l$, we define a function $H_l$ which is used to construct a flower $F_l$ together with a $E_2$ invariant measure supported inside of $F_l$. Explicit examples will be shown in section \ref{sec:examples}.

\subsection{From Flower to IET}\label{section:flowertoiet}
For this section, we introduce the following notation.

\begin{defn}
    Let $F$ be a $2m-1$ flower with petals $P_1,...,P_{2m-1}$ labeled in cyclic order starting from $0$. Let $\mu_i$ be a $E_2$ invariant measure supported inside $F$. Define the function $h_{\mu_i}:[0,1)\to[0,1)$ to be \[
    h_{\mu_i}(x)=\mu_i([0,x])
    \]
\end{defn}

 We may leave out the index $i$ and just write $\mu$ in the future, but recall that by Corollary \ref{cor:complexitybound} there are at most $i=2m-2$ $E_2$ invariant measures supported in $F$. Additionally we introduce the following notation for later: label the endpoints of each petal as
 \[
 P_i=[p_{i,1},p_{i,2}].
 \]
 For each petal $P_i$, denote by $P_i^*$ the half open version \[P_i^*=[p_{i,1},p_{1,2}).\]
 
 The function $h_\mu$ is a cumulative distribution function for probability measure $\mu$ which means the following automatically holds.

 \begin{prop}
     The function $h_\mu$ satisfies the following:
     \begin{itemize}
         \item $h_\mu$ is right-continuous. It is continuous except for at atoms.
         \item $h_\mu$ is onto $[0,1]$
         \item $h_\mu$ is increasing
         \item $h_\mu$ is strictly increasing (and therefore one-to-one) on $\supp(\mu)\subset F$
     \end{itemize}
 \end{prop}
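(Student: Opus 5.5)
The first three bullets are standard facts about the cumulative distribution function of a Borel probability measure on $[0,1)$, so I will derive them directly from countable additivity and continuity of measure. The fourth bullet uses the fact that $\mu$ is supported in $F$ and is $E_2$-invariant, and I expect it to be the main obstacle.

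\emph{Monotonicity and right-continuity.} For $x<y$ we have $[0,x]\subset[0,y]$, so $h_\mu(x)=\mu([0,x])\le\mu([0,y])=h_\mu(y)$; thus $h_\mu$ is increasing. For right-continuity, take $x_n\downarrow x$. The sets $[0,x_n]$ decrease to $[0,x]$, so continuity from above gives $h_\mu(x_n)\to h_\mu(x)$. For left limits, take $x_n\uparrow x$. The sets $[0,x_n]$ increase to $[0,x)$, so $h_\mu(x^-)=\mu([0,x))$. The jump at $x$ is therefore $h_\mu(x)-h_\mu(x^-)=\mu(\{x\})$, which is nonzero exactly when $x$ is an atom. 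Hence $h_\mu$ is continuous except at atoms.

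\emph{Onto $[0,1]$.} Here I will use right-continuity together with the limits $h_\mu(x)\to \mu([0,1))=1$ as $x\to1^-$ and $h_\mu(0)=\mu(\{0\})$. Given $t\in[0,1]$, set $x_t=\inf\{x: h_\mu(x)\ge t\}$. By right-continuity, $h_\mu(x_t)\ge t$. By monotonicity, $h_\mu(x_t^-)\le t$. So $t$ lies in $[h_\mu(x_t^-),h_\mu(x_t)]$, and $h_\mu$ attains $t$ up to the jump at the atom $x_t$. When $\mu$ has no atoms this gives $h_\mu(x_t)=t$ exactly, by the intermediate value theorem. In the periodic (atomic) case I will read ``onto'' in this sense: every value is covered by the graph with its vertical jump segments filled in.

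\emph{Strictly increasing on $\supp(\mu)$.} Let $x<y$ both lie in $\supp(\mu)$. I need $\mu((x,y])>0$.
\begin{itemize}
\item If $y$ is an atom, then $\mu((x,y])\ge\mu(\{y\})>0$.
\item Otherwise, by Lemma \ref{lemma:nonwandering} applied to $\supp(\mu)$, the point $y$ is not isolated in the support. The aim is then to show that $y$ is accumulated from the left by support points lying in $(x,y)$. Every neighborhood of such a point has positive mass, which gives $\mu((x,y))>0$.
\end{itemize}

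This last step is the delicate one. In the non-atomic case the support is a Cantor set contained in $M_F\subset F$. A point of the support could be the right endpoint of a gap, so I must exclude the possibility that $x$ and $y$ are the two endpoints of a single gap of $\supp(\mu)$.

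My first attempt is to use invariance. Gaps of $\supp(\mu)$ are mapped by $E_2$ onto gaps, except where a gap contains a discontinuity point of the preimage selector. For gaps inside a single petal $P_i$, the endpoints map under $E_2$ to endpoints of an image gap. I will examine whether the measure $\mu((x,y])$ of such a gap can be transported along the orbit until it meets a petal endpoint, and thereby control it.

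If this does not give strict increase at gap endpoints directly, I will fall back on injectivity of $h_\mu$ on the support minus the countably many left gap endpoints. That exceptional set is $\mu$-null and is all that is needed for the measure-theoretic isomorphism used later.
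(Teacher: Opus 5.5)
Your treatment of monotonicity, right-continuity and the jump formula $h_\mu(x)-h_\mu(x^-)=\mu(\{x\})$ is correct. The real problem is the fourth bullet. As stated it is false, so your main plan cannot succeed: that plan uses $E_2$-invariance to rule out that $x<y$ are the two endpoints of a single gap of $\supp(\mu)$.

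If $(x,y)$ is a complementary gap with $x,y\in\supp(\mu)$, then $h_\mu(y)-h_\mu(x)=\mu((x,y])=\mu(\{y\})$. This is a purely measure-theoretic identity, and no transport along orbits can change it. A closed invariant set other than $\T$ has empty interior, so such gaps exist as soon as the support has two points. Whenever the right endpoint is not an atom, the two endpoints have the same image.

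The basic counterexample is an irrational Sturmian measure. Its support is a Cantor set in a half circle, $\mu$ has no atoms, and $h_\mu$ identifies the two endpoints of every gap, exactly as the semiconjugacy of a Sturmian Cantor set onto its rotation collapses gaps. Your step ``$y$ is not isolated, so it is accumulated from the left by support points in $(x,y)$'' is precisely where the argument breaks: the right endpoint of a gap is accumulated only from the right. Lemma~\ref{lemma:nonwandering} is also unnecessary there, since an isolated point of a support is automatically an atom.

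The paper gives no argument to compare with; it only asserts that all four properties hold automatically for a distribution function. So your suspicion about this step is justified. The right response, though, is to exhibit the counterexample and prove a corrected statement, not to try to close the step.

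The correct statement is a sharpened form of your fallback. For $x<y$ in $\supp(\mu)$, one has $h_\mu(x)=h_\mu(y)$ if and only if $(x,y)\cap\supp(\mu)=\emptyset$ and $\mu(\{y\})=0$. Hence $h_\mu$ is injective on $\supp(\mu)$ once you remove the countably many non-atomic \emph{right} gap endpoints, and that exceptional set is $\mu$-null.

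Removing the \emph{left} endpoints, as you propose, gives a $\mu$-null set only when $\mu$ has no atoms. For the measure $\mu_l$ of Example~3, the atom $\frac13$ is the left endpoint of a gap whose right endpoint lies in the Sturmian Cantor set.

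Likewise, state plainly that the second bullet fails whenever $\mu$ has an atom: for a measure on a periodic orbit, $h_\mu$ is a step function taking finitely many values. Your ``jump segments filled in'' version is a correction of the claim, not a reading of it. When $\mu$ has no atoms and $\supp(\mu)$ avoids $0$ (e.g.\ $0\notin F$), $h_\mu$ is continuous with $h_\mu(0)=0$ and $h_\mu\equiv 1$ near $1$. In that case surjectivity onto $[0,1]$ does follow from the intermediate value theorem, as you say.
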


Note additionally the following three results which will be used to prove Theorem \ref{makesiet}. First, Lemma \ref{lem:atomsint} shows that for any flower-supported measure, we can assume that any atoms are in the interior of the flower.

\begin{lemma}\label{lem:atomsint}
   Let $\mu$ be an $E_2$-invariant measure that is supported in some $p$-flower $F$. Then there is a $p$-flower $\tilde{F}$ such that $\supp(\mu) \subset F$ and for any atom $x$ of $\mu$, $x \in int(\tilde{F})$.
\end{lemma}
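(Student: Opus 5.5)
The idea is to perturb the jump discontinuities of the preimage selector $\eta$ defining $F$, so that every atom of $\mu$ sitting on a petal endpoint ends up in the interior of a slightly enlarged petal. The corresponding antipodal petal shrinks by the same amount, and the shrinking must only remove points outside $\supp(\mu)$. (I read the conclusion as $\supp(\mu)\subset\tilde F$.) Only finitely many points are at issue: the petal endpoints are the $2p$ points $\lim_{t\to z_i^\pm}\eta(t)$, $i=1,\dots,p$, and the interior points of $F$ need no change.

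\emph{Step 1: atoms are periodic and their antipodes avoid the support.} Let $x$ be an atom of $\mu$. By invariance, $\mu(\{E_2^n x\})\ge \mu(\{x\})$ for all $n$, because $x\in E_2^{-n}(E_2^n x)$. Since $\mu$ is a probability measure, the orbit of $x$ is finite, so $x$ is eventually periodic. The support of an invariant measure has the non-wandering property (Poincaré recurrence). By the remark after Lemma \ref{lemma:nonwandering}, every pre-periodic point of $\supp(\mu)$ is periodic, so $x$ is periodic.

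Now suppose $x=p_{j,1}$ or $p_{j,2}$ is a petal endpoint, so $E_2(x)=z_i$ for some jump $z_i$. Let $x'=x+\tfrac12$ be the other endpoint of $F$ over $z_i$. Since $x$ is periodic, $x'\neq x$ and $E_2(x')=E_2(x)$, the point $x'$ is strictly pre-periodic. Hence $x'\notin\supp(\mu)$, and since $\supp(\mu)$ is closed there is $r>0$ with $(x'-r,x'+r)\cap\supp(\mu)=\emptyset$.

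\emph{Step 2: move the jump.} Say $\eta$ takes the branch through $x$ on one side of $z_i$ and the branch through $x'$ on the other. I will replace $z_i$ by $z_i'=z_i\pm 2\delta$, moving toward the $x'$ side. This extends the petal containing $x$ beyond $x$ by $\delta$ and shrinks the petal containing $x'$ by $\delta$. Choose $\delta>0$ so that:
\begin{itemize}
\item $\delta<r$, so the removed arc near $x'$ misses $\supp(\mu)$;
\item $2\delta$ is smaller than the distance from $z_i$ to every other jump, so no jumps cross or merge and the number of petals stays $p$;
\item the new endpoint $x\pm\delta$ is not an atom of $\mu$, which is possible because there are only countably many atoms.
\end{itemize}
The other new endpoint, $x'\pm\delta$, lies in $(x'-r,x'+r)$. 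It is therefore not in the support and in particular not an atom.

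The modified map is still a right-continuous preimage selector with $p$ jump discontinuities, and its flower $\tilde F$ contains $\supp(\mu)$. In $\tilde F$ the point $x$ is interior. Repeat this for each of the finitely many jumps whose endpoints carry atoms. Taking each $\delta$ small enough that earlier modifications are not undone, the resulting $\tilde F$ is a $p$-flower containing $\supp(\mu)$ with every atom of $\mu$ in $\operatorname{int}(\tilde F)$.

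\emph{Main obstacle.} The delicate point is Step 1: establishing that the antipodal endpoint $x'$ has a neighbourhood disjoint from $\supp(\mu)$. This relies on the non-wandering property of supports of invariant measures together with the periodicity of atoms. A secondary check is that both endpoints over $z_i$ cannot be atoms simultaneously, which follows from the same argument.
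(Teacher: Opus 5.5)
Your argument follows the paper's route: the atom $x$ is periodic, so its antipode $x'=x+\tfrac12$ is strictly pre-periodic and bounded away from $\supp(\mu)$, and the petal at $x$ is enlarged into that room. Sliding the jump of $\eta$ is more careful than the paper. The paper's $\tilde F$ enlarges $P_i$ but leaves every other petal unchanged, so it has Lebesgue measure exceeding $\tfrac12$; your simultaneous shrinking of the petal at $x'$ is what keeps $\tilde F$ a flower.

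\textbf{The gap} is the step you yourself flag as delicate. The non-wandering property does not exclude strictly pre-periodic points from a support. Lebesgue measure is $E_2$-invariant with support $\T$, every point of $\T$ is non-wandering, and yet $\tfrac12$ is strictly pre-periodic. So the remark after Lemma~\ref{lemma:nonwandering} cannot justify either ``$x$ is periodic'' or ``$x'\notin\supp(\mu)$''. The paper's proof also just asserts the latter.

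Both facts need their own argument, and the second genuinely uses the flower.

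For periodicity, count mass. If $E_2^k x$ lies in a cycle $O$ but $x\notin O$, then $E_2^{-k}(O)\supseteq O\cup\{x\}$, and invariance gives $\mu(O)\ge\mu(O)+\mu(\{x\})$. Taking $O$ to be the cycle of $x$ and using $E_2^{-1}(O)\supseteq O\cup\{x'\}$ likewise gives $\mu(\{x'\})=0$.

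For the neighbourhood, let $q$ be the period of $x$, so $E_2^q(x'+t)=x+2^qt$ for small $|t|$. Since $F\cup(F+\tfrac12)=\T$ and $F\cap(F+\tfrac12)=\partial F$ is finite, for small $t\neq0$ we have $x'+t\in F$ exactly when $x+t\notin F$. Because $x$ is a petal endpoint, this depends only on the sign of $t$. Hence $E_2^q$ maps every point of $F$ close to $x'$, other than $x'$ itself, to a point outside $F$. As $E_2(\supp\mu)\subset\supp\mu\subset F$, none of those points lies in $\supp(\mu)$. So $x'$ would be an isolated point of the support, hence an atom, which it is not. This supplies your radius $r$, and Step~2 then goes through as written.
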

\begin{proof}
    Let $F$ be a flower containing $\supp(\mu)$. If $\mu$ has no atoms then $\tilde{F}=F$. If any atom $x$ is such that $x \in int(F)$ then $\tilde{F}=F$. Suppose that $\mu(\{x\})>0$ and that the petal $P_i$ contains $x$ with $x \in \partial P_i$.
    
   Since $\mu$ is $E_2$-invariant, $x$ must be a periodic point. Therefore, $x+\frac{1}{2}$ is a pre-periodic point, and cannot be in the support of $\mu$. Since the support of a measure is always closed, there is an $\varepsilon>0$ such that $(x+\frac{1}{2}-\varepsilon,x+\frac{1}{2}+\varepsilon)$ does not intersect $F$. Therefore, if $x=p_{i,1}$, we may define a new flower $\tilde{F}$ by defining the new petal $\tilde{P_i}=P_i\cup [x-\frac{\varepsilon}{2},x]$ and the other petals $\tilde{P_j}=P_j$ for $j\neq i$. Similarly if $x=p_{i,2}$ define $\tilde{P_i}=P_i\cup [x,x+\frac{\varepsilon}{2}]$.
\end{proof}

The next two results show why the assumption $\mu(\{0\})=0$ will be useful.

\begin{lemma}\label{lem:simplify}
    Let $\mu$ be an $E_2$-invariant measure and let $F$ be a $p$-flower such that $\supp(\mu)\subset F$. If $\mu(P_i)=0$ for some petal $P_i$, then $\supp(\mu) \subset \tilde{F}$ where $\tilde{F}$ has $p-2$ petals and $P_i \not\subset \tilde{F}$.
\end{lemma}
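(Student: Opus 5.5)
Here is the plan. If $\mu(P_i)=0$, then $\supp(\mu)\cap \operatorname{int}(P_i)=\emptyset$, since $\supp(\mu)$ is closed and any point of $\supp(\mu)$ in the open interval $\operatorname{int}(P_i)$ would give it positive mass. The idea is to delete the petal $P_i$ and absorb it into the two neighbouring gaps, after modifying the preimage selector on the arc $E_2(P_i)$. Concretely, let $P_i=[p_{i,1},p_{i,2}]$ and let $\eta$ be the right-continuous preimage selector of $F$. On the open arc $J:=E_2(\operatorname{int}(P_i))$, $\eta$ selects the branch landing in $P_i$. The antipodal arc $P_i+\frac12$ has interior disjoint from $F$. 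Its endpoints $p_{i,1}+\frac12$ and $p_{i,2}+\frac12$ must be endpoints of adjacent petals of $F$, because $F\cup(F+\frac12)=\T$ forces the complement of $F$ to be exactly $F+\frac12$ away from boundary points. So these endpoints lie on petals $P_{i-}$ and $P_{i+}$, which are the neighbours of the gap $P_i+\frac12$.

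The new selector $\tilde\eta$ agrees with $\eta$ off $J$ and, on $J$, chooses the other branch, landing in $P_i+\frac12$. Its image closure is then
\[
\tilde F=(F\setminus \operatorname{int}(P_i))\cup (P_i+\tfrac12).
\]
The arc $P_i+\frac12$ glues $P_{i-}$ and $P_{i+}$ into a single petal, and $P_i$ disappears. The petal count therefore drops by $2$: we lose $P_i$, and two petals merge into one. Equivalently, the two jump discontinuities of $\eta$ at the endpoints of $J$ are removed, so $\tilde\eta$ has $p-2$ jumps. This matches the parity remark that flowers for $E_2$ have an odd number of petals.

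Next I check that $\tilde\eta$ is a legitimate preimage selector. It picks a genuine preimage everywhere and has only finitely many jumps. I also need that no removable discontinuity is created at the endpoints of $J$; there the two branches now match by construction, so $\tilde\eta$ is actually continuous at those points. Then $\supp(\mu)\subset F\setminus\operatorname{int}(P_i)\subset\tilde F$, and $P_i\not\subset\tilde F$ because its interior has been removed.

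The main obstacle is the degenerate case $p=1$, together with the case where $P_{i-}=P_{i+}$. For $p=1$ the conclusion would say $\mu=0$. Indeed $\mu(F)=1$, so $\mu(P_i)=0$ is impossible when $P_i$ is the only petal, and that case is excluded. When $p=3$, the neighbours $P_{i-}$ and $P_{i+}$ may coincide cyclically, but that is exactly the situation in which the result is a $1$-flower, which is still fine. A second subtlety is the boundary points $p_{i,1}$ and $p_{i,2}$: they may lie in $\supp(\mu)$ with $\mu$ having no mass there, and they remain in $\tilde F$ only if they coincide with endpoints of neighbouring petals. To handle this I would argue that each endpoint of $P_i$ is antipodal to an endpoint of a petal bounding the gap $P_i+\frac12$, and that these endpoints persist in $\tilde F$. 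If necessary, I would use the freedom, as in Lemma~\ref{lem:atomsint}, to shrink or enlarge petals slightly.
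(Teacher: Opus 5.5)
Your construction matches the paper's. You delete $P_i$ and fill in the antipodal gap $P_i+\tfrac12$, which fuses the two petals bordering that gap, so the petal count drops by two. Checking this through the preimage selector, rather than through $\Leb(\tilde F)=\tfrac12$ and non-antipodality of interiors, is a fine alternative.

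The gap is in how you treat the endpoints of $P_i$. You set $\tilde\eta=\eta$ off the open arc $J=E_2(\operatorname{int}(P_i))$. Since $\eta$ is right-continuous, $\tilde\eta(E_2(p_{i,1}))=\eta(E_2(p_{i,1}))=p_{i,1}$. But both one-sided limits of $\tilde\eta$ at $E_2(p_{i,1})$ are now $p_{i,1}+\tfrac12$. So $\tilde\eta$ has a removable discontinuity there, contrary to your claim; it is continuous only at $E_2(p_{i,2})$. Hence $\tilde\eta$ is not a preimage selector. Correspondingly, your set $(F\setminus\operatorname{int}(P_i))\cup(P_i+\tfrac12)$ contains $p_{i,1}$ and $p_{i,2}$ as isolated points, so it is not a flower. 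Your proposed repair is also false. After the merge, $p_{i,1}+\tfrac12$ and $p_{i,2}+\tfrac12$ are interior points of the fused petal. So $p_{i,1}$ and $p_{i,2}$ lie in the interior of $\tilde F+\tfrac12$ and cannot persist in $\tilde F$.

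The missing observation is that there is nothing to repair. The petals are pairwise disjoint closed arcs, so $P_i$ is relatively open in $F$.
\begin{itemize}
\item For any $x\in P_i$, endpoints included, pick an open arc $U\ni x$ with $U\cap F\subset P_i$.
\item Since $\mu(\T\setminus F)=0$, you get $\mu(U)\le\mu(P_i)=0$, so $x\notin\supp(\mu)$.
\item Thus $\supp(\mu)\cap P_i=\emptyset$, and you can remove the whole closed petal.
\end{itemize}
Concretely, switch branches on the half-open arc $E_2(P_i^*)=[E_2(p_{i,1}),E_2(p_{i,2}))$. This keeps $\tilde\eta$ right-continuous and makes it continuous at both $E_2(p_{i,1})$ and $E_2(p_{i,2})$. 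It yields exactly the paper's $\tilde F=(F\setminus P_i)\cup(P_i+\tfrac12)$ with $p-2$ petals; the paper's one-line proof uses this disjointness tacitly.

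Two side remarks. For $p\ge 3$ the two petals bordering $P_i+\tfrac12$ are always distinct, so $p=3$ needs no special treatment. Your remark that $p=1$ is vacuous for a probability measure is correct.
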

\begin{proof}
    We may remove the petal $P_i$ and merge the two petals adjacent to $P_i+\frac{1}{2}$. Indeed let $\tilde{F}=(F\setminus P_i) \cup P_i+\frac{1}{2}$. Note that this $\tilde{F}$ still satisfies the key properties that $\Leb(\tilde{F})=\frac{1}{2}$ and that $int(F)\cap int(F+\frac{1}{2})=\emptyset$ and is therefore still a flower.
\end{proof}

\begin{prop}\label{prop:zero}
    Let $F$ be a flower with $P$ its petal such that $0\in P$. If $\mu(P)>0$ then $\mu(\{0\})>0$.
\end{prop}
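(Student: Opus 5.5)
The plan is to show directly that $\mu(P)=\mu(\{0\})$, which gives the statement immediately. Write the petal as $P=[a,b]$ with $a\le 0\le b$, understood cyclically, so that $P=[a,0)\cup\{0\}\cup(0,b]$. Here $b\ge 0$ and $a\le 0$ are taken as representatives in $(-\tfrac12,\tfrac12)$, which is possible because a petal has length at most $\tfrac12$. Either one-sided piece may be empty if $0$ is an endpoint of $P$. It then suffices to show $\mu((0,b])=0$ and, symmetrically, $\mu([a,0))=0$.

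The key input is the flower property $F\cap(F+\tfrac12)=\partial F$. If $y\in \mathrm{int}(F)$, then $y+\tfrac12\notin F$: otherwise $y\in F\cap(F+\tfrac12)=\partial F$, a contradiction. Now take $S=(0,b]$ with $b>0$. Its preimage under $E_2$ is
\[
E_2^{-1}(S)=(0,\tfrac b2]\cup(\tfrac12,\tfrac12+\tfrac b2].
\]
Since $0<b/2<b$, the set $(0,\tfrac b2]$ lies in $\mathrm{int}(P)\subset \mathrm{int}(F)$. Hence its antipodal copy $(\tfrac12,\tfrac12+\tfrac b2]$ is disjoint from $F$, and so has $\mu$-measure zero because $\supp(\mu)\subset F$.

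Invariance of $\mu$ then gives
\[
\mu((0,b])=\mu(E_2^{-1}(0,b])=\mu((0,\tfrac b2]),
\]
so $\mu((\tfrac b2,b])=0$. Applying the same argument with $b$ replaced by $b/2^n$ (each $(0,b/2^n]$ is still inside $P$) yields $\mu((b/2^{n+1},b/2^n])=0$ for every $n$. Summing over $n$ gives $\mu((0,b])=0$.

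The left side is handled the same way. The set $[a,0)$ has preimage $[\tfrac a2,0)\cup[\tfrac a2+\tfrac12,\tfrac12)$, and the second piece is the antipode of a subset of $\mathrm{int}(P)$, so it misses $F$. This gives $\mu([a,0))=0$. Therefore $\mu(P)=\mu(\{0\})$, and $\mu(P)>0$ forces $\mu(\{0\})>0$.

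The main point requiring care is the bookkeeping of interiors versus endpoints. One must check that $(0,b/2]$ really lies in the interior of $F$, which holds because $b/2<b$ and $0$ belongs to $P$. The endpoint $b$ itself is never needed in the preimage computation. One must also make sure the cyclic representatives are chosen so that the halving maps stay inside $P$.
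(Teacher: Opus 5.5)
Your proof is correct, and it takes a genuinely different route from the paper.

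\textbf{The paper's route.} It argues by contradiction using Poincar\'e recurrence. Assuming $\mu(\{0\})=0$, it picks a recurrent point $x\in P\setminus\{0\}$. Because $0$ is repelling, the orbit of $x$ must leave $P$. At the step just before it first re-enters $P$, the orbit lies in the antipodal branch $[\frac a2+\frac12,\frac b2+\frac12]\subset P+\frac12$ of $E_2^{-1}(P)$, which is essentially outside $F$.

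\textbf{Your route.} You use the same geometric fact: the antipodal branch of the preimage of a one-sided neighborhood of $0$ in $P$ misses $F$. But you apply it at the level of measures rather than pointwise. Invariance gives $\mu((0,b])=\mu((0,b/2])$ and $\mu([a,0))=\mu([a/2,0))$, and shrinking then yields $\mu(P\setminus\{0\})=0$.

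\textbf{What your approach buys.}
\begin{enumerate}
\item It needs no recurrence theorem and no contradiction.
\item It proves more: $\mu(P)=\mu(\{0\})$ exactly, not merely $\mu(\{0\})>0$.
\item It handles the boundary cleanly. When $0$ is an endpoint of $P$, the antipodal point $\frac12$ does lie in $F$. The paper's step ``$\mathrm{int}(P+\frac12)\subset F^c$'' passes over this point. That is harmless there, since preimages of $0$ are $\mu$-null under the contradiction hypothesis, but it is a gap. Your intervals are open at $0$, so their antipodal copies exclude $\frac12$ automatically.
\end{enumerate}

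\textbf{What the paper's approach buys.} Its pointwise version makes the dynamical mechanism more visible: an orbit staying in $F$ that leaves $P$ can never come back.

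\textbf{One cosmetic fix.} Choose the representatives $a,b$ in $[-\frac12,\frac12]$ rather than the open interval. Otherwise you miss the $1$-flower case $P=[0,\frac12]$.
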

\begin{proof}
    Assume for the sake of contradiction that $\mu$ is supported in a flower $F$, that the petal $P$ contains $0$, and that $\mu(P)>0$ but $\mu(\{0\})=0$. By Poincaré Theorem, almost every point in any set of positive measure is recurrent. Therefore, let $x \in P\setminus\{0\}$ be a Poincaré typical point for $\mu$, that is, it is recurrent. Since $x \in \supp(\mu)$ then we must have that its entire forward orbits $\{E_2^n(x) \}_{n\geq0}$ is in $\supp(\mu)$ which is in turn in $F$.

    Since $x\neq 0$ and $0$ is a repelling fixed point, there is some positive integer $k$ such that $E_2^k(x) \notin P$. Let $k$ be the smallest such integer. Since $x$ is recurrent, there is some integer $n>k$ such that $E_2^n(x) \in P$. Let $n$ be the smallest such integer. Then by minimality of $n$ we have that $E_2^{n-1}(x) \notin P$. However note that $E_2^{n-1}(x) \in E_2^{-1}(P)$ since $x\in P$. Therefore, $E_2^{n-1}(x) \in P^c\cap E^{-1}(P)$.

    Since $P$ contains $0$, $E^{-1}(P)$ is exactly two intervals, one of which is strictly contained in $P$ and the other of which is strictly contained in $P+\frac{1}{2}$. Thus if $E_2^{n-1}(x) \in P^c\cap E^{-1}(P)$ we know that $E_2^{n-1}(x) \in P+\frac{1}{2}$. By the definition of a flower $int(P+\frac{1}{2}) \subset F^c$ and so we obtain that $E_2^{n-1}(x) \in F^c$. This is a contradiction with the fact that all points in the forward orbit of $x$ must be contained in $F$. Thus we derive a contradiction and the proposition is proved.
\end{proof}

Assuming an $E_2$-invariant measure does not have an atom at $0$ is a mild assumption since $0$ is a fixed point thus not dynamically interesting. Additionally, Corollary \ref{cor:nofixedpoints} states that there are no fixed points in a deck-shuffler IET, so since the goal is to obtain a deck-shuffler IET from a flower, we must exclude the fixed point of the doubling map.

Additionally, Lemma \ref{lem:simplify} and Proposition \ref{prop:zero} together imply that if $\mu$ is supported in a flower $F$ and $\mu(\{0\})=0$, then $F$ can be simplified by removing the petal containing $0$ without changing the property that $\supp(\mu) \subset F$. Thus we will assume going forward that $0 \notin F$. We now prove our main result of this section.
 
\begin{thm}\label{makesiet}
    Let $\mu$ be a $E_2$ invariant measure supported in a $2m-1$ flower $F$ such that any atoms of $\mu$ fall in the interior of $F$, all petals $P$ of $F$ have positive measure, and $0 \notin F$. Denote $K:=\supp(\mu)\subset F$. Then there is a $2m$ deck-shuffler IET $T_\mu$ that makes the following diagram commute.
    \begin{equation}
    \begin{tikzcd}\label{diagram:2}
    {K} \arrow[r, "E_2"]\arrow[d, "h_\mu"] & {K}\arrow[d, "h_\mu"]\\
    {[0,1)} \arrow[r, "T_\mu"] &{[0,1)}
\end{tikzcd}
    \end{equation}
\end{thm}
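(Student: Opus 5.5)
The strategy is to define $T_\mu$ on $h_\mu(K)$ by $T_\mu(h_\mu(x)) = h_\mu(E_2(x))$ and then show that this map is a deck-shuffler IET whose $2m$ intervals come from how $E_2$ acts on the petals. Since $0\notin F$ and the flower has $2m-1$ petals, I will first locate the petal $P_j$ that contains $\tfrac12$ in its interior. It cannot have $\tfrac12$ as an endpoint: the antipode of an endpoint of a petal is an endpoint of another petal, and $0\notin F$. Splitting $P_j$ at $\tfrac12$ gives $2m$ pieces of $F$. These are the petals $P_1,\dots,P_{j-1}$ together with $P_j\cap[0,\tfrac12)$, which lie in $[0,\tfrac12)$, and $P_j\cap[\tfrac12,1)$ together with $P_{j+1},\dots,P_{2m-1}$, which lie in $[\tfrac12,1)$. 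Parity requires the first group to have $m$ pieces; I will verify this from the antipodal pairing of petal endpoints, i.e. from $X\cup(X+\tfrac12)=\T$ with intersection $\partial X$. Call these pieces $\tilde A_1<\dots<\tilde A_m<\tilde B_1<\dots<\tilde B_m$, and set $A_i=h_\mu(\tilde A_i)$ and $B_i=h_\mu(\tilde B_i)$, taking half-open versions so that the images are intervals. Because $\mu$ has positive mass on every petal and no atom at an endpoint (Lemma \ref{lem:atomsint}), each such image is a half-open interval of length $\mu(\tilde A_i)>0$, and together they tile $[0,1)$ in order.

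Next, on each piece $E_2$ is the affine map $x\mapsto 2x$ (respectively $2x-1$). So for $x,y$ in one piece, $E_2$ sends $[x,y]$ injectively onto an arc, and invariance of $\mu$ together with flower-injectivity gives $\mu(E_2[x,y]) = \mu(E_2^{-1}E_2[x,y]) = \mu([x,y])$. The last equality uses that $E_2^{-1}(E_2[x,y])\cap F$ equals $[x,y]$ up to endpoints of petals, and those endpoints carry no atoms. Consequently $T_\mu$ acts on each $A_i$ and $B_i$ as a translation, and $h_\mu\circ E_2 = T_\mu\circ h_\mu$ holds on $K$. Well-definedness on $h_\mu(K)$ follows because $h_\mu$ is injective on $K$. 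I then extend $T_\mu$ to the gaps by the same translation on each interval.

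The remaining, and main, step is to identify the permutation, namely to show $T(B_1)<T(A_1)<T(B_2)<\dots<T(B_m)<T(A_m)$. The pieces in $[0,\tfrac12)$ are doubled in order into $[0,1)$, and so are the pieces in $[\tfrac12,1)$. The images $E_2(\tilde A_i)$ and $E_2(\tilde B_k)$ have disjoint interiors, since the flower contains no antipodal pair apart from endpoints. Together the images cover the circle, and the antipodal structure makes them alternate. I will argue this by matching endpoints: the right endpoint of $\tilde A_i$ is antipodal to the left endpoint of the next piece in the complementary half circle. I also need to check that the first arc starting at $0$ is $E_2(\tilde B_1)$. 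This holds because $\tilde B_1$ begins at $\tfrac12$, which maps to $0$, while $\tilde A_1$ begins after $0$ since $0\notin F$. Interleaving, plus monotonicity of $h_\mu$, then yields the deck-shuffler order. I expect the main obstacle to be this combinatorial check that the two orders interleave exactly, including the count of $m$ pieces on each side. I would try to establish it by induction along the circle using the endpoint pairing from the flower definition.
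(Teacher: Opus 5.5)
Your route is the paper's route. You use the same $2m$ intervals, obtained by splitting the petal containing $\tfrac12$ at $\tfrac12$, and the same interleaving $E_2(\tilde B_1)<E_2(\tilde A_1)<\dots<E_2(\tilde B_m)<E_2(\tilde A_m)$, read off from the antipodal pairing of petal endpoints. You also use invariance of $\mu$ and the absence of atoms at petal endpoints to see that $h_\mu\circ E_2-h_\mu$ is constant on each $K\cap\tilde A_i$ and $K\cap\tilde B_i$, equal to the deck-shuffler translation. You are in fact more careful than the paper about why $\tfrac12$ is interior to the $m$-th petal.

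The step that fails, in the paper's proof as well, is the final claim that $h_\mu\circ E_2=T_\mu\circ h_\mu$ holds on all of $K$. That claim needs $h_\mu(x)$ to lie in the half-open interval attached to the piece containing $x$. But $h_\mu(x)=\mu([0,x])$ is the \emph{right} endpoint of that interval whenever no point of $K$ lies to the right of $x$ in its piece, whether or not $x$ is an atom. It then lies in the next interval, where $T_\mu$ translates by a different amount, or it equals $1$, outside $[0,1)$.

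Here is a concrete case where all hypotheses of the theorem hold. Take the period-$4$ orbit measure on $\{\tfrac15,\tfrac25,\tfrac35,\tfrac45\}$ and the $3$-flower $[0.15,0.25]\cup[0.35,0.65]\cup[0.75,0.85]$. Each piece contains exactly one point of $K$. For instance $h_\mu(\tfrac15)=\tfrac14\in A_2=[\tfrac14,\tfrac12)$, so $T_\mu(h_\mu(\tfrac15))=\tfrac34\neq\tfrac12=h_\mu(\tfrac25)$. In fact the diagram fails at every point of $K$. Relatedly, the injectivity of $h_\mu$ on $K$ that you invoke is false: the two ends of a gap of $K$ get the same value unless the right one is an atom. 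You do not actually need injectivity once $T_\mu$ is defined as a translation on each interval.

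The repair is to use the left-continuous distribution function $x\mapsto\mu([0,x))$, which sends each atom to the bottom of its jump. Your argument then goes through with $[x,y)$ in place of $[x,y]$. The diagram commutes at every $x\in K$ except the at most $2m$ non-atomic points that are last in their piece. This is a $\mu$-null set, so you get the measure-theoretic conjugacy announced in the introduction.

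One more point that both you and the paper pass over: $\mu(P_m)>0$ does not by itself give $|A_m|>0$ and $|B_1|>0$. Suppose $\mu(P_m\cap[\tfrac12,1))=0$. Beyond $P_m$ the arc $[\tfrac12,\tfrac12+t]$ lies in $P_1+\tfrac12$, outside $F$. So invariance gives $\mu([0,2t])=\mu([0,t])+\mu([\tfrac12,\tfrac12+t])=\mu([0,t])$ for $t$ up to the right end of $P_1\cap[0,\tfrac12)$. Doubling from the left end of $P_1$ then forces $\mu(P_1)=0$, contradicting the hypothesis. The case of $A_m$ follows by the symmetry $x\mapsto -x$.
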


\begin{proof}
    Since $0 \notin F$ then $F$ contains $1/2$ and $P_m$ will always be the petal containing $1/2$. Define $T_\mu$ to be the deck shuffler IET with the following intervals:
    \begin{align*}\label{eq:intervals}
        A_1&=h_\mu(P_1^*),...,A_{m-1}=h_\mu(P_{m-1}^*), A_m=h_\mu(P_{m}\cap [0,\frac{1}{2})),\\&B_1=h_\mu(P_{m}\cap [\frac{1}{2},1)),B_2=h_\mu(P_{m+1}^*),...,B_m=h_\mu(P_{2m-1}^*).
    \end{align*}
    Note that because we take $P_i^*$ and because there are no atoms of $\mu$ at any end points $p_{i,j}$, the definitions of the $A_i,B_i$ give left closed/right open intervals as desired in the definition of deck-shuffler IETs.
We will verify that $T_\mu$ defined by these intervals makes diagram \ref{diagram:2} commute.
    
    The petals of the flower satisfy the antipodal condition:
\[ P_{m}^*-\frac{1}{2}<P_1^*<P_{m+1}^*-\frac{1}{2}<...<P_{2m-1}^*-\frac{1}{2}<P_m^*<P_{1}^*+\frac{1}{2}<...<P_{2m-1}^*<P_{m}^*+\frac{1}{2}\]
where the strict inequality is due to choosing the inclusion of endpoints given by the definition of $P_i^*$.
Therefore, applying the doubling map, the petals satisfy
    \begin{align}
        E_2(P_m^*\cap &[\frac{1}{2},1))<E_2(P_1^*)<E_2(P_{m+1}^*)<E_2(P_2^*)<...<E_2(P_m\cap[0,\frac{1}{2})) \label{property:flowerdoubled}
        \\&\text{ and } \hspace{2mm} \bigcup_{i\in\{1,...,2m-1\}} E_2(P_i^*) = [0,1). \label{property:flowerdoubled2}
    \end{align}
    See Figure \ref{fig:flowerdoubled} for an illustration of this property.

    \begin{figure}
        \centering
        \includegraphics[width=0.6\linewidth]{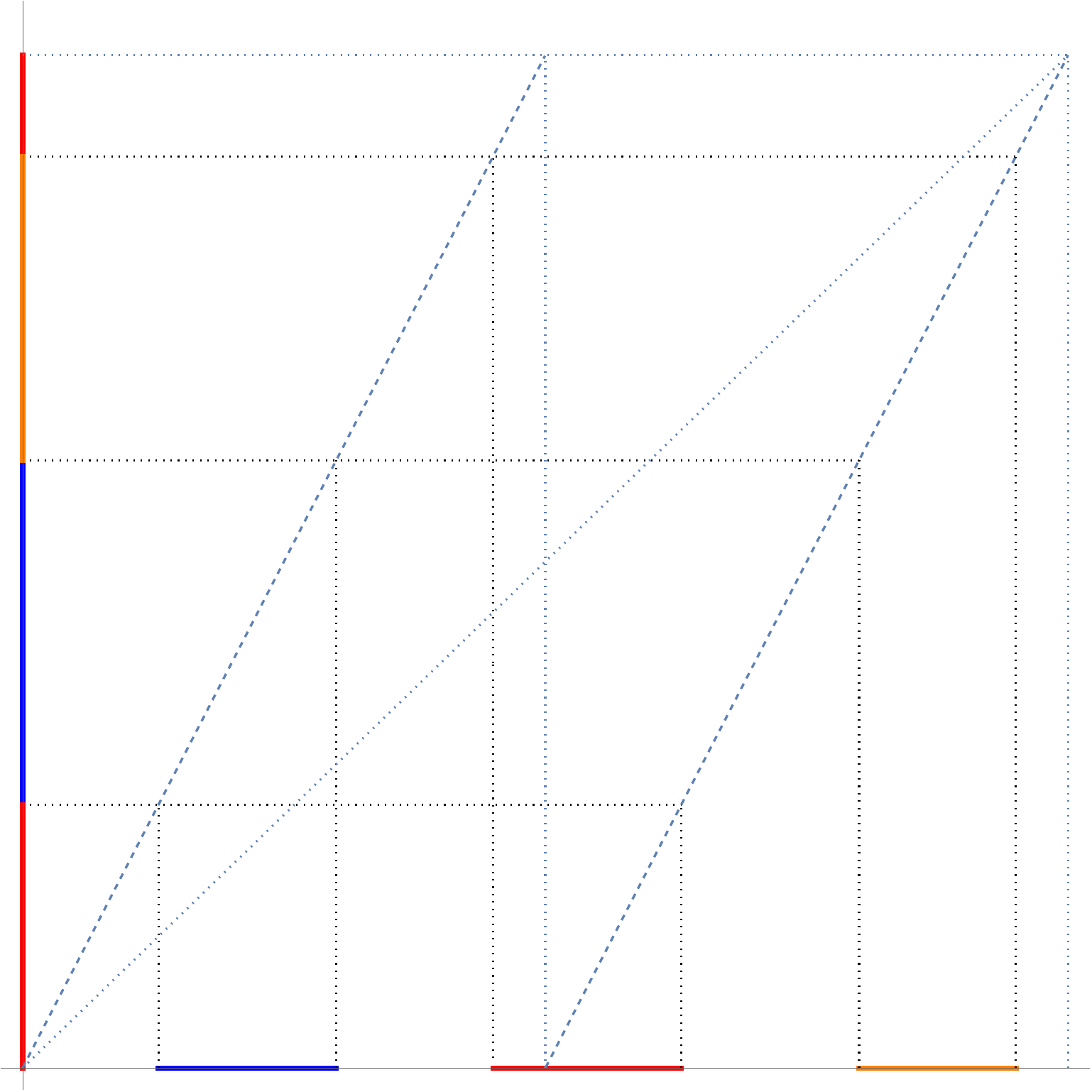}
        \caption{A $3$-flower and its image under $D$}
        \label{fig:flowerdoubled}
    \end{figure}

    For any $x \in K$ we want to verify that $h_\mu(E_2(x))=T_\mu(h_\mu(x))$. First suppose that $x<\frac{1}{2}$ and let $i$ be the index of the petal such that $x \in P_i$. If $x$ is the right endpoint of $P_i$, then it is not an atom and so $h_\mu(x) \in h_\mu(P_i^*)$. Since $x<\frac{1}{2}$ then $h_\mu(P_i^*) = A_i$. By deck shuffling, the explicit definition of $T_\mu$ therefore gives
    \begin{equation}\label{eq:LHS}
        T_\mu(h_\mu(x))=h_\mu(x)+|B_1|+...+|B_i|
    \end{equation}
    where $|B_i|$ is the length of interval $B_i$. By definition of the IET and of $h_\mu$ as a cumulative distribution function, 
    \begin{align*}
        |B_i|=|h_\mu&(P_{m+i-1}^*)|=\mu([0,p_{m+i-1,2}))-\mu([0,p_{m+i-1,1}))\\&=\mu([p_{m+i-1,1},p_{m+i-1,2}))=\mu(P_{m+i-1}^*)=\mu(P_{m+i-1}) \hspace{3mm}\text{ for }\hspace{2mm}i>1
    \end{align*}
    where the last equality is because $p_{m+i-1,2}$ is not an atom. Similarly
    \begin{align*}
        |B_1|=|h_\mu(P_{m}^*\cap [\frac{1}{2},1))|=\mu(P_m\cap[\frac{1}{2},1)).
    \end{align*}

    On the other hand, for $x \in P_i$, property (\ref{property:flowerdoubled}) implies that $E_2(x)>E_2(y)$ for $y \in P_1,...,P_{i-1},P_{m}\cap[\frac{1}{2},1),P_{m+1},...,P_{m+i-2},P_{m+i-1}^*$ and moreover adding property (\ref{property:flowerdoubled2}) to this, 
    \begin{align} \label{eqn:dofx}    
        E_2(x)=|E_2(P_{m}\cap[\frac{1}{2},1))|+|E_2(P_1)|+|E_2(P_{m+1})|+|E_2(P_2)|+...\\+|E_2(P_{m+i-1})|+|E_2([p_{i,1},x])|.\nonumber
    \end{align}
    Here $|\cdot|$ is the usual absolute value. Since $\mu$ is $D$ invariant, applying $h_\mu$ to this equality (\ref{eqn:dofx}), we obtain
    \begin{align*}
       h_\mu&(E_2(x))=
       \\ \mu(&[0,|E_2(P_{m}\cap[\frac{1}{2},1))|+|E_2(P_1)|+|E_2(P_{m+1})|+|E_2(P_2)|+...\\&+|E_2(P_{m+i-1})|+|E_2([p_{i,1},x])|])=
       \\\mu(&[0,|E_2(P_m\cap[\frac{1}{2},1)|])+\mu([|E_2(P_m\cap[\frac{1}{2},1)|,|E_2(P_1)|])+...\\&+\mu([|E_2(P_{m+i-1})|,|E_2([p_{i,1},x])|])=
       \\ \mu(&E_2(P_m\cap[\frac{1}{2},1)))+\mu(E_2(P_1))+\mu(E_2(P_{m+1}))+\mu(E_2(P_2))+...\\&+\mu(E_2(P_{m+i-1}))+\mu(E_2([p_{i,1},x]))=
       \\\mu(&P_m\cap[\frac{1}{2},1))+\mu(P_1)+\mu(P_{m+1})+\mu(P_2)+...+\mu(P_{m+i-1})+\mu([p_{i,1},x])= 
       \\\mu(&[0,x))+\mu(P_{m}\cap[\frac{1}{2},1))+\mu(P_{m+1})+...+\mu(P_{m+i-1}).
    \end{align*}
    
    Since $|B_i|=\mu(P_{m+i-1})$, this is exactly equal to (\ref{eq:LHS}). If $x >1/2$ a similar argument holds this time using that for $h_\mu(x) \in B_i$, the IET definition is $T_\mu(h_\mu(x))=h_\mu(x)-|A_i|-...-|A_{m}|$.
\end{proof}

\subsection{From IET to Flower}\label{section:iettoflower}

\subsubsection{Definitions and Lemmas}

\begin{defn}
    For a deck shuffler $T_l$, the map $\mathcal{H}_l: [0,1) \to \Sigma^+_2 = \{(x_i)_{i=0}^\infty | x_i \in \{0,1\} \}$ is defined by
    \[\mathcal{H}_l(x) = (x_n)_{n \in \Z_{>0}} \text{ where } x_n=\chi_B(T_l^nx).
    \]
\end{defn}

The map $\mathcal{H}_l$ is a coding of the IET in two symbols by the partition $A,B$ instead of the natural coding in $2m$ symbols $\Phi$ as defined in Definition \ref{def:naturalcoding}. Instead, $\mathcal{H}_l$ is a composition of $\Phi$ with the map $\Psi:X_{2m}^+\to X_2^+$ defined by
   \begin{align*}
        \{0,...,m-1\} \mapsto 0 \\
        \{m,...,2m-1\} \mapsto 1.
    \end{align*}
The substitution map $\Psi$ intertwines the dynamics of the two shifts $\sigma_2$ and $\sigma_{2m}$ on $X_2^+$ and $X_{2m}^+$ respectively and sends a cylinder of length $n$ in $X_{2m}^+$ to a cylinder of length $n$ in $X_2^+$. Lemma \ref{lem:codingcommutes} is an obvious consequence of $\mathcal{H}_l$ being defined as a coding map.

\begin{lemma}\label{lem:codingcommutes}
The coding $\mathcal{H}_l$ intertwines the IET $T_l$ and the shift map $\sigma$. That is, the following diagram commtues.
    \begin{equation}
    \begin{tikzcd}\label{diagram:1a}
    {X^+_2} \arrow[r, "\sigma"] & {X^+_2}\\
    {[0,1)}\arrow[u, "\mathcal{H}_l"] \arrow[r, "T_l"] &{[0,1)}\arrow[u, "\mathcal{H}_l"]
\end{tikzcd}
    \end{equation}
\end{lemma}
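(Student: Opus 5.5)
The plan is to verify the identity $\mathcal{H}_l(T_l x)=\sigma(\mathcal{H}_l(x))$ coordinatewise for every $x\in[0,1)$, directly from the definition of $\mathcal{H}_l$ as an itinerary map. Before doing so I will fix the indexing: the definition writes the sequence as $(x_n)_{n\in\Z_{>0}}$, while the target is $\Sigma_2^+$, whose coordinates are indexed by $\Z_{\geq 0}$. I will read the sequence as indexed by $n\geq 0$, so that $x_0=\chi_B(x)$. The argument works the same way under either convention, since shifting indices only changes which coordinate comes first.

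Fix $x\in[0,1)$ and write $\mathcal{H}_l(x)=(x_n)_{n\geq0}$ with $x_n=\chi_B(T_l^n x)$. By definition of the shift, the $n$-th coordinate of $\sigma(\mathcal{H}_l(x))$ is
\[
x_{n+1}=\chi_B(T_l^{n+1}x).
\]
The $n$-th coordinate of $\mathcal{H}_l(T_l x)$ is
\[
\chi_B\bigl(T_l^n(T_l x)\bigr)=\chi_B(T_l^{n+1}x).
\]
These agree for every $n$, so the two sequences are equal and the diagram commutes.

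An alternative route is to factor $\mathcal{H}_l=\Psi\circ\Phi$, where $\Phi$ is the natural coding from Definition \ref{def:naturalcoding}. For this I would relabel the intervals $A_1,\dots,A_m,B_1,\dots,B_m$ as $I_1,\dots,I_{2m}$, so that $\Psi$ sends the $A$-indices to $0$ and the $B$-indices to $1$. Then $\Phi$ intertwines $T_l$ with $\sigma_{2m}$ by diagram (\ref{diagram:naturalcoding}), and $\Psi$ intertwines $\sigma_{2m}$ with $\sigma_2$ because it acts letter by letter. Composing the two commuting squares gives the result.

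There is no real obstacle here. The only care needed is the index convention noted above and making sure that $\Psi$ matches the labelling of the intervals, so that $A$ maps to $0$ and $B$ maps to $1$.
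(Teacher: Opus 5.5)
Your proposal is correct and matches the paper, which treats the lemma as an immediate consequence of $\mathcal{H}_l$ being an itinerary coding, equivalently the composition $\Psi\circ\Phi$ of the natural coding with the letter-by-letter map $\Psi$ that intertwines the shifts. Your coordinatewise check $\chi_B(T_l^n(T_l x))=\chi_B(T_l^{n+1}x)$ and your remark on the index convention simply make that argument explicit.
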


{The following two lemmas discuss how the dynamics and the coding interact. Lemma \ref{lem:codingperiodic} in particular will be used in the proof of Theorem \ref{thm:H_l}.}

\begin{lemma}\label{lem:noabfixed}
    The image $\mathcal{H}_l$ contains no points that are fixed by the shift $\sigma$. Equivalently, for all $x\in [0,1)$, its orbit under $T_l$ must visit both $A$ and $B$.
\end{lemma}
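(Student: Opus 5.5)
The plan is to first establish the equivalence stated in the lemma, and then prove the dynamical statement using Lemma \ref{lem:leftright}.

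\emph{Equivalence.} The fixed points of $\sigma$ on $X_2^+$ are the constant sequences $000\ldots$ and $111\ldots$. By Lemma \ref{lem:codingcommutes}, $\sigma^k(\mathcal{H}_l(x))=\mathcal{H}_l(T_l^k x)$, so the image of $\mathcal{H}_l$ contains a fixed point exactly when some orbit of $T_l$ stays entirely in $A$ or entirely in $B$. It therefore suffices to show that no $x\in[0,1)$ satisfies $T_l^n x\in A$ for all $n\ge 0$, and likewise for $B$. Because the statement refers to $\mathcal{H}_l$ applied to points, ``visits both'' here is taken to mean that every forward orbit meets both $A$ and $B$.

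\emph{Orbits cannot stay in $A$.} Suppose $T_l^n x\in A$ for all $n$. By Lemma \ref{lem:leftright}, each application of $T_l$ on $A$ adds the positive amount $|B_1|+\dots+|B_i|\ge |B_1|=:\beta>0$. Hence $T_l^n x\ge x+n\beta$, which exceeds $1$ for large $n$ and contradicts $T_l^n x\in[0,1)$.

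\emph{Orbits cannot stay in $B$.} Symmetrically, on $B_i$ the map $T_l$ subtracts $|A_i|+\dots+|A_m|\ge |A_m|=:\alpha>0$. Hence $T_l^n x\le x-n\alpha$, which eventually becomes negative, again a contradiction.

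Both cases rest on the uniform positive lower bounds $\beta$ and $\alpha$ for the displacement, which come from the explicit formula for $T_l$ given in the proof of Lemma \ref{lem:leftright}, together with the strict positivity of all interval lengths in Definition \ref{def:deckshuffler}. This strengthens Corollary \ref{cor:nofixedpoints}. I expect no real obstacle; the only point needing care is the indexing mismatch between $(x_n)_{n\in\Z_{>0}}$ and $X_2^+$, which is handled by applying the argument to $T_l x$ in place of $x$.
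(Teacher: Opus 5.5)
Your proof is correct and follows the paper's argument. On $A$ the map $T_l$ moves points strictly to the right and on $B$ strictly to the left (Lemma \ref{lem:leftright}), so an orbit confined to one of them would eventually leave $[0,1)$; your uniform displacement bounds $|B_1|$ and $|A_m|$ are a sharper version of the paper's remark that only finitely many lengths can be added at each step.
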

\begin{proof}
    By Corollary \ref{cor:nofixedpoints}, we know that no points $x\in [0,1)$ are fixed under $T_l$. However, we additionally claim that no points $x\in [0,1)$ can remain only in $A$ or only in $B$ under the dynamics of $T_l$. Indeed, supose that $x\in A$. By Lemma \ref{lem:leftright}, $T_l^{i+1}(x)>T_l^i(x)$ whenever $T^i(x) \in A$. Therefore, since there are only finitely many lengths that can be added by applications of $T_l$, the orbit of $x$ must eventually land in $B$. Similarly, a point $x$ that starts in $B$ must eventually land in $A$. Therefore the sequences $0^\infty$ and $1^\infty$ do not appear in the image of $\mathcal{H}_l$.
\end{proof}

\begin{lemma}\label{lem:codingperiodic}
   A point $x \in [0,1)$ is periodic with respect to $T_l$ with minimal period $n$ if and only if the $A,B$ coding $\mathcal{H}_l(x)$ is a periodic sequence with minimal period $n$.
\end{lemma}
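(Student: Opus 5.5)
The forward direction is immediate from Lemma \ref{lem:codingcommutes}. If $T_l^n(x)=x$, then $\sigma^n(\mathcal{H}_l(x))=\mathcal{H}_l(T_l^n x)=\mathcal{H}_l(x)$, so the coding is periodic with some period dividing $n$. The real content is therefore twofold: the converse implication, and the claim that the \emph{minimal} periods agree. Both reduce to one injectivity-type statement:
\[
\text{if } \mathcal{H}_l(x)=\mathcal{H}_l(y) \text{ for } x,y\in[0,1), \text{ then } x=y .
\]
Granting this, suppose $\mathcal{H}_l(x)$ is periodic with minimal period $k$. Then $\mathcal{H}_l(T_l^k x)=\sigma^k\mathcal{H}_l(x)=\mathcal{H}_l(x)$, so $T_l^kx=x$. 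Conversely, if $x$ has minimal period $n$ and its coding has minimal period $k$, the previous argument gives $T_l^k x=x$, so $n\mid k$. Combined with $k\mid n$ from the forward direction, this yields $k=n$.

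The plan for the injectivity statement is to use the order structure from Lemma \ref{lem:leftright}: $T_l|_A$ and $T_l|_B$ are each strictly increasing. Suppose $x<y$ have the same $A,B$ itinerary. At every time the two points lie in the same set, $A$ or $B$, so by Lemma \ref{lem:leftright} the order $T_l^j x<T_l^j y$ is preserved for all $j$.

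One natural next step is to argue via lengths. On a single subinterval $A_i$ or $B_i$ the map $T_l$ is an isometry. If $x$ and $y$ also lie in the same $2m$-symbol interval at each step, then $[x,y]$ is mapped isometrically forever. This says $[x,y]$ is a wandering or periodic interval, which is not immediately a contradiction for a general IET. So the argument has to use the two-letter coding specifically.

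The key observation for that is the following. If $x\in A_i$ and $y\in A_j$ with $i<j$, then the jump $|B_1|+\dots+|B_j|$ exceeds $|B_1|+\dots+|B_i|$, so the gap $T_l y-T_l x$ strictly increases. Symmetrically, if $x\in B_i$, $y\in B_j$ with $i<j$, then $T_l$ subtracts $|A_i|+\dots+|A_m|$ from $x$ but only $|A_j|+\dots+|A_m|$ from $y$, so the gap again strictly increases. Thus, whenever the points lie in different $2m$-intervals, the gap grows by at least $\min_k l_k>0$, and otherwise it stays constant. Since the gap is bounded by $1$, the two points can be separated into different $2m$-intervals only finitely often. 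After some time $N$ the points $T_l^Nx<T_l^Ny$ share the full $2m$-symbol itinerary, and the interval $J=[T_l^Nx,T_l^Ny]$ moves rigidly forever.

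It remains to rule out such a rigid interval $J$, and I expect this to be the main obstacle. The approach I would try first is to show that in a deck-shuffler no interval $J$ can stay inside a single $A_i$ or $B_i$ under all iterates. Since $T_l$ preserves Lebesgue measure and each iterate of $J$ is an interval of the same length, the iterates cannot all be disjoint, so $T_l^a J\cap T_l^b J\neq\emptyset$ for some $a<b$. Rigid translation then makes $T_l^{b-a}$ act as a translation on an overlapping interval, which forces a periodic interval, and on it some point is periodic with the same coding.

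This is exactly the residual case the lemma must address: it is only needed when $x$ (or $\mathcal{H}_l(x)$) is periodic, not for arbitrary pairs. So instead of proving full injectivity, I would restrict to the situation of the lemma. Let $x$ and $y=T_l^k x$ have equal codings, where $k$ is the coding period. The gap argument, applied to the periodic sequence, shows that the gap between $T_l^jx$ and $T_l^jy$ is eventually constant. Periodicity of the coding then shows the gap is constant for all $j$, and equal to a signed quantity $c$ satisfying $T_l^{j+k}x=T_l^jx+c$ for all $j$. Iterating gives $T_l^{rk}x=x+rc$, and boundedness of $[0,1)$ forces $c=0$. Hence $T_l^kx=x$, which completes the converse.
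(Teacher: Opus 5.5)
Your final argument is essentially the paper's proof: you compare $x$ with $T_l^kx$, which has the same $A,B$ coding, and use that $T_l$ is order-preserving on $A$ and on $B$, so the gap $T_l^{j+k}x-T_l^jx$ keeps its sign and never shrinks; boundedness of $[0,1)$ then forces $T_l^kx=x$. The paper phrases the same idea as a monotone sequence $x<T_l^nx<T_l^{2n}x<\cdots$ with increments bounded below, and your divisibility argument for the minimal periods matches the paper's.

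Drop the detours, though. Unrestricted injectivity of $\mathcal{H}_l$ is false (in Example~1 every point of $A_2$ has coding $\overline{01100}$), so restricting to the pair $(x,T_l^kx)$ is necessary, not just convenient. Also, periodicity of the coding does not by itself make the gap constant for all $j$. Instead, from eventual constancy you get $c=0$, and then monotonicity of $|T_l^{j+k}x-T_l^jx|$ (or injectivity of $T_l$) gives $T_l^kx=x$.
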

\begin{proof}

     Suppose that $x \in [0,1)$ has periodic $A,B$ coding given by $\mathcal{H}_l(x)=\overline{x_1,...,x_n} \in \{0,1\}^\N$  where $n$ is minimal. Additionally suppose for the sake of contradiction that $T^n(x)\neq x$. By Lemma \ref{lem:noabfixed} we may also suppose without loss of generality that $x\in A$.
    
     Suppose first that $T^n(x)>x$. Consider iterations of $x$ of length of the symbolic cycle period, that is iteration of $n$ steps at a time. Note that in order to preserve the periodic itinerary with respect to $A,B$, we must have that $T^i(x)$ and $T^{n+i}(x)$ are always either both in $A$ or both in $B$. By Lemma \ref{lem:leftright}, $T$ preserves order inside of $A$ and inside of $B$. Therefore since $T^n(x)>x$, after another $n$ iterations, we get that $T^{2n}(x) > T^n(x)$. In general, we obtain a sequence
    \[
    x < T^n(x)<...<T^{n\cdot j}(x)<....
    \]
    Moreover, the distances between each point $T^{n\cdot j}(x)$ in the sequence cannot go to zero since there are only finitely many lengths $l_i$ that $T_l$ may add or subtract at each step. Thus, there is a first time $j$ such that $T^{n\cdot j}(x)$ is in $B$, which contradicts that $T^{n(j-1)}(x)$ and $T^{nj}(x)$ are both in $A$ or both in $B$.

    If we have $T^n(x)<x$ instead, then since there is some $k\in \N$ such that $T^k(x) \in B$, and since $T^n(x), x$ must have the same $A,B$ coding, we have that $T^{n+k}(x)$ and $T^k(x)$ are both in $B$ and the order $T^{n+k}(x) <T^k(x)$ is preserved. Therefore a symmetric argument to before shows that $T^{l(n+k)}(x)$ will eventually land in $A$, and the same contradiction as before arises.
    
    Assuming $T^n(x) \neq x$ derives a contradiction, therefore we must have that $T^n(x)=x$. If there were another integer $m<n$ such that $T^m(x)=x$ then clearly $m$ would also be a period of the symbolic sequence $\mathcal{H}_l$, hence $n$ would not be minimal.

    For the converse direction, suppose that $T^n(x)=x$ and $n$ is the minimal period. Since $x$ is periodic, then it is clearly periodic with respect to the $A,B$ partition. Therefore its image under $\mathcal{H}_l$ is $\overline{x_1...x_m}$ where $m|n$. If $m<n$ then the argument above shows that $x$ must actually be periodic with period $m$, contradicting the minimality of $n$.
        
\end{proof}

Next we define the function $H_l$ which is the coding $\mathcal{H}_l$ composed with the map that takes a sequence in $\{0,1\}^{\Z_{\geq 0}}$ to its binary representation in $[0,1)$.

\begin{defn}\label{def:H_l}
    For a deck-shuffler IET $T_l$, define a map $H_l=H_{T_l}:[0,1)\to[0,1)$ as follows:
\[H_l(x)=\sum_{n=0}^\infty\frac{\chi_B(T_l^nx)}{2^{n+1}}\] where $B=B_1\cup...\cup B_m$.
\end{defn}
The sum is convergent, and for a given $x$, each point in its orbit is either in $A=A_1\cup ...\cup A_m$ or in $B$ because of the choice that intervals are closed on the left and open on the right. Therefore $H_l$ is well defined.

Finally, we know that any IET $T$ has at least one invariant measure and possibly more than one. Recall that in particular Lebesgue measure is always an invariant measure of an IET. The following definition introduces notation for the pushforward measure under $H_l$.
\begin{defn}
    For any invariant measure $\nu_i$ of $T_l$, define $\mu_i:=H_{l*}\nu_i$. Denote the pushforward of Lebesgue as $\mu_l:=H_{l*}\Leb$.
\end{defn}

\subsubsection{Theorems}
Equipped with the above lemmas, we may now present and prove our main theorems. First, Theorem \ref{thm:H_l} describes a complete picture of the structure and key properties of $H_l$.

\begin{thm}\label{thm:H_l}

    The map $H_l$ satisfies the following:
    \begin{enumerate}[label=(\alph*)]
        \item \label{property:intertwine} $H_l$ intertwines the dynamics, ie the following diagram commutes
\begin{equation}
    \begin{tikzcd}\label{diagram:1}
    {[0,1)} \arrow[r, "E_2"] & {[0,1)}\\
    {[0,1)}\arrow[u, "H_l"] \arrow[r, "T_l"] &{[0,1)}\arrow[u, "H_l"]
\end{tikzcd}
    \end{equation}
        \item \label{property:invpushforward} $\mu_l=H_{l*}\Leb$ is $E_2$ invariant.
        \item \label{property:semicont} $H_l$ is right-continuous.
        \item \label{property:inc} $H_l$ is an increasing function.
        \item \label{property:plateaus} 
        The interval $[a,b]$, where $a<b$, is a plateau of $H_l$, that is $[a,b]=H^{-1}_l(\{z\})$, if and only if the measure $\mu_l$ has an atom at $z$. Furthermore, if the interval $[a,b]$ is a plateau, then there exists a natural number $n$ such that $T_l^n(x)=x$ for all $x\in [a,b]$. Conversly, if there is an interval $[a,b]$ with $a<b$ such that $T_l^n(x)=x$ for all $x\in [a,b]$ and some positive integer $n$, then $[a,b]$ is finitely many plateaus.
        \item \label{property:minimal} If $T_l$ is minimal then $H_l$ is strictly increasing. In that case $H_l$ is a right continuous embedding of the deck-shuffler IET $T_l$ into the doubling map $E_2$. 
    \end{enumerate}
\end{thm}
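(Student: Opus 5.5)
I will prove the six properties roughly in the listed order, since the later ones use the earlier ones.

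For \ref{property:intertwine}, $H_l$ is the coding $\mathcal{H}_l$ followed by the binary-expansion map $\pi(x_0x_1\dots)=\sum x_n2^{-(n+1)}$. By Lemma \ref{lem:codingcommutes}, $\mathcal{H}_l\circ T_l=\sigma\circ\mathcal{H}_l$. On sequences that do not end in $1^\infty$ we also have $\pi\circ\sigma=E_2\circ\pi$. By Lemma \ref{lem:noabfixed} no itinerary is eventually $1^\infty$: that lemma is stated for points, but it applies to $T_l^k(x)$ for every $k$. So $\pi$ is injective on the image and $H_l\circ T_l=E_2\circ H_l$. Property \ref{property:invpushforward} then follows formally: for Borel $S$,
\[
\mu_l(E_2^{-1}S)=\Leb(H_l^{-1}E_2^{-1}S)=\Leb(T_l^{-1}H_l^{-1}S)=\Leb(H_l^{-1}S),
\]
using the $T_l$-invariance of Lebesgue.

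For \ref{property:semicont}, I will show that each coordinate $\chi_B(T_l^n x)$ is right-continuous. This holds because $T_l$ is right-continuous and its pieces are translations of left-closed, right-open intervals, so for $y$ slightly to the right of $x$ the first $n$ iterates of $y$ stay in the same intervals as those of $x$. A uniformly convergent sum of right-continuous functions is right-continuous.

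The monotonicity \ref{property:inc} is the main obstacle. Take $x<y$ and let $n$ be the first index where the codes differ. I must show $T_l^n x\in A$ and $T_l^n y\in B$, which gives $H_l(x)<H_l(y)$ because the remaining tail contributes at most $2^{-(n+1)}$. This is exactly the content of Lemma \ref{lem:leftright}: $T_l$ is increasing on $A$ and on $B$. So while $x$ and $y$ share the same $A/B$ symbol their order is preserved, i.e.\ $T_l^k x<T_l^k y$ for $k\le n$. At time $n$ they have different symbols, and every point of $A$ lies to the left of every point of $B$ (since $A_1<\dots<A_m<B_1<\dots$). Therefore the smaller point $T_l^nx$ lies in $A$. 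Equality of codes gives equality of $H_l$.

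For \ref{property:plateaus}, $H_l$ is monotone, so each fibre $H_l^{-1}(z)$ is an interval, and $\mu_l(\{z\})$ equals its Lebesgue length. Hence a fibre is nondegenerate exactly when $\mu_l$ has an atom at $z$; right-continuity makes the fibre of the form $[a,b)$ or $[a,b]$. Suppose the fibre is nondegenerate. All its points share an itinerary, and $T_l$ maps it isometrically into a fibre of the same kind; by \ref{property:intertwine} it lands in the fibre over $E_2(z)$. The atoms of an invariant probability measure have finite orbits, so $z$ is $E_2$-periodic, which makes the common code periodic. Lemma \ref{lem:codingperiodic} then gives $T_l^n x=x$ for every $x$ in the fibre. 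Conversely, if $T_l^n$ is the identity on $[a,b]$, each point has a periodic code of period dividing $n$, so $H_l([a,b])$ lies in the finite set of $2^{-n}$-periodic binary points. Monotonicity splits $[a,b]$ into finitely many plateaus.

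For \ref{property:minimal}, if $H_l$ were not strictly increasing, then by \ref{property:inc} it would be constant on some $[a,b)$ with $a<b$. By the argument above, points of this interval would be periodic, contradicting minimality. A strictly increasing, right-continuous, intertwining map is the claimed embedding.
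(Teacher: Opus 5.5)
Your proof is correct. Parts (a)--(c) and (e) follow the paper closely, but (d) and especially (f) take a different route.

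In (d) you use the order-preservation of $T_l$ on $A$ and on $B$ directly; the paper's bookkeeping with the times $N_1$ and $N_0$ amounts to the same thing. You also get strictness: distinct codes give distinct values, because by Lemma \ref{lem:noabfixed} the tail of the code of $x$ after the first disagreement is not $1^\infty$. State this explicitly, since ``the tail contributes at most $2^{-(n+1)}$'' by itself only yields $\le$.

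For (f), the paper uses density of orbits (the $\varepsilon$-approach to the $A/B$ boundary point) to force the codes of $x<y$ to split in finite time. It then invokes minimality again to exclude constant tails. You instead reduce (f) to (e): if $H_l$ is not strictly increasing there is a nondegenerate fibre, hence an atom of $\mu_l$, hence an interval of $T_l$-periodic points, which a minimal system cannot have. Your route buys three things:
\begin{enumerate}[label=(\roman*)]
\item it is shorter and avoids the delicate approach argument;
\item it shows that the constant-tail case, which the paper excludes via minimality, never occurs for any deck-shuffler;
\item it proves more: $H_l(x)=H_l(y)$ with $x<y$ forces $[x,y]$ to consist of periodic points, so $H_l$ is strictly increasing whenever $T_l$ has no periodic points.
\end{enumerate}
The cost is that (f) now rests on Lemma \ref{lem:codingperiodic}, which the paper's proof of (f) does not use.

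Finally, your remark that $T_l$ maps a plateau isometrically into a plateau is stated without justification. You would have to rule out a plateau straddling an $A_i/A_{i+1}$ or $B_i/B_{i+1}$ boundary. Nothing downstream uses it, so it can simply be dropped.
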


\begin{proof} $ $\newline
\noindent\textit{Proof of \ref{property:intertwine}:} 
    We check that $H_l$ intertwines the dynamics. Indeed:
    \begin{align*}
        E_2(H_l(x))&=2\sum_{n=0}^\infty\frac{\chi_B(T^n(x))}{2^{n+1}}=\sum_{n=0}^\infty\frac{\chi_B(T^n(x))}{2^{n}}=\sum_{n=1}^\infty\frac{\chi_B(T^n(x))}{2^{n}}+\frac{\chi_B(x)}{2^0}=\\
        &\sum_{n=1}^\infty\frac{\chi_B(T^n(x))}{2^{n}} + 0\mod1 = H_l(T(x))
    \end{align*}
    so the diagram commutes. Note as an alternate proof that property \ref{property:intertwine} follows from Lemma \ref{lem:codingcommutes}.
    
    \vspace{1.5mm}

    \noindent\textit{Proof of \ref{property:invpushforward}:} This property now follows from proerty \ref{property:intertwine}. Indeed, more generally, for the pushforward measure $\mu_i=H_{l*}\nu_i$ of any $T_l$ invariant measure $\nu_i$, we have
    \begin{align*}
    E_{2*}\mu_i=E_{2*}H_{l*}\nu_i=(E_2\circ H_l)_*\nu_i&=(H_l\circ T_l)_*\nu_i\\
    &=H_{l*}T_{l*}\nu_i=H_{l*}\nu_i=\mu_i.
\end{align*}

    \vspace{1.5mm}

  \noindent\textit{Proof of \ref{property:semicont}:}
First note that by definition of the IET as permuting left closed, right open intervals (see Definition \ref{def:iet}), the indicator function $\chi_B$ is right continuous and $T_l$ is right continuous. Additionally, for a decreasing sequence $x_i$ with $x_i \to x$, $T_l$ is such that for a large enough $N$, $i>N$ implies that $T_l(x_i)$ is decreasing and approaching $T_l(x)$. Similarly for a fixed iterate $n$ of $T_l$, we have that for $i>N$ for some large enough $N$, the sequence $T_l^n(x_i)$ is decreasing and converges to $T_l^n(x)$. Relabel the sequence $\{x_i\}$ so that it's starting index ensures that $T_l^n(x_i)$ is always decreasing. Then by right continuity of $\chi_B$ we have
\[
\lim_{i\to\infty}\chi_B(T_l^n(x_i))=\chi_B(T_l^n(x)).
\]
Now applying a discrete version of Lebesgue Dominated Convergence Theorem since $\frac{\chi_B(T_l^n(x))}{2^n+1} \leq 1$, we obtain \[
\lim_{i\to\infty}H_l(x_i)=H_l(x)
\]
which proves right continuity.
    
    \noindent\textit{Proof of \ref{property:inc}:} We check that $H_l$ is increasing. Let $x<y$ be two points in $[0,1)$. To understand the order relationship between $H_l(x)$ and $H_l(y)$, we only need to know the positions of the $T_l$ iterates of $x,y$ with respect to the $A,B$ partition. We are interested in the first time $N_0$ such that $T^{N_0}(x)$ and $T^{N_0}(y)$ separate between $A,B$. The codings of $x$ and $y$ by the $A,B$ partition will match up to this point in time. However, to know whether $T^{N_0}(x) \in A$ and $T^{N_0}(y) \in B$, which would indicate $H_l(x)\leq H_l(y)$, or vice versa which would indicate $H_l(x)\geq H_l(y)$, we must carefully keep track of the position of the $x$ and $y$ orbits before the time $N_0$.
    
    To that end, let $N_1 \in \N$ be the first time such that $T^n(x)$ and $T^n(y)$ are not in the same interval out of any of the intervals $A_1,...,A_m,B_1,...,B_m$. 
    Up until and including this point in time the order is still preserved, meaning $T^n(x)<T^n(y)$ for all $n\leq N_1$ because $T$ is piecewise linearly increasing on an individual interval $A_i, B_i$. 
    
    There are three cases for where $T^{N_1}(x)$ and $T^{N_1}(y)$ fall: either both are in an $A$ interval, both are in a $B$ interval, or one is in an $A$ interval and the other is in a $B$ interval.
    First, suppose $T^{N_1}(x)\in A_i$ and $T^{N_1}(y)\in A_j$ for $i \neq j$. Since $T^{N_1}(x)<T^{N_1}(y)$, we must have that $i < j$. Similarly if $T^{N_1}(x) \in B_i$ and $T^{N_1}(y) \in B_j$ it must be that $i<j$. By definition of deck shuffling, $T(A_i)<T(A_j)$ and $T(B_i)<T(B_j)$ for $i<j$, therefore $T^{N_1+1}(x)<T^{N_1+1}(y)$ and more generally, the iterates retain the order $T^n(x) < T^n(y)$ for as long as they remain in the same set $A$ or $B$.

    Thus the situation is reduced to the first time $N_0$ that $T^n(x)$ and $T^n(y)$ separate between $A$ and $B$. (Note that $N_0$ may be $\infty$ if the points never separate on $A,B$.) Since $T^n(x) < T^n(y)$ for all $n<N_0$ and by definition of a deck-shuffler maintaining $T(A_i)<T(A_j)$ and $T(B_i)<T(B_j)$ for $i<j$, it must be the case that $T^{N_0}(x)<T^{N_0}(y)$, so the only possibility at time $N_0$ is that $T^{N_0}(x) \in A$ and $T^{N_0}(y) \in B$. Therefore, applying the coding formula $H_l$, we have
    \begin{equation}\label{formula1}
       H_l(x)=\sum_{n=0}^{N_0-1}\frac{\chi_B(T^n(x))}{2^{n+1}}+\frac{0}{2^{N_0+1}}+\sum_{n=N_0+1}^\infty\frac{\chi_B(T^n(x))}{2^{n+1}} 
    \end{equation}
    and
    \begin{equation}\label{formula2}
        H_l(y)=\sum_{n=0}^{N_0-1}\frac{\chi_B(T^n(x))}{2^{n+1}}+\frac{1}{2^{N_0+1}}+\sum_{n=N_0+1}^\infty\frac{\chi_B(T^n(y))}{2^{n+1}}.
    \end{equation}
    
    Note the first summation in each equation is the same because the itineraries with respect to the $A,B$ partition have matched up to time $N_0-1$. Thus $H_l(x) \leq H_l(y)$, proving that $H_l$ is always increasing. 

    \noindent\textit{Proof of \ref{property:plateaus}:} First we show that a plateau in $H_l$ is equivalent to an atom in $\mu_l$. It is clear that if $H_l$ has a plateau, $[a,b]=H_l^{-1}(\{z\})$, then $\mu_l$ has an atom at $z$ by definition of $\mu_l$ as pushforward of Lebesgue. Conversely, suppose that $z$ is an atom for $\mu_l$. Then it must satisfy that $\mu_l(\{z\})>0$, which is $\Leb(H_l^{-1}(z))>0$. Therefore $H_l^{-1}(z)$ contains two points say $a<b$ (as well as neighborhoods around them). By property \ref{property:inc}, we already know that $H_l$ is increasing. Therefore for any $a<y<b$ we have $H_l(a)\leq H_l(y) \leq H_l(b)$ which is $z\leq H_l(y)\leq z$. Therefore, the entire interval $[a,b]$ satisfies $H_l[a,b]=z$ and we may conclude that $H_l^{-1}(z)$ is an interval, thus $H_l$ has a plateau.
    
    Next we show that if $z$ is an atom in $\mu$ then its associated plateau $[a,b]$ is such that every point $x\in [a,b]$ is periodic with a common period. Let $z$ be an atom of $\mu_l$. Since $z$ is an atom of a $E_2$ invariant measure, it must be a periodic point with finite orbit (that is, a truly periodic point, not a pre-periodic point). Therefore $z$ is a rational number and has a periodic binary representation. This implies that for $x\in H_l^{-1}(z)$, the coding $\mathcal{H}_l(x)$ is a periodic sequence. By Lemma \ref{lem:codingperiodic}, $x$ must be periodic with respect to $T_l$ and with the same minimal period as $\mathcal{H}_l(x)$.

    Lastly suppose that $[a,b]$ is a nontrivial interval such that every $x\in[a,b]$ is periodic under $T_l$ with common minimal period $n$. By Lemma \ref{lem:codingperiodic}, each $x \in [a,b]$ has a periodic coding $\mathcal{H}_l(x)=\overline{x_1,...,x_n}$. Thus $H_l(x)$ is a periodic rational number. There are only finitely many periodic rational numbers with period $n$. Thus the image $H_l([a,b])$ must be finitely many plateaus.
    
    \noindent\textit{Proof of \ref{property:minimal}:} 
    In the proof of property \ref{property:inc}, we see by formulas (\ref{formula1}) and (\ref{formula2}) that equality is achieved if $\chi_B(T^n(y))=1$ for all $n>N_0$ and $\chi_B(T^n(y))=0$ for all $n>N_0$, or if $N_0=\infty$. We will explicitly rule out those cases by using minimality. 
    
    If $T$ is minimal, then the orbits of $x$ and $y$ are both dense in $[0,1)$. Let $\varepsilon=|x-y|$. Let $z=|A_1|+...+|A_m|$ be the end point separating the sets $A$ and $B$ and let $N_2$ be the first time that $|z-T^n(x)|<\varepsilon$. Recall, it is possible a priori that $N_0=\infty$, but $N_2$ is guaranteed to exist by minimality. If $N_2<N_1$, that is the orbits of $x$ and $y$ have stayed in the same interval out of $A_1,...,A_m,B_1,...,B_m$ as each other at each step before time $N_2$, then $|T^{N_2}(x)-T^{N_2}(y)|=|x-y|$ because $T$ is an isometry on each of the intervals. If $N_1<N_2<N_0$ then $T^n(x)$ and $T^n(y)$ have stayed in the same interval out of $A, B$ as each other at each step before time $N_2$. By definition of deck-shuffler, the distance can only increase for two points in the same interval $A$ or $B$ thus $|T^{N_2}(x)-T^{N_2}(y)|\geq|x-y|$. Thus either way, we get that $z-T^{N_2}(x)<\varepsilon$ implies that $T^{N_2}(y)-z>0$ and in particular, $T^{N_2}(y)$ is in $B$. Therefore while $N_0$ could a priori be infinity, minimality implies that $N_0=N_2 < \infty$.
    
    Lastly, minimality also implies that the orbits $T^n(y)$ and $T^n(x)$ can never get stuck in only $A$ or only $B$ and so we can't have $\chi_B(T^n(x))=1$ for all $n>N_0$ and $\chi_B(T^n(y))=0$ for all $n>N_0$. Thus $H_l$ is strictly increasing under the additional assumption that $T$ is minimal. If $H_l$ is strictly increasing, $H_l$ must be one-to-one, and therefore is a right continuous embedding of the IET into the doubling map.
\end{proof}

Next we will show that for any $2m$ deck shuffler $T_l$, the image of its corresponding $H_l$ is contained in a $2m-1$ flower. We introduce some additional notation for use in the next few theorems.
For the deck shuffler IET defined by intervals $A_i, B_i$, label the end points of each interval \[A_i=[a_{i,1},a_{i,2}) \text{ and } B_i=[b_{i,1},b_{i,2}).\] Note that points are labeled twice, e.g. $a_{i,2}=a_{i+1,1}$ for $i<m$.

To show that the image of $H_l$ is contained in a $2m-1$ flower, we will show that the petals of the flower are determined by the $2m-1$ sets
\begin{align}\label{petals}
        \overline{\text{conv}}(H_l(A_1)),...,\overline{\text{conv}}(H_l(A_{m-1}))&,\overline{\text{conv}}(H_l(A_m\cup B_1)),\\\notag&\overline{\text{conv}}(H_l(B_2)),...,\overline{\text{conv}}(H_l(B_m))
    \end{align} 
    where $\overline{\text{conv}}(\cdot)$ denotes the closed convex hull. In particular, we must show that the sets in (\ref{petals}) satisfy the mutually non-antipodal structure of a flower for the doubling map. To that end, we will verify that the following inequality holds:

    \begin{align}\label{convexineqs}
        0 \leq \overline{\text{conv}}(H_l(B_1))-\frac{1}{2}\leq \overline{\text{conv}}(H_l(A_1))\leq\overline{\text{conv}}(H_l(B_2))-\frac{1}{2}\leq  \\\notag \overline{\text{conv}}(H_l(A_2))\leq...\leq \overline{\text{conv}}(H_l(A_m\cup B_1)) \leq \\ \notag \overline{\text{conv}}(H_l(A_1))+\frac{1}{2} \leq\overline{\text{conv}}(H_l(B_2))\leq\overline{\text{conv}}(H_l(A_2))+\frac{1}{2}\leq \\ \notag  \overline{\text{conv}}(H_l(B_3))\leq ...\leq \overline{\text{conv}}(H_l(A_m))+\frac{1}{2}\leq 1
    \end{align}
    where the non-strict inequalities in (\ref{convexineqs}) denotes equality is possible only at the endpoints. Overlapping only at the end points satisfies the mutually non-antipodal property of a flower's petals.

    Note that inequality \ref{convexineqs} is implied by the following strict inequality:
    \begin{align}\label{ineqs}
        0 < H_l(B_1)-\frac{1}{2}< H_l(A_1)<H_l(B_2)-\frac{1}{2}< H_l(A_2)< ...< H_l(A_m)< \frac{1}{2}< \\ \notag H_l(B_1)< H_l(A_1)+\frac{1}{2}< H_l(B_2)< ...< H_l(B_m)< H_l(A_m)+\frac{1}{2}< 1
    \end{align}
    If the inequalities are strict for $H_l(A_i), H_l(B_i)$ then taking the closed convex hull can only create equality at end points.

    We will additionally show that the only way that the image of $H_l$ may fail to be contained in a flower is if three things happen simultaneously: one of the sets $A_i$ for $i \in \{1,...,m-1\}$ (or $B_i$ for $i \in \{2,...,m\}$ or $A_m \cup B_1$) maps under $H_l$ to exactly one point, and both of the inequalities in
    \[\overline{\text{conv}}(H_l(B_i))-\frac{1}{2}\leq \overline{\text{conv}}(H_l(A_i))\leq \overline{\text{conv}}(H_l(B_{i+1}))-\frac{1}{2}\]
    (or in $\overline{\text{conv}}(H_l(A_{i-1}))-\frac{1}{2}\leq \overline{\text{conv}}(H_l(B_i))\leq \overline{\text{conv}}(H_l(A_{i}))-\frac{1}{2}$ or \\ $\overline{\text{conv}}(H_l(B_{m-1}))-\frac{1}{2}\leq \overline{\text{conv}}(H_l(A_m\cup B_{1}))\leq \overline{\text{conv}}(H_l(A_1))+\frac{1}{2}$ respectively) 
    fail to be strict inequalities. Indeed, if these three things happen at once then the petal containing $H_l(A_i)$ (or $H_l(B_i)$ or $H_l(A_m\cup B_1)$) may collapse to a point. To avoid this we introduce the following definition/proposition.

    \begin{prop}\label{prop:noncollapsing}
        Every $2m$ deck-shuffler has the following property, which we call \emph{non-collapsing}: whenever a set  $C\in \{A_1,...,A_{m-1}, A_m\cup B_1, B_2,...,B_m\}$ maps to a single point $\{x \}$ under $H_l$, at least one of the inequalities in (\ref{convexineqs}) adjacent to $\overline{\text{\emph{conv}}}(H_l(C))$ is a strict inequality.       
    \end{prop}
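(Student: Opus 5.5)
We argue by contradiction. Suppose some set $C$ in the list has $H_l(C)=\{x\}$, and that both inequalities of (\ref{convexineqs}) adjacent to $\overline{\text{conv}}(H_l(C))=\{x\}$ are equalities. The plan is to turn this into a contradiction about $T_l$-periodic orbits.

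The first step is to use the plateau structure. Since $C$ is a nondegenerate interval (all lengths $l_i>0$) on which $H_l$ is constant, $C$ lies inside a plateau of $H_l$. By Theorem \ref{thm:H_l}\ref{property:plateaus}, $x$ is then an atom of $\mu_l$ and is $E_2$-periodic. Moreover, every point of the plateau is $T_l$-periodic with the common minimal period $n$ of the binary expansion of $x$, by Lemma \ref{lem:codingperiodic}. So $T_l^n$ is the identity on $C$.

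The second step is to pin down what the equalities force. Take the representative case $C=A_i$ with $1\le i\le m-1$; the cases $C=B_i$ and $C=A_m\cup B_1$ are symmetric. Equality on both sides means that $\sup H_l(B_i)-\tfrac12=x=\inf H_l(B_{i+1})-\tfrac12$. Since $H_l$ is increasing (Theorem \ref{thm:H_l}\ref{property:inc}), the values on $B_i\cup B_{i+1}$ then cluster at $x+\tfrac12$ from both sides. Now $H_l$ is right continuous, $B_i$ and $B_{i+1}$ are adjacent in $B$, and $H_l$ is monotone on $B$. Together these give that $H_l$ takes the value $x+\tfrac12$ at $b_{i+1,1}$, or at least as a left limit at the junction point $b_{i,2}=b_{i+1,1}$.

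The third step produces the contradiction through dynamics. Apply $E_2$. Because $H_l$ intertwines the dynamics (Theorem \ref{thm:H_l}\ref{property:intertwine}), $E_2(x+\tfrac12)=E_2(x)=H_l(T_lA_i)$. Thus the orbit of the junction point $b_{i+1,1}$ has the same coding as the orbit of $A_i$ from time one onward. Its coding is therefore $1$ followed by the periodic word of $x$ shifted, while $A_i$ has coding $0$ followed by the same word. The codings of $T_l(b_{i+1,1})$ and $T_l(A_i)$ are identical and periodic. So by the argument of Lemma \ref{lem:codingperiodic}, $T_l(b_{i+1,1})$ is periodic and lies in the same periodic family as $T_l(A_i)$. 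Hence $b_{i+1,1}$ itself is a $T_l$-preimage of a point of the periodic cycle through $T_l(A_i)$, and $T_l$ is a bijection on $[0,1)$. This forces $b_{i+1,1}\in T_l^{n-1}(T_l(A_i))=A_i$, which contradicts $b_{i+1,1}\in B$. In the left-limit version, the same argument is applied to points of $B_i$ approaching $b_{i,2}$. The orbit of a small interval $[b_{i,2}-\delta,b_{i,2})$ then tracks the periodic plateau, and injectivity of $T_l$ again gives a contradiction.

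The main obstacle is the second step: from equality of closed convex hulls we must extract an actual point of $B$ whose coding equals that of $C$ shifted by one leading symbol. This requires care with right continuity and with the closure at the right endpoint of $B_i$. If the exact-point argument fails, I would instead compare the two plateaus through the ordering of Lemma \ref{lem:leftright}. A collapsed petal would force $T_l(A_i)$ to be squeezed between $T_l(B_i)$ and $T_l(B_{i+1})$ with zero length, which contradicts the strictly positive length of $A_i$ in Definition \ref{def:deckshuffler}.
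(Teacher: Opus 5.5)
Your route works and differs from the paper's, but the last inference of your third step is wrong as written. Knowing that $T_l(b_{i+1,1})$ has the same $A,B$ coding as $T_l(A_i)$ does not place it in $T_l(A_i)$; by injectivity that would already say $b_{i+1,1}\in A_i$. So ``$b_{i+1,1}\in T_l^{n-1}(T_l(A_i))=A_i$'' is unjustified.

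The correct finish uses exactly your ingredients:
\begin{itemize}
\item Lemma~\ref{lem:codingperiodic} gives $T_l^n(T_l b_{i+1,1})=T_l b_{i+1,1}$.
\item Injectivity of $T_l$ then gives $T_l^n(b_{i+1,1})=b_{i+1,1}$.
\item Hence $H_l(b_{i+1,1})=H_l(T_l^n b_{i+1,1})=E_2^{n-1}(H_l(T_l b_{i+1,1}))=E_2^n(x)=x$. This contradicts $H_l(b_{i+1,1})=x+\frac12$. Equivalently, $b_{i+1,1}$ carries the coding of $T_l^n(a)=a$ for $a\in A_i$, so it lies in $A$.
\end{itemize}
Passing from equal $H_l$-values to equal codings is legitimate. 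By Lemma~\ref{lem:noabfixed} applied along orbits, no coding ends in $0^\infty$ or $1^\infty$, so binary expansions here are unique.

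The step you flag as the main obstacle is not one. Since $H_l$ is increasing, $\inf H_l(B_{i+1})=H_l(b_{i+1,1})$ is attained, so the right-hand equality alone yields $H_l(b_{i+1,1})=x+\frac12$. You never need the left-hand equality, the left-limit variant, or the ``squeezing'' fallback, which is not an argument as stated. Like the paper, you really show that one specific adjacent inequality is always strict. The case $C=A_m\cup B_1$ is vacuous, since $H_l(A)\subset[0,\frac12)$ and $H_l(B)\subset[\frac12,1)$.

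As for the comparison, the paper treats the mirror case $C=B_i$ with contact point $a_{i,1}$. It observes that $a_{i,1}$ and $B_i$ share their coding from time one on, and then runs a drift argument by hand. Order is preserved within $A$ and within $B$, and the gap between the orbit of $a_{i,1}$ and the periodic interval $B_i$ can only grow, so eventually the two itineraries separate. You replace that drift computation by Lemma~\ref{lem:codingperiodic}, which packages the same mechanism, together with bijectivity of $T_l$ to carry periodicity back from time one to time zero. Your version is shorter and avoids redoing the order-and-gap bookkeeping. The paper's version is more self-contained and shows the dynamical mechanism directly.
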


    \begin{proof}
        Suppose that $B_i$ has $i \in \{2,...,m \}$ and is such that $H_l(B_i)=x$. We will show that 
        \[\overline{\text{conv}}(H_l(B_i))<\overline{\text{conv}}(H_l(A_i))+\frac{1}{2} \]
        The other cases when $A_m\cup B_1$ or $A_i$ for $i\in \{1,...,m-1\}$ map to a single point $x$ are similar. Suppose for the sake of contradiction that $\overline{\text{conv}}(H_l(B_i))\cap\overline{\text{conv}}(H_l(A_i))+\frac{1}{2}\neq \emptyset$. This means that at the very least, since $H_l$ is right continuous and $A_i=[a_{i,1},a_{i,2})$, the point $H_l(a_{i,1})=x-\frac{1}{2}$.

        Since $H_l(B_i)=x$, by property \ref{property:plateaus} in Theorem \ref{thm:H_l}, $x$ is a rational number and the $A,B$ coding of $B_i$ is a periodic sequence
        \[
        \mathcal{H}_l(B_i)=\overline{1x_2...x_k}
        \]
        and therefore the coding of $a_{i,1}$ is
        \[
        \mathcal{H}_l(a_{i,1})=0\overline{x_2...x_k1}
        \]
        
        By definition of a deck shuffler, $B_i$ and $A_i$ are mapped in succession after the first iterate. That is $T_l(b_{i,2})=T_l(a_{i,1})$ and $T_l(B_i)<T_l(a_{i,1})$. By our assumption, the itineraries with respect to the $A,B$ partition of $B_i$ and of $a_{i,1}$ match for all time starting at the first iterate. So $B_i$ and $a_{i,1}$ are always either both in $A$ or both in $B$. By Lemma \ref{lem:leftright}, $T_l$ preserves order when restricted to $A$ or $B$. Therefore the order of $B_i$ and $a_{i,1}$ will be preserved for all iterates.

        Property \ref{property:plateaus} in Theorem \ref{thm:H_l} also says that $T^n(B_i)=B_i$ for some positive $n$. Therefore, combining this with the order preservation of $T_l(B_i)<T_l(a_{i,1})$, we have after $n$ iterates that
        \[
        B_i=T_l^n(B_i)<T^n_l(a_{i,1}).
        \]
        By definition of a deck-shuffler, one more iterate will preserve the inequality and insert a positive distance between $T_l^{n+1}(B_i)$ and $T_l^{n+1}(a_{i,1})$. By repeating another $n$ iterates, the distance between $T_l^{jn+1}(B_i)$ and $T_l^{jn+1}(a_{i,1})$ can only grow. Thus we have
        \[
        T_l^{jn+1}(B_i)<[b_{i,2},c_j)<T^{nj+1}_l(a_{i,1})
        \]
        where $\{c_j\}$ is an increasing sequence. Now, since there are only finitely many lengths $l_1,...,l_{2m}$ that can be added at each iterate of $T_l$, we get that there is some $j$ such that $T^{jn+1}_l(a_{i,1}) \in B$, while $T^{jn+1}_l(B_i)$ is still in $A$, which contradicts that $T_l(B_i)$ and $T_l(a_{i,1})$ have the same $A,B$ itinerary.
    \end{proof}

 We may now prove the following theorem.

\begin{thm}\label{thm:containedinflower}
    Let $T_l$ be a $2m$ deck-shuffler IET. Then there is a flower $F_l$ with $2m-1$ petals such that $H_l([0,1))\subset F_l$.
\end{thm}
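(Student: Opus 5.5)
The plan is to prove the chain of inequalities (\ref{convexineqs}) using the monotonicity of $H_l$ (Theorem \ref{thm:H_l}\ref{property:inc}) together with the way the deck-shuffler permutation interleaves the images $T_l(B_1)<T_l(A_1)<\dots<T_l(B_m)<T_l(A_m)$. Then we take the $2m-1$ closed convex hulls in (\ref{petals}) as petals, and use Proposition \ref{prop:noncollapsing} to handle degenerate petals.

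\textbf{Step 1.} First I record the basic identity coming from the definition of $H_l$. For $x\in A$ we have $H_l(x)=\frac12 H_l(T_lx)$, and for $x\in B$ we have $H_l(x)=\frac12+\frac12 H_l(T_lx)$.

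\textbf{Step 2.} Next, monotonicity of $H_l$ applied to the ordered images gives
\[H_l(T_lB_1)\le H_l(T_lA_1)\le H_l(T_lB_2)\le\dots\le H_l(T_lB_m)\le H_l(T_lA_m),\]
where each set is compared pointwise. Dividing by $2$ and using Step 1 yields
\[H_l(B_1)-\tfrac12\le H_l(A_1)\le H_l(B_2)-\tfrac12\le\dots\le H_l(A_m).\]
Monotonicity on $[0,1)$ also gives $H_l(A)\le\frac12\le H_l(B)$, since points of $A$ have first binary digit $0$ and points of $B$ have first digit $1$. Adding $\frac12$ to the first chain produces the second half of (\ref{convexineqs}). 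This establishes (\ref{convexineqs}) with non-strict inequalities, with the two copies glued at $\frac12$, where $\overline{\mathrm{conv}}H_l(A_m)$ and $\overline{\mathrm{conv}}H_l(B_1)$ merge into the single petal $\overline{\mathrm{conv}}H_l(A_m\cup B_1)$. Since $T_l$ has no fixed points (Corollary \ref{cor:nofixedpoints}) and $0\notin H_l([0,1))$ by Lemma \ref{lem:noabfixed}, the outer bounds $0$ and $1$ should only be touched at endpoints.

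\textbf{Step 3.} Because consecutive sets in the chain touch at most at one point, the closed convex hulls are consecutive closed intervals overlapping only at endpoints. The translates by $\frac12$ of the petals interleave with the petals themselves, so $F\cap(F+\frac12)=\partial F$. The union $F\cup(F+\frac12)$ covers $\T$ exactly when the gaps are filled. Since the chain is only an ordering and not a covering, I would enlarge each hull to close the gaps: each gap between consecutive hulls in one half-circle is matched with a gap in the other half, and I assign it to one of the adjacent petals. Equivalently, I define a preimage selector $\eta$ whose $2m-1$ jump points are chosen between $E_2$ of consecutive hulls, and set $F_l=\overline{\eta(\T)}$. Membership of $H_l([0,1))$ in $F_l$ then follows from the ordering.

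\textbf{Step 4 (main obstacle).} The step I expect to be hardest is ensuring there are genuinely $2m-1$ nondegenerate petals. If some $H_l(C)$ is a single point and both adjacent inequalities are equalities, the corresponding petal collapses and the count of jump discontinuities drops. Proposition \ref{prop:noncollapsing} guarantees that at least one adjacent inequality is strict, which leaves a gap into which the collapsed petal can be widened to positive length. The parity constraint, that an $E_2$-flower has an odd number of petals, must be checked to remain consistent with $2m-1$. I would verify carefully that this widening does not violate the non-antipodal condition, using the strict gap on the corresponding side of the mirrored chain.
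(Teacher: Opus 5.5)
Your proposal is correct and follows essentially the paper's route: the identity $H_l(x)=\tfrac12\chi_B(x)+\tfrac12 H_l(T_lx)$, monotonicity of $H_l$, and the deck-shuffler ordering $T_l(B_1)<T_l(A_1)<\dots<T_l(A_m)$ give the chain (\ref{convexineqs}), and Proposition \ref{prop:noncollapsing} rules out collapsed petals. Two differences are improvements over the paper: you obtain the non-strict chain directly, instead of first treating the strictly increasing case, and you spell out how to fill the gaps to get an actual flower, which the paper leaves implicit. In that gap-filling step, split each nonempty gap at an interior point rather than assigning it wholesale to one neighbour, so that two adjacent degenerate hulls both become nondegenerate.
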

\begin{proof}
    
    Consider first the case when $H_l$ is strictly increasing. In this case, we can verify the inequality \ref{ineqs} which implies that the sets \ref{petals} are contained inside petals of a $2m-1$ flower.
    
    If $H_l$ is strictly increasing, then we clearly have $H_l(A_1)<...<H_l(A_m)<\frac{1}{2}<H_l(B_1)<...<H_l(B_m)$. 
    Note that by definition,
    \[
    H_l(x)=\frac{1}{2}\chi_B(x)+\frac{1}{2}H_l(Tx).
    \]
    
    Take any $x \in A_i$ and $y \in B_i$. Then $H_l(y)=\frac{1}{2}\cdot1+\frac{1}{2}H_l(Ty)$ and so $H_l(y)-\frac{1}{2}=\frac{1}{2}H_l(Ty)$. Similarly, $H_l(x)=\frac{1}{2}\cdot0+\frac{1}{2}H_l(Tx)=\frac{1}{2}H_l(Tx)$. However, since $T_l$ is a deck shuffler, it is guaranteed that $T(x)>T(y)$. Since $H_l$ is strictly increasing, we therefore have $H_l(Ty)<H_l(Tx)$ which implies $H_l(y)-\frac{1}{2}<H_l(x)$. Thus $H_l(B_i)-\frac{1}{2}<H_l(A_i)$ for any $i\in \{1,...,m \}$. 
    
    Similarly, let $x \in A_i$ and $z \in B_{i+1}$. Then $H_l(z)=\frac{1}{2}\cdot1+\frac{1}{2}H_l(Tz)$ and so $H_l(z)-\frac{1}{2}=\frac{1}{2}H_l(Tz)$, while $H_l(x)=\frac{1}{2}\cdot0+\frac{1}{2}H_l(Tx)=\frac{1}{2}H_l(Tx)$. This time, $T$ being a deck shuffler implies that $T(x) < T(z)$, so $H_l(Tx)<H_l(Tz)$ which implies $H_l(x)<H_l(z)-\frac{1}{2}$. Thus $H_l(A_i)<H_l(B_{i+1})-\frac{1}{2}$ for any $i\in \{1,...,m \}$. 

    By adding 1/2 to the inequalities $H_l(B_i)-\frac{1}{2}<H_l(A_i)$, $H_l(A_i)<H_l(B_{i+1})-\frac{1}{2}$, we obtain the remaining (strict) inequalities in \ref{ineqs}.

    Now, consider the case when $H_l$ is not strictly increasing but just increasing. Then one or more of the ineqaulities in \ref{ineqs} may be not strict. Suppose that $H_l(B_i) \leq H_l(A_i)+\frac{1}{2}$ is not strict. The other cases are similar. Since $H_l$ is increasing the only way the inequality is not strict is that there exists $\alpha \in A_i$ (possibly equal to $a_{i,1}$) and $\beta\in B_i$ such that 
    \begin{equation}\label{eqn:notstrict}
        H_l([\beta,b_{i,2}))=\{ x \}=H_l([a_{i,1},\alpha))+\frac{1}{2}.
    \end{equation}
    
    If the non-strict inequality is isolated, then the image $H_l([0,1))$ is still contained in a $2m-1$ flower because the equality (\ref{eqn:notstrict}) simply implies that $H_l(A_i)$ and $H_l(B_i)$ actually include the end point of the petal containing them.

    However, if two consecutive inequalities in property (1) are not strict, then one of the petals may collapse to a point. Indeed suppose in addition that $H_l(A_{i-1})+\frac{1}{2}\leq  H_l(B_{i})$ is not strict. Again, by increasing behavior of $H_l$, this implies that there is $\alpha' \in A_i$ and $\beta'\in B_{i+1}$ such that 
    \[
    H_l([\alpha',a_{i,2}))+\frac{1}{2}=\{x'\}=H_l([b_{i+1,1},\beta')).
    \]

    If $x=x'$ then the petal containing $H_l(B_i)$ collapses to a point and the petals containing $H_l(A_{i-1}), H_l(A_{i})$ merge into one petal with a hole. The case when both $H_l(B_i)=\{x\}$ is a plateau and $H_l(B_i)-\frac{1}{2} \leq H_l(A_i)$ and $H_l(A_i)\leq  H_l(B_{i+1})-\frac{1}{2}$ are not strict is the only case when the flower structure is at risk.

    However, due to Proposition \ref{prop:noncollapsing}, the behavior that $H_l(B_i)=\{x\}$ is a plateau and $H_l(B_i)-\frac{1}{2} \leq H_l(A_i)$ and $H_l(A_i)\leq  H_l(B_{i+1})-\frac{1}{2}$ are both not strict will never happen. Therefore, even if $H_l$ is not strictly increasing, its image will always be contained in a flower.
\end{proof}

With property \ref{property:plateaus} of $H_l$, bounding the number of plateaus in $H_l$ is equivalent to bounding the number of periodic orbits in the support of $\mu_l$. By Theorem \ref{thm:complexitybound}, a measure supported in a flower has bounded complexity, and so in particular must include only a finite number of periodic orbits in its support. Thus we get the following important result.
\begin{cor}
    Let $T_l$ be a deck-shuffler IET. Then its corresponding $H_l$ is increasing and moreover has finitely many plateaus.
\end{cor}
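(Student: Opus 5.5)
The corollary has two parts. Monotonicity of $H_l$ is already property \ref{property:inc} of Theorem \ref{thm:H_l}, so the work is in showing there are only finitely many plateaus. The plan is to turn plateaus of $H_l$ into periodic orbits of $E_2$ lying inside a flower, and then count those orbits using the complexity bound.

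\emph{Plateaus give atoms and periodic points.} By property \ref{property:plateaus}, each plateau $[a,b]=H_l^{-1}(\{z\})$ corresponds to an atom $z$ of $\mu_l=H_{l*}\Leb$. Since $\mu_l$ is $E_2$-invariant by \ref{property:invpushforward}, $z$ is a genuinely periodic point of $E_2$. Distinct plateaus are disjoint preimages of distinct points, so they give distinct atoms. Every atom lies in $\supp(\mu_l)$, hence in $\overline{H_l([0,1))}$. By Theorem \ref{thm:containedinflower} that closure lies in the $(2m-1)$-flower $F_l$, which is closed. So it suffices to show that $\supp(\mu_l)$ contains only finitely many periodic points.

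\emph{Finitely many orbits.} Let $Y=\supp(\mu_l)$. It is closed, and it satisfies $E_2(Y)=Y$ because $\mu_l$ is invariant and $\T$ is compact. So $Y$ is an $E_2$ subsystem contained in $F_l$, and Theorem \ref{thm:complexitybound} gives $C_Y(n)\le (2m-1)n+k$. Each periodic orbit in $Y$ is the support of an ergodic invariant measure inside $F_l$. By Corollary \ref{cor:complexitybound}, applied with $p=2m-1$, there are at most $2m-2$ such ergodic measures. Hence $Y$ contains at most $2m-2$ periodic orbits, so finitely many periodic points, and therefore $H_l$ has finitely many plateaus.

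\emph{Main obstacle and fallback.} The delicate step is deriving finiteness of periodic orbits from a linear complexity bound; this is why I route it through the Cyr--Kra consequence stated as Corollary \ref{cor:complexitybound}. If one prefers to avoid that corollary, there is a direct argument. A periodic orbit of period $q$ occupies $q$ distinct $n$-th dyadic intervals once $n$ is large. So $N$ disjoint periodic orbits of total cardinality $S$ force $C_Y(n)\ge S$ for all large $n$, which alone gives no contradiction with a linear bound. One must refine the recursion $C_Y(n+1)\le C_Y(n)+p$ from the proof of Theorem \ref{thm:complexitybound}. The refinement to aim for is that each of the $p$ discontinuity points of the preimage selector can separate only boundedly many orbits. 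I would rely on the corollary rather than carry this out.
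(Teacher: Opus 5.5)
Your reduction is correct and is the paper's own route. Monotonicity is property \ref{property:inc}, and by property \ref{property:plateaus} of Theorem \ref{thm:H_l} the plateaus of $H_l$ correspond to atoms of $\mu_l$, i.e.\ to periodic points of $E_2$ in $\supp(\mu_l)\subset F_l$. The gap is the next step: showing that this support contains only finitely many periodic orbits.

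\textbf{Why the finiteness step fails.} You are right that Theorem \ref{thm:complexitybound} cannot give it. Adjoining a periodic orbit of period $q$ raises $C(n)$ by at most $q$. There are even $E_2$-subsystems with infinitely many periodic orbits and $C(n)=O(n)$: the closure of the periodic Sturmian orbits whose rotation numbers are the convergents $p_k/q_k$ of an irrational has $C(n)\le n+1+\sum_{q_k\le n}q_k$, which is $O(n)$ since $q_{k+2}\ge 2q_k$.

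Corollary \ref{cor:complexitybound} does not close the hole either. The Cyr--Kra theorem it is drawn from counts only nonatomic measures, and necessarily so by the remark above. It therefore says nothing about measures carried by periodic orbits. Read literally, the corollary is false, and it fails in your own setting. Take $m=1$ and $l=(\tfrac12,\tfrac12)$. Then $T_l$ is rotation by $\tfrac12$, and $H_l$ has the two plateaus $A_1,B_1$ with values $\tfrac13,\tfrac23$. So $\mu_l=\tfrac12(\delta_{1/3}+\delta_{2/3})$ is an ergodic measure on a periodic orbit in a $1$-flower, while your bound allows $2m-2=0$ periodic orbits. The paper's one-line proof makes the same leap from linear complexity to finitely many periodic orbits, so following it does not help.

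\textbf{How to repair it.} Count on the IET side, where no complexity is needed. By property \ref{property:plateaus}, every plateau is a positive-length interval of $T_l$-periodic points.
\begin{enumerate}
\item Let $x$ have minimal period $n$, and let $J=[a,b)$ be the maximal interval containing $x$ on which the itinerary through $A_1,\dots,B_m$ is constant for $n$ steps. Then $T_l^n$ is a translation on $J$ fixing $x$, so $T_l^n=\mathrm{id}$ on $J$.
\item Each level $T_l^jJ$ is again such a maximal interval. Hence the levels of distinct cylinders $J\cup T_lJ\cup\dots\cup T_l^{n-1}J$ are pairwise disjoint.
\item Maximality on the left forces some $T_l^j(a)$, with $j<n$, to be one of the $2m$ left endpoints of $A_1,\dots,B_m$. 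So there are at most $2m$ cylinders, and finitely many levels in all.
\item The full itinerary, and hence $H_l$, is constant on each level. So each plateau is exactly the union of the levels it meets.
\end{enumerate}
Since distinct plateaus are disjoint, $H_l$ has at most as many plateaus as there are levels, hence finitely many.
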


As is the case with 1-flowers in \cite{bullet}, for a given IET, the image of the associated $H_l$ may be contained in many flowers, particularly if there are finite orbits in the image of $H_l$ because these provide some ``wiggle room'' for the endpoints of the petals. However, with an additional property, the choice of flower becomes unique.

\begin{thm}\label{thm:minimalds}
    Assume that $T_l$ is a $2m$ deck-shuffler IET that has Keane's property. Then there is a unique $2m-1$ flower $F_l$ such that $H_l([0,1))=F_l$
\end{thm}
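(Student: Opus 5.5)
The plan is to identify $F_l$ explicitly as the union of the $2m-1$ closed convex hulls in (\ref{petals}). I then prove three things: these hulls are petals of a genuine $2m-1$ flower, that flower is determined by $H_l([0,1))$ with no freedom in its endpoints, and no other $2m-1$ flower contains $H_l([0,1))$. The equality $H_l([0,1))=F_l$ will be read petal by petal: each petal of $F_l$ is exactly the closed convex hull of the image of the corresponding interval $A_i$, $A_m\cup B_1$ or $B_i$. I expect to have to make this reading explicit. With Keane's property, $H_l$ is strictly increasing with jumps, so its image by itself need not be a union of intervals; that caveat is part of the plan, not a solved step.

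First I use Keane's property. By \cite{keane} it implies minimality, so Theorem \ref{thm:H_l}\ref{property:minimal} makes $H_l$ strictly increasing. The first half of the proof of Theorem \ref{thm:containedinflower} then gives the strict inequalities (\ref{ineqs}). Hence the hulls in (\ref{petals}) are pairwise non-antipodal except possibly at endpoints.

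Next, and this is the key new step, I show that consecutive hulls are \emph{exactly} antipodal at their endpoints. For example, I would show
\[
\sup H_l(A_i)=\inf H_l(B_{i+1})-\tfrac12, \qquad \sup H_l(B_i)-\tfrac12=\inf H_l(A_i),
\]
together with the boundary cases at $0$, $\frac12$ and $1$. To do this I use the identity $H_l(x)=\frac12\chi_B(x)+\frac12 H_l(T_lx)$. On $A_i$ this gives $H_l(x)=\frac12 H_l(T_lx)$, and on $B_{i+1}$ it gives $H_l(z)-\frac12=\frac12 H_l(T_lz)$. By the deck-shuffler structure, $T_l(A_i)$ and $T_l(B_{i+1})$ are adjacent intervals, with the left endpoint of $T_l(B_{i+1})$ equal to $\lim_{x\to a_{i,2}^-}T_lx$. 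So the claim reduces to showing that $H_l$ has no jump at the common point $c=T_l(b_{i+1,1})$, i.e.\ $\lim_{y\to c^-}H_l(y)=H_l(c)$.

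This continuity is the main obstacle. The discontinuities of $H_l$ occur only at points whose forward orbit hits a discontinuity of $\chi_B\circ T_l^n$, that is, at preimages of the endpoints $a_{i,j}$ and $b_{i,j}$. Here the left limit of $H_l$ is governed by the coding of the left-limit orbit. My plan is to compare the left-limit itinerary of $c$ with the actual itinerary of $c$. Keane's property says the orbits of the endpoints are infinite and distinct, so no two such endpoints ever collide. I would use this to show that the left-limit orbit and the orbit of $c$ separate between $A$ and $B$ in a way that produces the dyadic identity $0111\ldots=1000\ldots$ rather than a genuine gap. If the direct comparison fails, the fallback is a measure argument. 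Let $\mu_l=H_{l*}\Leb$, which has no atoms since minimality forces strict increase. The complement of the image inside the hulls would then consist of gaps created by jumps, and I would compare $\Leb$ of the union of hulls with $\frac12$. Exact antipodal matching of consecutive endpoints is equivalent to this union tiling the circle together with its translate by $\frac12$, hence to total length $\frac12$.

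Finally, uniqueness. Let $F'$ be any $2m-1$ flower containing $H_l([0,1))$. Each petal of $F'$ is an interval, and $F'$ has exactly $2m-1$ of them. The strict separations in (\ref{ineqs}) force each hull in (\ref{petals}) into a single petal of $F'$, so $F_l\subset F'$. Both sets are flowers, so both have Lebesgue measure $\frac12$, and both are finite unions of closed intervals. Therefore $F'=F_l$.
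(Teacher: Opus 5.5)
\paragraph{Where the proposal has a gap.} Your reduction is correct and is the paper's own. The identity $H_l(x)=\tfrac12\chi_B(x)+\tfrac12 H_l(T_lx)$, equivalently the intertwining $E_2\circ H_l=H_l\circ T_l$, reduces exact antipodality of consecutive petal endpoints to left continuity of $H_l$ at the points $c=T_l(b_{i+1,1})$ and $T_l(a_{i,1})$. The gap is in how you propose to prove that continuity, which is the whole content of the theorem.

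You expect the left-limit itinerary of $c$ and the itinerary of $c$ to separate between $A$ and $B$ and produce $0111\ldots=1000\ldots$. This cannot happen, for two reasons:
\begin{enumerate}
\item Under Keane's property they never separate. Each of these points lies on the forward orbit of an endpoint: $c=T_l(b_{i+1,1})$, and $T_l(a_{1,1})=T_l(0)=T_l^2(b_{1,1})$. So no iterate $T_l^n(c)$, $n\ge 0$, is an interior endpoint, and the interior endpoints are the only places where $T_l$ or $\chi_B$ is discontinuous. This no-connection statement is exactly what Keane's condition gives. Hence every $T_l^n$ is continuous at $c$, $\chi_B$ is continuous at every $T_l^n(c)$, and the uniformly convergent series defining $H_l$ is continuous at $c$. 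This one-line use of Keane is the paper's argument, and it is the idea your proposal is missing.
\item Even without Keane, a $0111\ldots$ tail would require a (left-limit) orbit to stay in $B$ forever. That is impossible, because $T_l$ moves every point of $\overline{B_i}$ to the left by at least $|A_m|>0$, as in Lemma \ref{lem:noabfixed}.
\end{enumerate}
So the plan as written aims at a false intermediate claim.

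\paragraph{The fallback and the remaining steps.} The fallback does not rescue this step. Saying that the union of the hulls has Lebesgue measure $\tfrac12$ just restates exact antipodal matching. The fact that $\mu_l$ has no atoms says nothing about the Lebesgue length of the hulls, so no computation is actually proposed.

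Once continuity at $T_l(a_{i,1})$ and $T_l(b_{i+1,1})$ is established as above, the rest of your outline works.
\begin{itemize}
\item Your caveat that $H_l([0,1))$ is not itself a union of intervals is correct. What is really proved, in the paper as well, is that the union of the hulls in (\ref{petals}) is a flower and is the unique $(2m-1)$-flower containing the image.
\item Your uniqueness step goes through, but the claim that the strict separations ``force each hull into a single petal of $F'$'' needs a justification. If $y\in H_l([0,1))$ is not a boundary point of $F'$, then $y+\tfrac12\notin F'$. Taking such non-boundary image points in the interior of each hull gives $2m-1$ forced gaps of $F'$ between consecutive hulls. A flower $F'$ with exactly $2m-1$ petals therefore cannot also have a gap inside a hull.
\end{itemize}
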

\begin{proof}
    Theorem \ref{thm:containedinflower} shows that there is at least one $F_l$ containing the image $H_l([0,1))$. In order to show that it is unique, we verify the following condition.

\begin{center}
    \begin{tabular}{l}
 $H_l(a_{i,1})=\lim_{x\to b_{i,2}^-}H_l(x)-\frac{1}{2}$ \text{ for } $1\leq i\leq m$\\
  $\lim_{x\to a_{i,2}^-}H_l(x)=H_l(b_{i+1,1})+\frac{1}{2}$\text{ for } $1\leq i\leq m-1$
\end{tabular}
\end{center}

Firstly, since $a_{m,2}=b_{1,1}$ is the end point separating the partition by $A, B$ by which $H_l$ is coding, it is true that \[\lim_{x \to a_{m,2}^-}H_l(x)<\frac{1}{2}<H_l(b_{1,1}).\] Since $H_l$ by assumption is strictly increasing, no endpoint maps to $\frac{1}{2}$ under $H_l$.

    In order to show that $H_l(a_{i,1})=\lim_{x\to b_{i,2}^-}H_l(x)+\frac{1}{2}$, ie that $H_l(a_{i,1})$ and $\lim_{x\to b_{i,2}^-}H_l(x)$ are antipodal points, we will show that they are preimages of the same point under the doubling map $E_2$. Since $\lim_{x\to b_{i,2}^-}H_l(x) \neq \frac{1}{2}$, then by left continuity of $E_2$,
    \[
    E_2(\lim_{x\to b_{i,2}^-}H_l(x))=\lim_{x\to b_{i,2}^-}E_2(H_l(x)).
    \]
    Since $H_l$ intertwines the dynamics of $E_2$ and $T$, replace
    \[
    \lim_{x\to b_{i,2}^-}E_2(H_l(x))=\lim_{x\to b_{i,2}^-}H_l(T(x)).
    \]
    To bring the limit back inside of $H_l$, we use the Keane condition. By Keane's property, $T(b_{i,2})$ is not equal to any of the original endpoints, and so $H_l$ is left continuous at the limit. Thus we have 
    \[
    \lim_{x\to b_{i,2}^-}H_l(T(x))=H_l(\lim_{x\to b_{i,2}^-}T(x)).
    \]
    Finally, by definition of deck-shuffling, $T(b_{i,2})=T(a_{i,1})$. Therefore we have 
    \[
    H_l(\lim_{x\to b_{i,2}^-}T(x))=H_l(T(a_{i,1}))=E_2(H_l(a_{i,1})).
    \] This completes the proof that $E_2(H_l(a_{i,1}))=E_2(\lim_{x\to b_{i,2}}H_l(x))$. The other equality $E_2(\lim_{x \to a_{i,2}}H_l(x))=E_2(H_l(b_{i+1,1}))$ follows from a similar argument, using commutativity of the dynamics, Keane's condition, and the permutation defined by deck shuffling.
\end{proof}

\subsubsection{Explicit Examples}\label{sec:examples}
We give three explicit examples of a deck-shuffler 4-IET, specified by its length data, and the corresponding measure $\mu_l=H_{l*}\Leb$.
\vspace{2mm}

\noindent \textbf{Example 1}

Let $T_l$ be the deck-shuffler IET with length data $(\frac{2}{5},\frac{1}{5},\frac{1}{5},\frac{1}{5})$. Then every point $x\in [0,1)$ has period $5$, and we have the codings under the $A,B$ partition as shown in Table \ref{tab:example1}.
\begin{table}[h]
    \centering
    \renewcommand{\arraystretch}{1.5}
    \begin{tabular}{|c|c|}
        \hline
        $x \in A_1\cap[0,\frac{1}{5})$ & $H_l(x)=\overline{00011}=\frac{3}{31}$ \\
        $x \in A_1\cap[\frac{1}{5},\frac{2}{5})$ & $H_l(x)=\overline{00110}=\frac{6}{31}$ \\
        $x \in A_2$ & $H_l(x)=\overline{011000}=\frac{12}{31}$ \\
        $x \in B_1$ & $H_l(x)=\overline{10001}=\frac{17}{31}$ \\
        $x \in B_2$ & $H_l(x)=\overline{11000}=\frac{24}{31}$ \\
        \hline
    \end{tabular}
    \caption{$H_l$ for $l=(\frac{2}{5},\frac{1}{5},\frac{1}{5},\frac{1}{5})$}
    \label{tab:example1}
\end{table}

The image under $H_l$ is exactly the orbit $\{\frac{3}{31},\frac{6}{31},\frac{12}{31},\frac{24}{31},\frac{17}{31}\}$. Figure \ref{fig:example1} shows the graph of $H_l$.
\begin{figure}
    \centering
    \includegraphics[width=0.5\linewidth]{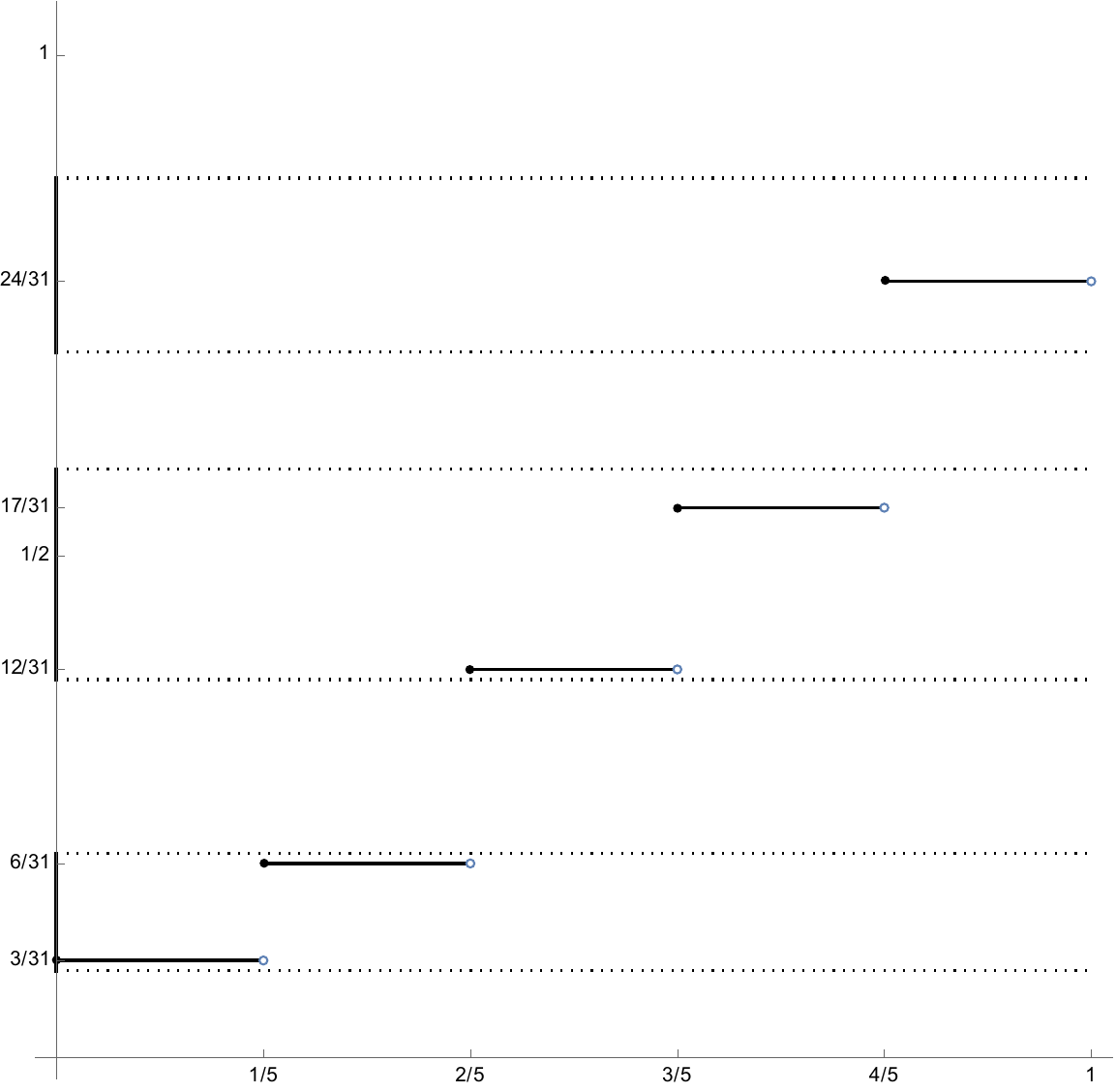}
    \caption{Graph of $H_l$ for $l=(\frac{2}{5},\frac{1}{5},\frac{1}{5},\frac{1}{5})$ and a flower containing the image}
    \label{fig:example1}
\end{figure}

\vspace{2mm}

\noindent \textbf{Example 2}

Let $T_l$ be the deck-shuffler IET with length data $(\frac{3}{10},\frac{2}{10},\frac{2}{10},\frac{3}{10})$. Every point either has period $6$ or has period $4$. Specifically, we have the codings under the $A,B$ partition as shown in Table \ref{tab:example2}. 
\begin{table}[h]
    \centering
    \renewcommand{\arraystretch}{1.5}
    \begin{tabular}{|c|c|}
        \hline
        $x \in A_1\cap[0,\frac{1}{10})$ & $H_l(x)=\overline{000111}=\frac{1}{9}$ \\
        $x \in A_1\cap[\frac{1}{10},\frac{2}{10})$ & $H_l(x)=\overline{0011}=\frac{1}{5}$ \\
        $x \in A_1\cap[\frac{2}{10},\frac{3}{10})$ & $H_l(x)=\overline{001110}=\frac{2}{9}$ \\
        \hline
        $x \in A_2\cap[\frac{3}{10},\frac{4}{10})$ & $H_l(x)=\overline{0110}=\frac{2}{5}$ \\
        $x \in A_2\cap[\frac{4}{10},\frac{5}{10})$ & $H_l(x)=\overline{011100}=\frac{4}{9}$ \\
        \hline
        $x \in B_1\cap[\frac{5}{10},\frac{6}{10})$ & $H_l(x)=\overline{100011}=\frac{5}{9}$ \\
        $x \in B_1\cap[\frac{6}{10},\frac{7}{10})$ & $H_l(x)=\overline{1001}=\frac{3}{5}$ \\
        \hline
        $x \in B_2\cap[\frac{7}{10},\frac{8}{10})$ & $H_l(x)=\overline{110001}=\frac{7}{9}$ \\
        $x \in B_2\cap[\frac{8}{10},\frac{9}{10})$ & $H_l(x)=\overline{1100}=\frac{4}{5}$ \\
        $x \in B_2\cap[\frac{9}{10},\frac{10}{10})$ & $H_l(x)=\overline{111000}=\frac{8}{9}$ \\
        \hline
    \end{tabular}
    \caption{$H_l$ for $l=(\frac{3}{10},\frac{2}{10},\frac{2}{10},\frac{3}{10})$}
    \label{tab:example2}
\end{table}

Thus the image $H_l[0,1)$ splits into the two orbits $\{\frac{1}{5},\frac{2}{5},\frac{4}{5},\frac{3}{5} \}$ and \linebreak $\{\frac{1}{9},\frac{2}{9},\frac{4}{9},\frac{8}{9},\frac{7}{9},\frac{5}{9} \}$. However, both orbits may be contained in one single $3$-flower, as shown in Figure \ref{fig:example2}.
\begin{figure}
    \centering
    \includegraphics[width=0.5\linewidth]{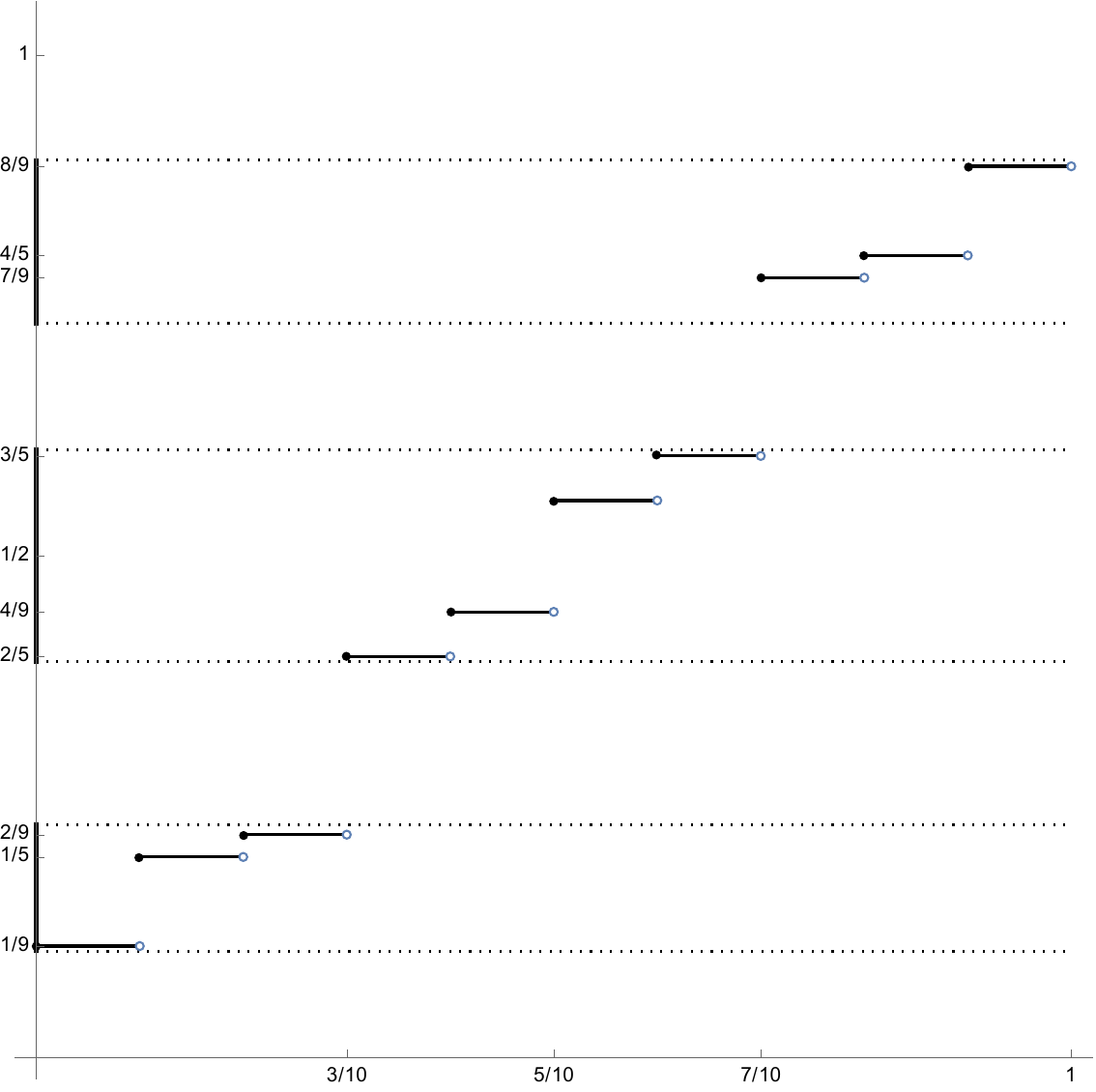}
    \caption{Graph of $H_l$ for $l=(\frac{3}{10},\frac{2}{10},\frac{2}{10},\frac{3}{10})$ and a flower containing the image}
    \label{fig:example2}
\end{figure}

\vspace{2mm}

\noindent \textbf{Example 3}

Let $T_l$ be the deck-shuffler IET with length data $(a,b+\frac{1}{4},b,\frac{1}{4})$ where $a,b \in \R$ are irrational and rationally independent, ie $\frac{a}{b} \notin \Q$. Label $A_{2,1}:= A_2\cap[a,a+b)$ and $A_{2,2}:=A_2\cap [a+b,a+b+\frac{1}{4})$. Observe that, as shown in Figure \ref{fig:example3}, under $T_l$ the intervals have the following behavior:
\begin{align*}
    A_1 &\mapsto [b,a+b) \\
    A_{2,1} &\mapsto B_1\\
    A_{2,2} &\mapsto B_2\\
    B_1 &\mapsto [0,b]\\
    B_2 &\mapsto A_{2,2}\\
\end{align*}

\begin{figure}
    \centering
    \includegraphics[width=0.7\linewidth]{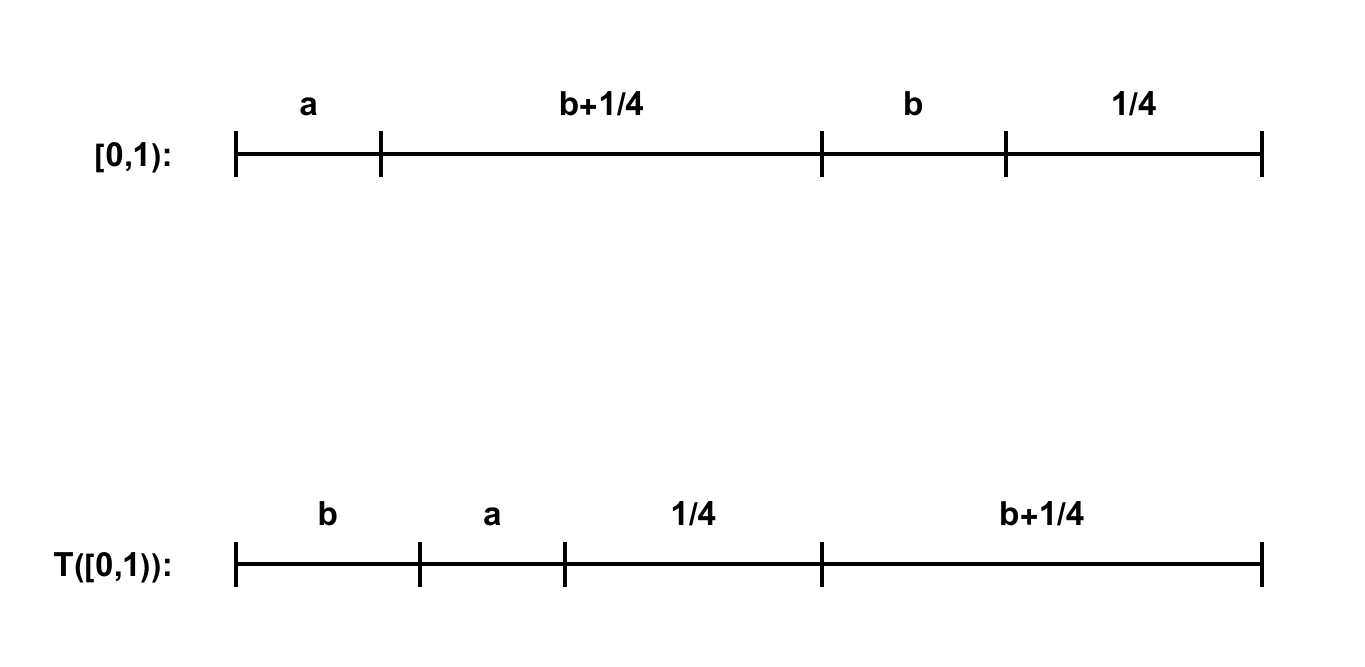}
    \caption{The first iterate of $T_l$ for example 3}
    \label{fig:example3}
\end{figure}

Therefore $A_{2,2}$ and $B_2$ form a periodic component of period 2. Their images under $H_l$ are $\overline{01}=\frac{1}{3}$ and $\overline{10}=\frac{2}{3}$ respectively. Meanwhile, restricted to $A_1\cup A_{2,1}\cup B_1$, the IET $T_l$ acts as an irrational rotation. Indeed, consider the map defined on $A_1, A_{2,1}, B_1$ as
\[
f(x)=\begin{cases}
    2x  \text{ for } x\in A_1\cup A_{2,1}\\
    2x-\frac{1}{4} \text{ for } x \in B_1
\end{cases}
\]

This map $f$ is an invertible map from $A_1 \cup A_{2,1} \cup B_1$ to $[0,1)$. It maps $A_1 \cup A_{2,1}$ to the interval $[0,1-2b)$ and $B_1$ to the interval $[1-2b,1)$. 
The map $f$ intertwines the dynamics of $T|_{A_1 \cup A_{2,1} \cup B_1}$ and the dynamics of rotation by $2b$ on the interval $[0,1)$. Indeed for $x \in A_1$ we have
\[
R_{2b}(f(x))=R_{2b}(2x)=2x+2b \text{ and } f(T_l(x))=f(x+b)=2x+2b
\]
For $x\in A_{2,1}$ we have
\[
R_{2b}(f(x))=R_{2b}(2x)=2x+2b \text{ and } f(T_l(x))=f(x+b+\frac{1}{4})=2x+2b+\frac{1}{2}-\frac{1}{2}=2x+2b
\]
where the second to last equality is because $x+b+\frac{1}{4} \in B_1$. Lastly for $x \in B_1$ we have
\begin{align*}
    &R_{2b}(f(x))=R_{2b}(2x-\frac{1}{2})=2x-\frac{1}{2}+2b-1 \hspace{1mm} \text{ because the rotation is mod 1} \\&\text{and } \hspace{1mm}f(T_l(x))=f(x-a-b-\frac{1}{4})=2x-2a-2b-\frac{1}{2}
\end{align*}
but since $a+2b=\frac{1}{2}$, we have $2x-2a-2b-\frac{1}{2}=2x-a-1=2x-\frac{1}{2}+2b-1$ and so we again get $R_{2b}(f(x))=f(T_l(x))$. This confirms that the following diagram commutes.

\begin{equation}
    \begin{tikzcd}\label{diagramforexample}
    {[0,1)} \arrow[r, "R_{2b}"] & {[0,1)}\\
    {A_1 \cup A_{2,1} \cup B_1}\arrow[u, "f"] \arrow[r, "T_l"] &{A_1 \cup A_{2,1} \cup B_1}\arrow[u, "f"]
\end{tikzcd}
\end{equation}

With this commuting diagram, the $A,B$ coding of a point $x \in A_1 \cup A_{2,1} \cup B_1$ under the action of $T_l$ is exactly the $[0,1-2b),[1-2b,1)$ coding of its image $f(x) \in [0,1)$ under the action of $R_{2b}$. The second coding is known to be the Sturmian coding with rotation number $2b$. As long as $2b$ is irrational, then the coding is an infinite Sturmian.

Therefore, the image $\mathcal{H}_l(A_1 \cup A_{2,1} \cup B_1)$ is an infinite Sturmian subsystem of the shift on two symbols, call it $K$. In other words the image $H_l(A_1 \cup A_{2,1} \cup B_1)$ supports a Sturmian measure with infinite support. By Bullett and Sentenac (\cite{bullet}), the image $H_l(A_1 \cup A_{2,1} \cup B_1)$ is therefore contained in a half circle $[c,c+\frac{1}{2}]$.

Since $H_l(A_{2,2})=\frac{1}{3}$ and $H_l$ is increasing, the endpoint $c$ of the half circle must be less than $\frac{1}{3}$. On the other hand, by Bullett and Sentenac (\cite{bullet}), the half circle can not contain more than one $E_2-$invariant subsystem, so in partiuclar it can not contain both $\frac{1}{3}$ and $\frac{2}{3}$. Thus $c$ must be in $[0,\frac{1}{6})$. 

\begin{figure}
    \centering
    \includegraphics[width=0.5\linewidth]{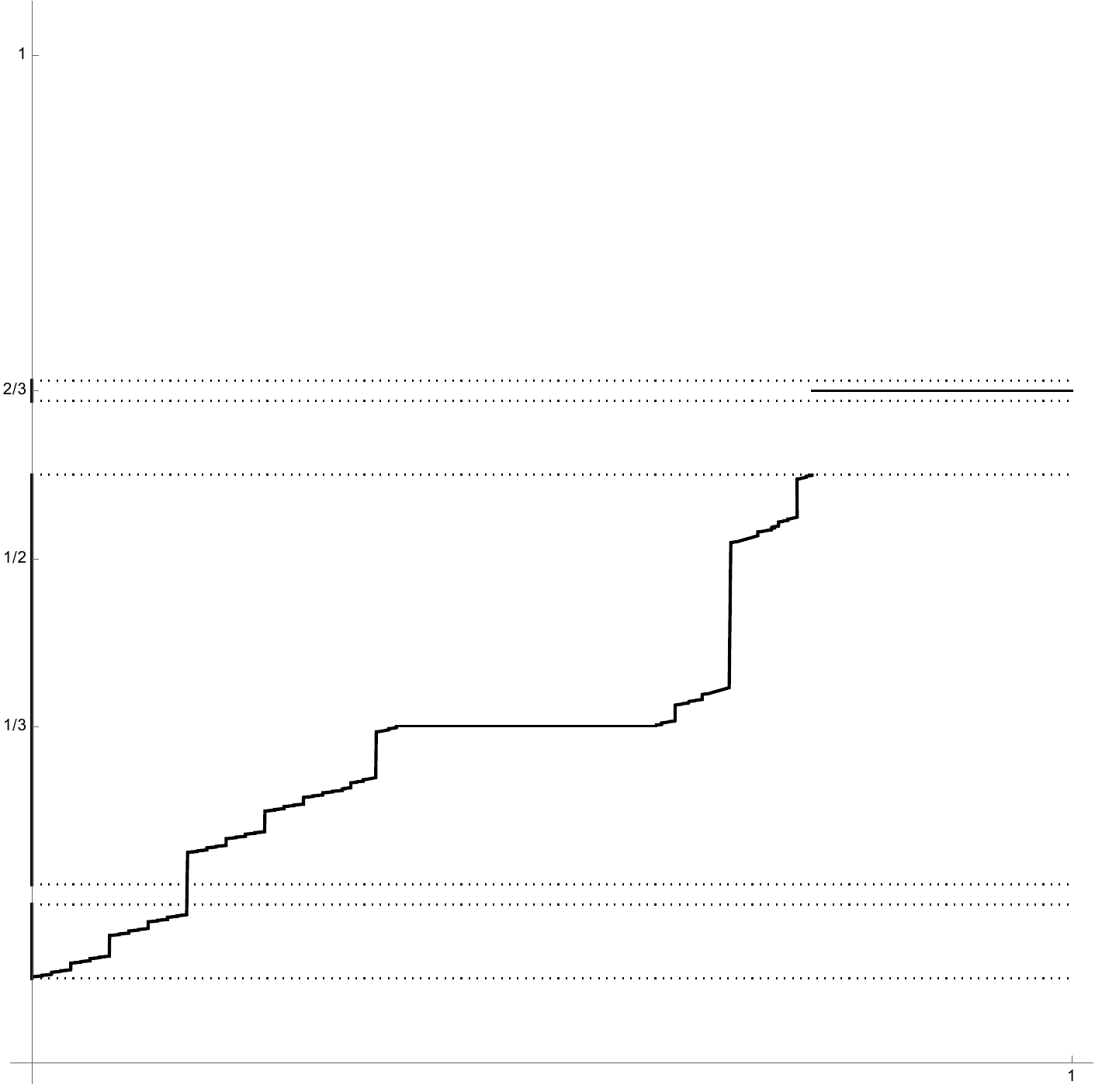}
    \caption{Graph of $H_l$ for $l=(a,b+\frac{1}{4},b,\frac{1}{4})$ where $b \notin \Q$ and a $3-$flower containing the image.}
    \label{fig:example3H}
\end{figure}

Any irrational Sturmian subsystem is closed and does not contain any rational points. Therefore, while $\frac{1}{6}$ is contained in $[c,c+\frac{1}{2}]$, the point $\frac{1}{6}$ is not included in the Sturmian system $K$ itself. Moreover, there exists a small open neighborhood $N=(\frac{1}{6}-\varepsilon,\frac{1}{6}+\varepsilon)$ such that $N\cap K =\emptyset$. Thus a flower $F_l$ containing the image $H_l[0,1)$ is given by $[c,\frac{1}{6}-\varepsilon] \cup [\frac{1}{6}+\varepsilon,c+\frac{1}{2}] \cup [\frac{2}{3}-\varepsilon,\frac{2}{3}+\varepsilon]$. See Figure \ref{fig:example3H} for an example of such an $H_l$ and $F_l$.

\section{Numerical Experiments}\label{sec:ergodicopt}

Recall as introduced in Section \ref{sec:introduction} that ergodic optimization is the study of maximizing measures, that is measures that attain the maximum 
\[
\beta(f)=\sup_{\mu \in \mathcal{M}_S}\int_X f d\mu 
\]
where $S:X \to X$ is a topological dynamical system, $\mathcal{M}_S$ is the set of $S$ invariant measures on $X$, and $f: X \to \R$ is a real valued potential function. A classical example in ergodic optimization was proved by Bousch (\cite{bousch}) for the dynamics of the doubling map on the circle, and the potential functions of degree one trigonometric polynomials. We are interested in extending his results to higher degree trigonometric polynomials.

\begin{defn}
    A degree $n$ trigonometric polynomial is a function of the form $$f(x)=\sum_{k=1}^na _k\cos(k2\pi x)+b_k\sin(k2\pi x).$$
\end{defn}

The $2\pi$ in the argument is to ensure that $f$ is $1$-periodic and can therefore be considered a function on the circle $\T=\R/\Z$. 

We will numerically study maximizing measures of higher degree trigonometric polynomials for the doubling map.
First, note that any trigonometric polynomial is cohomologous with respect to $E_2$ to another trigonometric polynomial containing only trigonometric monomials of odd degree. Thus we will omit all even degree terms in the future. Our conjecture is as follows.

\begin{conjecture}\label{conj}
    Let $n\in \Z_{>0}$ be odd. For the dynamics $E_2:\T\to \T$ and the family of observables $f(x)=a_1\cos(2\pi x)+b_1\sin(2\pi x)+a_3\cos(6\pi x)+b_3\sin(6\pi x)+...+a_n\cos(2n\pi x)+b_n\sin(2n\pi x)$, all maximizing measures are supported in a flower with at most $n$ petals.
\end{conjecture}

If a measure has support equal to a periodic orbit, we may numerically determine whether its support is contained in a flower by computing what we will call its \textit{interlacing number} as follows. First define the \emph{antiorbit} to be the set of antipodal points to the periodic orbit. Then, count how many times there is a switch between a member of the orbit and a member of the antiorbit as you traverse the circle once. Note that the interlacing number of a periodic orbit is always an odd number, and that it is exactly the number of petals of a flower that contains the orbit. 

As an example, the simplest periodic orbit with interlacing number 3 is the period $4$ orbit $\{\frac{1}{5}, \frac{2}{5},\frac{4}{5},\frac{3}{5} \}$, which is represented by the Lyndon word ``$0011$''. By simplest, we mean shortest period and first lexicographically. An image of this orbit together with its antiorbit to indicate its interlacing number is shown in Figure \ref{fig:0011}. Similarly, the simplest periodic orbit with interlacing number 5 is given by Lyndon word ``$0001101$'' (shown in Figure \ref{fig:0001101}). We calculate the interlacing numbers for each of the 2,538 candidate orbits of period up to 14, and the tallies are shown in table \ref{table:allilns}, along with the simplest orbit achieving that interlacing number.

\begin{table}
    \centering
\begin{tabular}{ |c|c|c| }
\hline
Interlacing Number & Tally & Simplest Orbit \\
 \hline
 1 & 65 & 0\\ 
 3 & 470  & 0011 \\ 
 5 & 1,006 &  0001101\\ 
 7 & 742 & 000100111 \\ 
 9 & 227 & 0001011101\\ 
 11 & 28 & 000100111011\\ 
 \hline
\end{tabular}
    \caption{Tally of interlacing numbers for all periodic orbits up to period 14.}
    \label{table:allilns}
\end{table}

\begin{figure}[htbp]
    \centering
    \begin{subfigure}{0.45\textwidth}
        \centering
        \includegraphics[width=\linewidth]{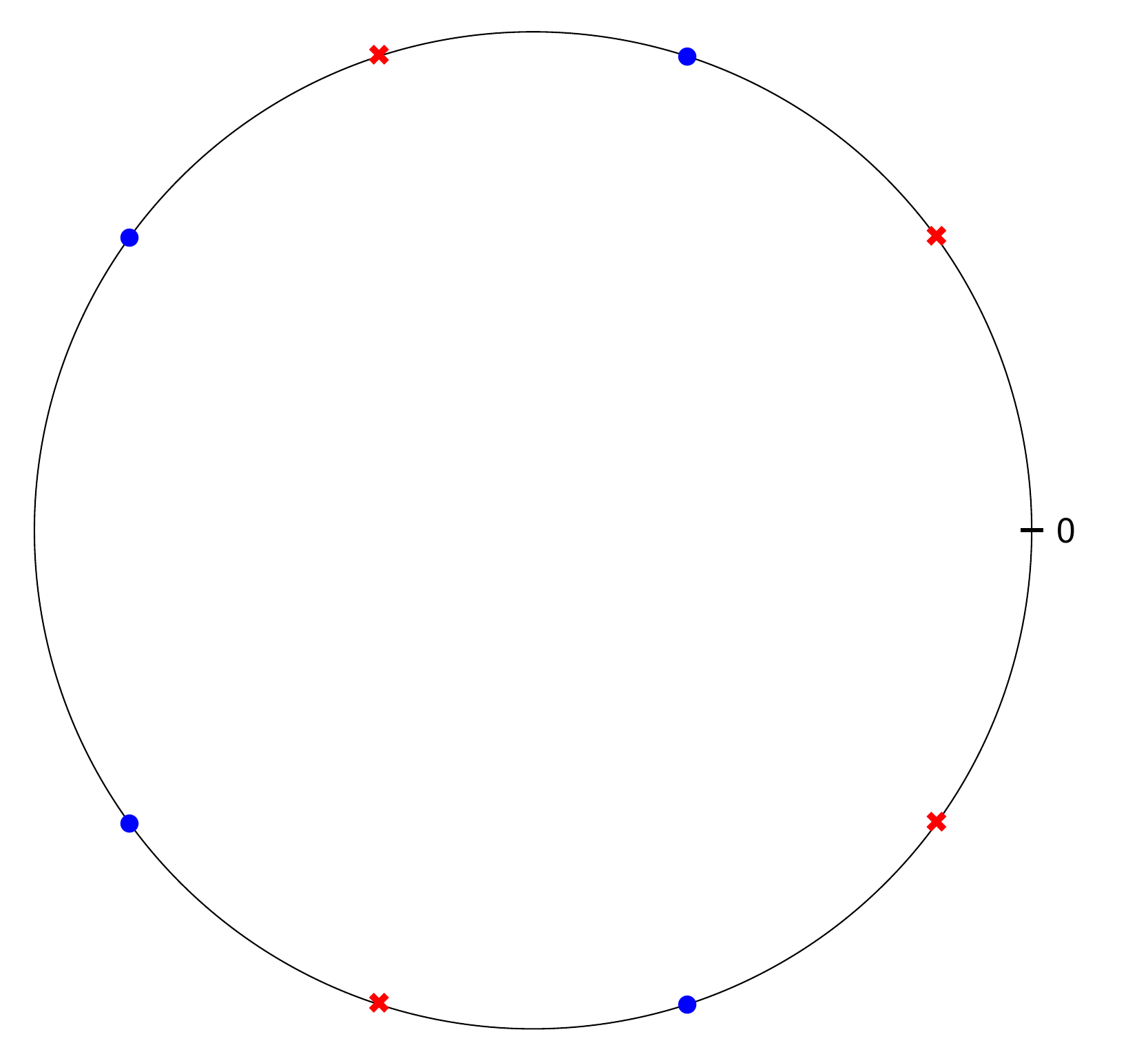}
        \caption{Orbit (blue dots) and antiorbit (red x's) of 0011. Interlacing number is 3.}
        \label{fig:0011}
    \end{subfigure}
    \hfill
    \begin{subfigure}{0.45\textwidth}
        \centering
        \includegraphics[width=\linewidth]{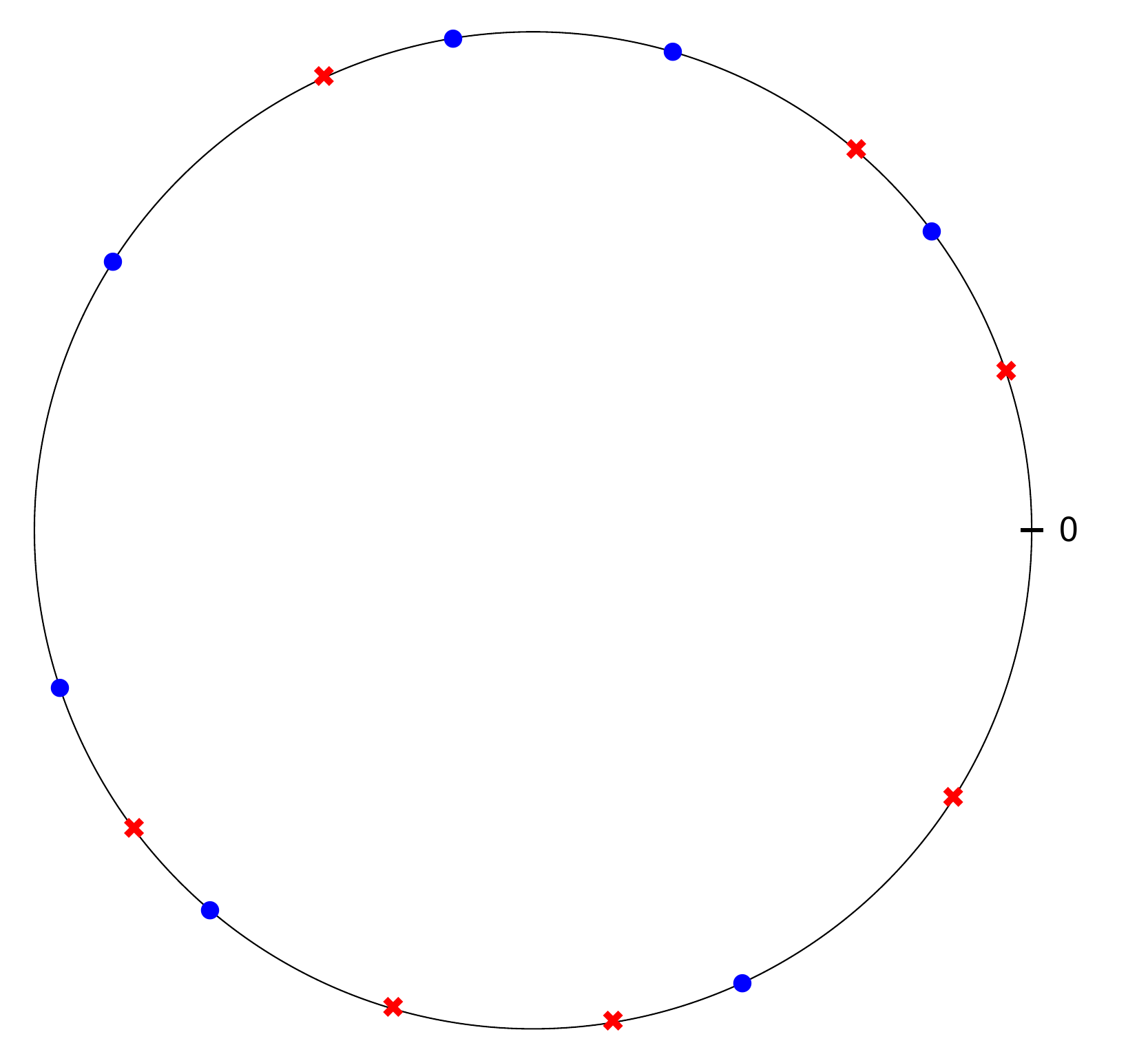}
        \caption{Orbit (blue dots) and antiorbit in (red x's) of 0001101. Interlacing number is 5.}
        \label{fig:0001101}
    \end{subfigure}

    \caption{Visualization of interlacing number}
\end{figure}

To support Conjecture \ref{conj}, we performed a numerical experiment that approximated maximizing measures of degree three trigonometric polynomials and determined whether they were supported in a flower by calculating interlacing numbers. The experiment proceeds as follows. The code produces a randomly generated tuple of coefficients $(a_1,b_1,a_3,b_3)$, selected uniformly from the sphere $S^3$. We then integrate the function \[f(x) = a_1\cos(2\pi x) + b_1\sin(2\pi x)+a_3\cos(6\pi x)+b_3\sin(2\pi x)\]
against every $E_2$-periodic point up to period 14, of which there are 2,538. Out of the resulting list of integrals, the maximum is chosen.

It is generally expected that maximizing measures are supported on periodic orbits and moreover that the orbits have low period. See for example the early experiments of Hunt and Ott in \cite{huntott} showing low period, and the recent work \cite{gaoshen2} showing typical periodicity. It is also generally expected that maximizing measures are unique, see for example \cite{morris}. Therefore, we assume that the maximum obtained from integrating against the list of periodic points up to period 14 is a likely candidate for the maximum, and the corresponding periodic point gets labeled as the support of the \emph{pseudo maximizing measure}.

In our experiment, out of 50,000 randomly generated degree 3 trig polynomials, the pseudo maximizing measures that appeared all had interlacing number 3 or lower. Similarly, out of ten thousand randomly generated degree 5 trig polynomials, the maximizing measures all had interlacing number 5 or lower. See for example the tallies in table \ref{table:iln3s}. 

\begin{table}
    \centering
\begin{tabular}{ |c|c| }
\hline
Interlacing Number & Tally \\
 \hline
 1 &  39,105\\ 
 3 & 10,895   \\ 
 5 & 0 \\ 
 7 & 0 \\ 
 9 & 0 \\ 
 11 & 0 \\ 
 \hline
\end{tabular}
    \caption{Tally of interlacing numbers for orbits that show up as maximizing measures of degree 3 trigonometric polynomials.}
    \label{table:iln3s}
\end{table}

In contrast, we know that the periodic orbits of length up to 14 that were used as candidates have interlacing number up to 11. Since periodic optimization is typical, if Conjecture \ref{conj} were false, it is likely that there would be a periodic counter example of low period. In that case, from a very large sample size of trigonometric polynomials, it is likely that a counterexample would have appeared from our experiment. Since no counterexample appeared, we have strong support for Conjecture \ref{conj}. The code and example results can be found at \url{https://github.com/mbrown60/interlacing_numbers}.
\newpage
\newpage

\printbibliography
\end{document}